\newtheorem{Lemma}{Lemma}
\newtheorem{Theorem}{Theorem}
\newtheorem{Corollary}{Corollary}
\newtheorem{Proposition}{Proposition}
\newtheorem{Definition}{Definition}
\newtheorem{Example}{Example}
\theoremstyle{definition}
\newtheorem{Remark}{Remark}
\numberwithin{equation}{section}
\definecolor{DarkGreen}{rgb}{0,0.5,0.1}
\newcommand\soutD{\bgroup\markoverwith
{\textcolor{DarkGreen}{\rule[.5ex]{2pt}{1pt}}}\ULon}
\newcommand\soutP{\bgroup\markoverwith
{\textcolor{blue}{\rule[.5ex]{2pt}{1pt}}}\ULon}
\newcommand{\Hm}[1]{\leavevmode{\marginpar{\tiny%
$\hbox to 0mm{\hspace*{-0.5mm}$\leftarrow$\hss}%
\vcenter{\vrule depth 0.1mm height 0.1mm width \the\marginparwidth}%
\hbox to
0mm{\hss$\rightarrow$\hspace*{-0.5mm}}$\\\relax\raggedright #1}}}
\begin{document}
%
%-------%
% TITLE %
% TITLE %
%-------%
%------------------------------------------%
%------------------------------------------%
\title{Uncertainty principle via variational calculus on modulation spaces}

\author[Dias, Luef and Prata]{Nuno C. Dias, Franz Luef and Jo\~{a}o N. Prata}

\address{N.C. Dias - Grupo de F\'{\i}sica Matem\'{a}tica, Faculdade de Ci\^{e}ncias da Universidade de Lisboa, Campo Grande, Edif\'{\i}cio C6
1749-016 Lisboa, Portugal, and Escola Superior N\'autica Infante D. Henrique, Av. Engenheiro Bonneville Franco, 2770-058 Pa\c{c}o de Arcos, Portugal} \email{ncdias@meo.pt}

\address{F. Luef - Department of Mathematical Sciences, NTNU Trondheim, 7041 Trondheim, Norway}
\email{franz.luef@ntnu.no}

\address{J.N. Prata - Grupo de F\'{\i}sica Matem\'{a}tica, Faculdade de Ci\^{e}ncias da Universidade de Lisboa, Campo Grande, Edif\'{\i}cio C6
1749-016 Lisboa, Portugal, and Escola Superior N\'autica Infante D. Henrique, Av. Engenheiro Bonneville Franco, 2770-058 Pa\c{c}o de Arcos, Portugal}
\email{joao.prata@mail.telepac.pt}

\date{\today}
%\date{May 26, 2015}

\begin{abstract}
We approach uncertainty principles of Cowling-Price-Heis-\\enberg-type as a variational principle on modulation
spaces. In our discussion we are naturally led to compact localization operators with symbols in
modulation spaces. The optimal constant in these uncertainty principles is the smallest
eigenvalue of the inverse of a compact localization operator. The Euler-Lagrange equations for
the associated functional provide equations for the eigenfunctions of the smallest eigenvalue
of these compact localization operators. As a by-product of our proofs we derive a generalization to mixed-norm spaces
of an inequality for Wigner and Ambiguity functions due do Lieb.
\end{abstract}

\maketitle

\section{Introduction}

A cornerstone of Fourier analysis is the uncertainty principle about the time and frequency localization of a function. Loosely speaking it states that a function $f(x)$ and its Fourier transform $\widehat{f} (\omega)$ cannot both be sharply localized. One of the first quantitative versions of the uncertainty principle to be put forward was the Heisenberg-Pauli-Weyl inequality:
\begin{equation}
\left(\int_{\mathbb{R}} (x-x_0)^2 |f(x)|^2 dx \right)^{1/2} \left(\int_{\mathbb{R}} (\omega-\omega_0)^2 |\widehat{f}(\omega)|^2 d\omega \right)^{1/2} \geq \frac{\|f\|_{L^2 (\mathbb{R})}^2}{4 \pi},
\label{eqIntro1}
\end{equation}
for any $x_0,\omega_0 \in \mathbb{R}$. In what follows, we will set $x_0= \omega_0=0$, as this can be easily achieved by a phase-space translation, which does not alter the results.

This inequality can be shown to be equivalent to \cite{Folland}:
\begin{equation}
\|xf\|_{L^2(\mathbb{R})}^2 + \|\omega \widehat{f} \|_{L^2(\mathbb{R})}^2 \geq \frac{\|f\|_{L^2(\mathbb{R})}^2}{2 \pi}.
\label{eqIntro2}
\end{equation}
The constant is sharp and we obtain an equality if and only if $f$ is a generalized Gaussian \cite{Grochenig}.

Cowling and Price \cite{Cowling} have generalized (\ref{eqIntro2}) by considering other norms and other weights. Here we wish to consider the following result \cite{Cowling,Folland}.
\begin{Theorem}[Cowling-Price]\label{TheoremCowlingPrice}
Suppose $p, q \in \left[1, \infty\right]$ and $a,b>0$. There is a constant $K>0$ such that:
\begin{equation}
\|~|x|^a f\|_{L^p(\mathbb{R})} + \|~|\omega|^b \widehat{f} \|_{L^q(\mathbb{R})} \geq K \|f\|_{L^2(\mathbb{R})} ,
\label{eqIntro3}
\end{equation}
for all tempered distributions $f$ and $\widehat{f}$, which are  represented by locally integrable functions, if and only if:
\begin{equation}
a>\frac{1}{2} - \frac{1}{p} \hspace{0.5 cm} \text{ and } \hspace{0.5 cm}  b>\frac{1}{2} - \frac{1}{q} ~ .
\label{eqIntro4}
\end{equation}
\end{Theorem}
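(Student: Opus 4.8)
\emph{Proof proposal.} I would prove the two directions of the equivalence separately: the necessity of (\ref{eqIntro4}) by testing (\ref{eqIntro3}) against suitable families of functions, and the inequality (\ref{eqIntro3}) by reducing it to a scale-invariant multiplicative estimate that is settled first at $p=q=2$ and then propagated to the other exponents.

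\emph{Necessity.} This is pure bookkeeping of scaling exponents. Against a dilated and modulated Gaussian $\phi_{\lambda,\mu}(x)=e^{2\pi i\mu x}\phi(x/\lambda)$ one has $\|\,|x|^a\phi_{\lambda,\mu}\|_{L^p}\asymp\lambda^{a+1/p}$, $\|\phi_{\lambda,\mu}\|_{L^2}\asymp\lambda^{1/2}$ and $\|\,|\omega|^b\widehat{\phi_{\lambda,\mu}}\|_{L^q}\lesssim\lambda^{1-1/q}\max(\mu,\lambda^{-1})^{b}$, so (\ref{eqIntro3}), divided by $\|\phi_{\lambda,\mu}\|_{L^2}$, forces
\[
\lambda^{a+1/p-1/2}+\lambda^{1/2-1/q}\max(\mu,\lambda^{-1})^{b}\ \gtrsim\ 1\qquad(\lambda,\mu>0),
\]
which fails, on letting $\lambda\to\infty$ with $\mu=0$, as soon as $a<\tfrac12-\tfrac1p$ and $b>\tfrac12-\tfrac1q$. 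The regime where both inequalities in (\ref{eqIntro4}) fail strictly is handled by a Gabor-type superposition $N^{-1/2}\sum_{k<N}\phi(\cdot-ck)\,e^{2\pi i ck(\cdot)}$ strung along a diagonal in phase space, whose two weighted norms scale like $N^{a+1/p-1/2}$ and $N^{b+1/q-1/2}$ while $\|\cdot\|_{L^2}\asymp1$; and the endpoint cases are handled by a truncated power $|x|^{-1/2}\chi_{\{1\le|x|\le N\}}$ (or its Fourier-side analogue $\mathcal{F}^{-1}(|\omega|^{-1/2}\chi_{\{1\le|\omega|\le N\}})$), for which one of the two weighted norms grows like $(\log N)^{1/p}$ or $(\log N)^{1/q}$, the other stays bounded, and $\|f\|_{L^2}\asymp(\log N)^{1/2}$ — here $p>2$ (resp.\ $q>2$) is automatic at an endpoint, so the ratio still tends to $0$.

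\emph{The inequality.} After a routine approximation it suffices to treat $f$ with $f,\widehat f\in\mathcal S$. Set $u=a+\tfrac1p-\tfrac12>0$, $v=b+\tfrac1q-\tfrac12>0$; by the weighted arithmetic--geometric mean inequality, (\ref{eqIntro3}) follows from the scale-invariant estimate
\[
\|f\|_{L^2}\ \le\ C\,\|\,|x|^a f\|_{L^p}^{\,v/(u+v)}\;\|\,|\omega|^b\widehat f\|_{L^q}^{\,u/(u+v)},
\]
whose exponents are precisely those making the right-hand side transform like $\|f\|_{L^2}$ under dilation. At $p=q=2$ (so $u=a$, $v=b$) this is the power-weight form of the classical $L^2$ uncertainty inequality (\ref{eqIntro1})--(\ref{eqIntro2}), which I would prove by a truncation argument: with $P_R$ the cut-off to $\{|x|\le R\}$ and $Q_S$ the Fourier cut-off to $\{|\omega|\le S\}$, the operator $P_RQ_S$ is Hilbert--Schmidt with $\|P_RQ_S\|_{\mathrm{op}}\le\|P_RQ_S\|_{\mathrm{HS}}\asymp\sqrt{RS}$, while $\|(I-P_R)f\|_{L^2}\le R^{-a}\|\,|x|^a f\|_{L^2}$ and $\|(I-Q_S)f\|_{L^2}\le S^{-b}\|\,|\omega|^b\widehat f\|_{L^2}$; decomposing $f=(I-P_R)f+(I-Q_S)P_Rf+Q_SP_Rf$ yields
\[
\|f\|_{L^2}\ \lesssim\ \sqrt{RS}\,\|f\|_{L^2}+R^{-a}\|\,|x|^a f\|_{L^2}+S^{-b}\|\,|\omega|^b\widehat f\|_{L^2},
\]
and one absorbs the first term by taking $RS$ small, then optimises $R$ with $S\asymp R^{-1}$. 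For $2\le p,q\le\infty$ the same scheme works once the two tail bounds are strengthened: splitting $\{|x|>R\}$ into dyadic annuli $A_j=\{2^j\le|x|<2^{j+1}\}$ and using H\"older on each, $\|\,|x|^a f\|_{L^2(A_j)}\lesssim 2^{j(1/2-1/p)}\|\,|x|^a f\|_{L^p(A_j)}$, hence $\|(I-P_R)f\|_{L^2}\lesssim R^{-u}\|\,|x|^a f\|_{L^p}$ — the geometric series converging exactly because $u>0$ — and symmetrically $\|(I-Q_S)f\|_{L^2}\lesssim S^{-v}\|\,|\omega|^b\widehat f\|_{L^q}$.

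\emph{The general exponents and the main difficulty.} The ranges $p<2$ and/or $q<2$ I would obtain by complex interpolation, viewing $\{f:\,|x|^a f\in L^p\}=L^p(|x|^{ap}\,dx)$ and its Fourier-side twin as intermediate spaces, via the Stein--Weiss theorem, between $L^2$ with a power weight and an endpoint covered by the previous step. This interpolation is the crux: the estimate links $f$ to $\widehat f$, so there is no single linear operator to interpolate, and one must run an analytic family in the spirit of Stein's interpolation theorem in which the function itself is deformed while its Fourier transform is kept under control. (It is exactly this step that the variational/modulation-space approach of the present paper bypasses, by identifying the optimal constant directly and thereby obtaining (\ref{eqIntro3}) without interpolation.)
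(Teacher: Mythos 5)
The paper does not actually prove Theorem \ref{TheoremCowlingPrice}: it is imported verbatim from Cowling--Price \cite{Cowling} (see also \cite{Folland}) as background, and the paper's own machinery recovers only a fragment of it (Theorem \ref{TheoremGalperinGrocheing} with $r=s=2$, $\alpha=\beta=0$ needs $p,q\ge 2$, and Theorem \ref{TheoremMinimum} with $r=s=2$ even forces $p=q=2$), so your proposal must be judged as a from-scratch proof. Much of it is sound. The necessity direction by scaling tests is the standard argument, though as written it has small repairable holes: the sharp cutoff in $|x|^{-1/2}\chi_{\{1\le|x|\le N\}}$ makes $\widehat f$ decay only like $|\omega|^{-1}$, so $\|\,|\omega|^b \widehat f\|_{L^q}$ is actually infinite when $b\ge 1-\tfrac1q$ (you need to smooth the truncation), and the mixed case ``$a<\tfrac12-\tfrac1p$ strictly while $b=\tfrac12-\tfrac1q$ exactly'' is covered by neither the pure dilation nor the Gabor superposition as stated. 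The sufficiency argument for $2\le p,q\le\infty$ --- reduction to the multiplicative form, the Hilbert--Schmidt bound on $P_RQ_S$, and the dyadic-annuli H\"older upgrade of the tail estimates --- is correct and complete, and is a perfectly good elementary route in that range.

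The genuine gap is the range $p<2$ or $q<2$, which the theorem does cover (there $a>\tfrac12-\tfrac1p$ is automatic, but the inequality still has to be proved). Your tail bound $\|(I-P_R)f\|_{L^2}\lesssim R^{-u}\|\,|x|^a f\|_{L^p}$ is genuinely false for $p<2$: a tall spike $\varepsilon^{-1/2}\chi_{[R,R+\varepsilon]}$ has unit $L^2$ norm while $\|\,|x|^a f\|_{L^p}\asymp R^a\varepsilon^{1/p-1/2}\to 0$, so no estimate of $\|f\|_{L^2(|x|>R)}$ by the weighted $L^p$ norm alone can hold, and some input from the frequency term is indispensable on the tail as well. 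The complex-interpolation scheme you offer for this range is not an argument: you yourself observe that there is no linear operator to interpolate and you do not exhibit the analytic family, and it is far from clear one exists, since any deformation $f\mapsto f_z$ that linearises the weighted $L^p$ norm destroys control of $\widehat{f_z}$. The way this range is handled in \cite{Cowling} is elementary rather than interpolative: one converts the hypothesis on $\widehat f$ into control of $\|f\|_{L^{p'}}$ (Hausdorff--Young after splitting $\widehat f$ at a frequency scale $S$) and then bounds $\int_{|x|>R}|f|^2$ by a three-factor H\"older inequality pairing $\|\,|x|^a f\|_{L^p}$, $\|f\|_{L^{p'}}$ and an integrable negative power of $|x|$. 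Until that step, or a genuine substitute, is supplied, your ``if'' direction is established only for $p,q\ge 2$.
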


In the present work we intend to extend the Cowling-Price uncertainty principle in the following sense. We shall prove, in arbitrary dimension $d\geq 1$, that inequalities of the form
\begin{equation}
\|~|x|^a f\|_{L^p(\mathbb{R}^d)} + \|~|\omega|^b \widehat{f} \|_{L^q(\mathbb{R}^d)} \geq K \|f\|_{M_{\alpha , \beta}^{r,s}(\mathbb{R}^d)} ~,
\label{eqIntro5}
\end{equation}
hold for all $f \in M_{\alpha , \beta}^{r,s}(\mathbb{R}^d)$ and where the left-hand side may be infinite. Here $M_{\alpha , \beta}^{r,s}(\mathbb{R}^d)$ is a certain modulation space to be specified below. Moreover, we shall prove that there exist functions $f_0$ for which we obtain an equality and which are the solutions of certain nonlinear functional equations. This latter result is missing in Cowling and Price's original work.

In addition, we shall also prove that, for certain weight functions $\psi, \phi$, we have an inequality of the form:
\begin{equation}
\|\psi f\|_{L^2(\mathbb{R}^d)}^2 + \|\phi \widehat{f} \|_{L^2(\mathbb{R}^d)}^2 \geq K \|f\|_{M_{m_0}^2(\mathbb{R}^d)} ~,
\label{eqIntro6}
\end{equation}
where $M_{m_0}^2(\mathbb{R}^d)$ is another modulation space.

Inequalities (\ref{eqIntro5},\ref{eqIntro6}) can be regarded as generalizations of the Heisenberg-Pauli-Weyl inequality (\ref{eqIntro2}). Moreover, we shall show that we obtain an equality in (\ref{eqIntro6}) for functions $f_0$ which are eigenfunctions of a certain positive compact localization operator and that the optimal constant is related with its eigenvalues.

One ingredient of our treatment is the work of Galperin and Gr\"ochenig \cite{Galperin,GalperinGrochenig,Grochenig1}, where Heisenberg-type inequalities are interpreted as embeddings of weighted Lebesgue spaces $L^p_a(\mathbb{R}^d)$ and weighted Fourier images $\mathcal{F}L^q_b(\mathbb{R}^d)$ of weighted $L^q_b(\mathbb{R}^d)$ spaces into modulation spaces $M^{r,s}_{\alpha,\beta}(\mathbb{R}^d)$.

We denote by $L^p_a(\mathbb{R}^d)$ the space of distributions $f$ in the Lebesgue space $L^p(\mathbb{R}^d)$ with finite $\|.\|_{L^p_a (\mathbb{R}^d)}$, where $\|f\|_{L^p_a (\mathbb{R}^d)}=\big(\int_{\mathbb{R}^d}|f(x)|^p(1+|x|)^{ap}dx\big)^{1/p}$.

The space $\mathcal{F}L^q_b(\mathbb{R}^d)$ consists of all tempered distributions $f\in\mathcal{S}^\prime(\mathbb{R}^d)$ such that $\widehat{f}$ is in $L^q_b(\mathbb{R}^d)$ with finite norm $\|f\|_{\mathcal{F}L^q_b (\mathbb{R}^d)}=\|\widehat{f}\|_{L^q_b (\mathbb{R}^d)}$.

Modulation spaces were introduced by Feichtinger in $1983$ \cite{Feichtinger1,Feichtinger2,Feichtinger3}. Here we restrict the discussion to the following classes of modulation spaces:
$M^{r,s}_{\alpha,\beta}(\mathbb{R}^d)$ and $M^2_m(\mathbb{R}^d)$, to be defined below. Loosely speaking, modulation spaces
allow one to encode the decay and integrability properties of a function on the phase-space.

There are various ways to measure the phase-space content of a function,
e.g. Wigner transform, Rihazcek transform \cite{Cohen}. We use the matrix coefficients of the Schr\"odinger representation $\pi$ of of the Heisenberg group $\mathbb{H} (d)$, $\pi(x,\omega)g(t)=e^{2\pi it \cdot \omega}g(t-x)$:
\begin{equation}
  V_gf(x,\omega)=\left( f,\pi(x,\omega)g \right)_{L^2}=\int_{\mathbb{R}^d} f(t)\overline{g(t-x)}e^{-2\pi it \cdot \omega}dt,
\label{eqSTFT}
\end{equation}
for $f,g$ in $L^2(\mathbb{R}^d)$. In signal analysis $V_gf$ is known as the short-time-Fourier transform. If $g \in \mathcal{S} (\mathbb{R}^d)$, then the short-time Fourier transform extends to tempered distributions $f \in \mathcal{S}^{\prime} (\mathbb{R}^d)$.

For $\alpha,\beta \in \mathbb{R} $ and $r,s\in[1,2]$ we define the modulation space $M^{r,s}_{\alpha,\beta}(\mathbb{R}^d)$ as the space of all functions
such that
\begin{equation}
  \|f\|_{M^{r,s}_{\alpha,\beta} (\mathbb{R}^d)}=\Big(\int_{\mathbb{R}^d} \Big(\int_{\mathbb{R}^d} |V_gf(x,\omega)|^r(1+|x|)^{\alpha r}dx\Big)^{s/r}(1+|\omega|)^{\beta s}d\omega\Big)^{1/s}
\label{eqModulationSpace1}
\end{equation}
is finite with respect to a fixed non-zero $g$ in $\mathcal{S}(\mathbb{R}^d)$. Note that different choices of the non-zero window $g\in\mathcal{S}(\mathbb{R}^d)$ lead to equivalent norms for $M^{r,s}_{\alpha,\beta}(\mathbb{R}^d)$. If we want to emphasize the choice of window $g$, we may sometimes write $ \| \cdot \|_{g, M^{r,s}_{\alpha,\beta} (\mathbb{R}^d)}$.

We shall also consider the modulation spaces with norm
\begin{equation}
  \|f\|_{M^2_m (\mathbb{R}^d)}=\Big(\int_{\mathbb{R}^d} \int_{\mathbb{R}^d} |m (x,\omega) V_gf(x,\omega)|^2 dx d \omega\Big)^{1/2},
\label{eqModulationSpace1.1}
\end{equation}
where $m (x, \omega)$ is an appropriate time-frequency weight (see e.g.\cite{Aceska}).

We need the following particular case of the results in \cite{Galperin,GalperinGrochenig}:
\begin{Theorem}[Galperin-Gr\"ochenig]\label{TheoremGalperinGrocheing}	
  Let $\alpha,\beta\ge 0$, $0<r,s\le 2$ and $1 \le p,q \le \infty$. Suppose that $r\le p$ and $s\le q$.
If
	\begin{equation}
\begin{array}{c}
	  \left(\frac{a-\alpha}{d} +\frac{1}{p}-\frac{1}{r} \right) \left(\frac{b-\beta}{d} +\frac{1}{q}-\frac{1}{s} \right)> \\
\\
\mbox{max} \left\{\left(\frac{1}{r} - \frac{1}{q^{\prime}} + \frac{\alpha}{d} \right), \left(\frac{1}{r} - \frac{1}{2} + \frac{\alpha}{d} \right) \right\} \times  \mbox{max} \left\{\left(\frac{1}{s} - \frac{1}{p^{\prime}} + \frac{\beta}{d} \right), \left(\frac{1}{s} - \frac{1}{2} + \frac{\beta}{d} \right) \right\}
	\end{array}
\label{eqGalperinGrochenig1}
\end{equation}
	with all factors positive, then $L^p_a(\mathbb{R}^d)\cap\mathcal{F}L^q_b(\mathbb{R}^d)$ is compactly embedded in $M^{r,s}_{\alpha,\beta}(\mathbb{R}^d)$. Thus, there exists $C>0$ such that:
	\begin{equation}
	  \|f\|_{L^p_a (\mathbb{R}^d)}+\|\widehat{f}\|_{L^q_b (\mathbb{R}^d)}\ge C\|f\|_{M^{r,s}_{\alpha,\beta}(\mathbb{R}^d)}.
	\label{eqGalperinGrochenig2}
\end{equation}
\end{Theorem}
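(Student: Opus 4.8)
The plan is to reduce the assertion to two things: a family of bounds on the short-time Fourier transform $V_gf$ in terms of $\|f\|_{L^p_a(\mathbb{R}^d)}$ and $\|\widehat f\|_{L^q_b(\mathbb{R}^d)}$, and the classical criterion that a bounded subset of the weighted mixed-norm space underlying \eqref{eqModulationSpace1} is precompact once it is locally precompact and has uniformly small tails. Fix once and for all a window $g\in\mathcal{S}(\mathbb{R}^d)$; this is harmless since different windows give equivalent (quasi-)norms. I would first record that, writing $V_gf(x,\omega)=\widehat{f\,\overline{g(\cdot-x)}}(\omega)$ and using that multiplication by the Schwartz function $\overline{g(\cdot-x)}$ essentially localizes $f$ to $|t|\gtrsim|x|$, one gets (a) the crude bound $|V_gf(x,\omega)|\le\|f\,\overline{g(\cdot-x)}\|_{L^1}\lesssim(1+|x|)^{-a}\|f\|_{L^p_a}$, valid for all $1\le p\le\infty$, and more generally, exploiting the extra integrability the window confers and then Hausdorff--Young, (b) $\|V_gf(x,\cdot)\|_{L^\rho_\omega}\lesssim(1+|x|)^{-a}\|f\|_{L^p_a}$ for every $\rho\ge\max(2,p')$. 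Via the covariance identity $|V_gf(x,\omega)|=|V_{\widehat g}\widehat f(\omega,-x)|$ the same reasoning applied to $\widehat f$ gives the dual bounds with gain $(1+|\omega|)^{-b}\|\widehat f\|_{L^q_b}$ and exponent $\sigma\ge\max(2,q')$, and one also has the elementary convolution bounds $\|V_gf(\cdot,\omega)\|_{L^p_x}\lesssim\|f\|_{L^p_a}$ and $\|V_gf(x,\cdot)\|_{L^q_\omega}\lesssim\|\widehat f\|_{L^q_b}$ that are uniform in the other variable. The freedom in the choice of $\rho$ and $\sigma$ — the split between the Hausdorff--Young exponent and the Plancherel exponent $2$ — is exactly what will reproduce the two maxima on the right-hand side of \eqref{eqGalperinGrochenig1}.

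The continuous embedding then follows by inserting these bounds into \eqref{eqModulationSpace1} and interpolating, using Hölder's inequality in $x$ and in $\omega$ to pair the gains $(1+|x|)^{-a}$, $(1+|\omega|)^{-b}$ against the polynomial weights $(1+|x|)^{\alpha r}$, $(1+|\omega|)^{\beta s}$, with the hypotheses $r\le p$ and $s\le q$ allowing the interpolation to bring the Lebesgue exponents in the two variables down to $r$ and $s$. This yields
\[
\Big(\int_{\mathbb{R}^d}\Big(\int_{\mathbb{R}^d}|V_gf(x,\omega)|^r(1+|x|)^{\alpha r}\,dx\Big)^{s/r}(1+|\omega|)^{\beta s}\,d\omega\Big)^{1/s}\le C\big(\|f\|_{L^p_a(\mathbb{R}^d)}+\|\widehat f\|_{L^q_b(\mathbb{R}^d)}\big),
\]
the precise condition for the two iterated integrals to converge being exactly \eqref{eqGalperinGrochenig1}; because all the factors there are required to be strictly positive, the gains in fact dominate the weights with room to spare. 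This already gives \eqref{eqGalperinGrochenig2}.

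To upgrade boundedness to compactness, let $(f_n)$ be bounded in $L^p_a(\mathbb{R}^d)\cap\mathcal{F}L^q_b(\mathbb{R}^d)$. Bound (a) above, applied also with $g$ replaced by its partial derivatives and by $t\mapsto t_jg(t)$ (all still in $\mathcal{S}(\mathbb{R}^d)$), shows that $\{V_gf_n\}$ is uniformly bounded on all of $\mathbb{R}^{2d}$ and equicontinuous on every ball $B_R$; by Arzel\`a--Ascoli and a diagonal argument there is a subsequence along which $V_gf_n$ converges locally uniformly, hence in the weighted $L^{r,s}$-norm over each $B_R$. On the other hand, the room to spare in \eqref{eqGalperinGrochenig1} makes the same estimate, restricted to $\{|x|>R\}\cup\{|\omega|>R\}$, tend to $0$ as $R\to\infty$ uniformly in $n$. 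Splitting the $M^{r,s}_{\alpha,\beta}$-(quasi-)norm of $V_g(f_n-f_m)$ into its contributions over $B_R$ and its complement — using the $r$- and $s$-quasi-triangle inequalities — one finds $(V_gf_n)$ Cauchy in the weighted mixed-norm space; since by definition $\|\cdot\|_{M^{r,s}_{\alpha,\beta}}$ is that norm of the STFT and $M^{r,s}_{\alpha,\beta}(\mathbb{R}^d)$ is complete, $(f_n)$ converges in $M^{r,s}_{\alpha,\beta}(\mathbb{R}^d)$, which is the stated compactness.

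The step I expect to be the real obstacle is the first one: choosing, separately in the two variables, the optimal Lebesgue exponent between the Hausdorff--Young/Young value and the Plancherel value $2$, and tracking carefully how these choices interact with the weighted mixed-norm structure and with $r\le p$, $s\le q$, so that the convergence condition comes out as the product of the two maxima in \eqref{eqGalperinGrochenig1} and not something strictly stronger. A subsidiary technicality is that for $r$ or $s$ in $(0,1)$ the quantity \eqref{eqModulationSpace1} is only a quasi-norm, so all the splittings and triangle-inequality steps above must be performed with the relevant quasi-triangle inequalities.
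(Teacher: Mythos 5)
The paper does not prove this theorem: it is imported verbatim as a particular case of the results of Galperin and Galperin--Gr\"ochenig (the references [Ga], [GG] in the bibliography), so the only comparison available is with the proof in those papers. Your sketch has the same skeleton as that proof: the STFT bounds $|V_gf(x,\omega)|\lesssim (1+|x|)^{-a}\|f\|_{L^p_a}$ and $\|V_gf(x,\cdot)\|_{L^{\rho}_{\omega}}\lesssim (1+|x|)^{-a}\|f\|_{L^p_a}$ for $\rho\ge\max(2,p')$ (with their duals obtained from $|V_gf(x,\omega)|=|V_{\widehat g}\widehat f(\omega,-x)|$), the Hausdorff--Young versus Plancherel dichotomy that is responsible for the two maxima on the right of \eqref{eqGalperinGrochenig1}, and, for compactness, local Arzel\`a--Ascoli plus a uniform tail estimate. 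These ingredients are all correct as stated.

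The genuine gap is the step you yourself flag as ``the real obstacle'': the passage from those STFT bounds to the continuous embedding with condition \eqref{eqGalperinGrochenig1} is asserted, not performed, and it cannot be done by ``inserting the bounds and interpolating'' pointwise. The two gains are not simultaneously available in product form: one only has $|V_gf(x,\omega)|\lesssim\min\bigl\{(1+|x|)^{-a}\|f\|_{L^p_a},\,(1+|\omega|)^{-b}\|\widehat f\|_{L^q_b}\bigr\}$, so the iterated integral in \eqref{eqModulationSpace1} must be split into regions of phase space (in Galperin--Gr\"ochenig, essentially along a curve $|\omega|\asymp|x|^{\theta}$), using the $x$-decay bound where $|x|$ dominates and the $\omega$-decay bound where $|\omega|$ dominates, and then optimizing over $\theta$; the existence of an admissible $\theta$ is exactly equivalent to the product-of-maxima inequality \eqref{eqGalperinGrochenig1}. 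Without this decomposition and the accompanying bookkeeping (which also has to respect $r\le p$, $s\le q$ and the choice between the exponents $p'$, $q'$ and $2$), the claim that ``the precise condition for convergence is exactly \eqref{eqGalperinGrochenig1}'' is unsupported, and the same applies to the uniform smallness of the tails over $\{|x|>R\}\cup\{|\omega|>R\}$ on which your compactness argument rests. The rest of the proposal (equicontinuity via differentiated windows, Arzel\`a--Ascoli, completeness of $M^{r,s}_{\alpha,\beta}$, quasi-norm caveats for $r,s<1$) is sound, but the theorem's entire quantitative content lives in the step that is missing.
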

As usual $p^{\prime}, q^{\prime}, \cdots$ denote the H\"older duals of $p,q, \cdots \in \left[1, \infty \right]$: $\frac{1}{p}+ \frac{1}{p^{\prime}}=1$, {\it etc}. We also remark that the condition $r^{\prime} \leq p, p^{\prime} \leq r $ is equivalent to $\left|\frac{1}{p}- \frac{1}{2}\right| \leq \left|\frac{1}{r}- \frac{1}{2} \right|$.

We will resort to this theorem to prove (Theorem \ref{TheoremMinimum}) that inequality (\ref{eqIntro5}) holds if $1<r,s \leq 2$; $\alpha, \beta \geq 0$; $0<a,b \leq r^{\prime}$, and $r \leq p \leq r^{\prime}$; $s \leq q \leq r^{\prime}$ and (\ref{eqGalperinGrochenig1}) is valid.

That this extends the Cowling-Price uncertainty principle is now obvious. If we set $r=s=2$, $\alpha=\beta=0$, $d=1$ in (\ref{eqGalperinGrochenig1}) , we obtain (\ref{eqIntro4}). Likewise (\ref{eqIntro3}) follows from (\ref{eqIntro5}) and the fact that $M_{0,0}^{2,2} (\mathbb{R})=L^2 (\mathbb{R})$ \cite{Grochenig}.

 We shall prove the second inequality (eq.(\ref{eqIntro6})) in Theorem \ref{TheoremMinimizersHilbert}. For this, we shall require another compact embedding theorem  due to Boggiatto and Toft \cite{Boggiatto} (see also \cite{Pfeuffer}) for modulation spaces of the form $M_m^{p,q} (\mathbb{R}^d)$ (to be defined below in eq.(\ref{eqModulationspace3})):
\begin{Theorem}[Boggiatto-Toft]\label{TheoremBoggiatto}
Assume that $m_1,m_2 \in \mathcal{P} (\mathbb{R}^{2d})$, and that $p,q \in \left[1, \infty \right]$. Then the embedding
\begin{equation}
i: M_{m_1}^{p,q} (\mathbb{R}^d) \to  M_{m_2}^{p,q} (\mathbb{R}^d)
\label{eqBoggiatto1}
\end{equation}
is compact if and only if $m_2 / m_1 \in L_0^{\infty} (\mathbb{R}^{2d})$.
\end{Theorem}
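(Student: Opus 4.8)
The natural framework is time-frequency analysis via Gabor frames. Recall that for $m\in\mathcal{P}(\mathbb{R}^{2d})$ the norm of $M_m^{p,q}(\mathbb{R}^d)$ is $\|f\|_{M_m^{p,q}}=\|m\cdot V_gf\|_{L^{p,q}(\mathbb{R}^{2d})}$ for a fixed $g\in\mathcal{S}(\mathbb{R}^d)$, with equivalent norms for different Schwartz windows. Fix a Gaussian $\phi$ and a lattice $\Lambda=a\mathbb{Z}^d\times b\mathbb{Z}^d$ dense enough that $\{\pi(\lambda)\phi\}_{\lambda\in\Lambda}$ is a Gabor frame, and let $\gamma\in\mathcal{S}(\mathbb{R}^d)$ be a dual window. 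Then for every weight in the relevant moderate class the analysis operator $C_\phi f=(V_\phi f(\lambda))_{\lambda\in\Lambda}$ maps $M_m^{p,q}$ boundedly into the weighted mixed-norm sequence space $\ell_m^{p,q}(\Lambda)$, the synthesis operator $D_\gamma c=\sum_{\lambda}c_\lambda\,\pi(\lambda)\gamma$ maps $\ell_m^{p,q}(\Lambda)$ boundedly into $M_m^{p,q}$, and $D_\gamma C_\phi=\mathrm{Id}$ on $M_m^{p,q}$; thus each $M_m^{p,q}$ is a retract of the corresponding sequence space.

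\textbf{Sufficiency.} Assume $m_2/m_1\in L_0^\infty(\mathbb{R}^{2d})$ and write $i=D_\gamma\circ\iota\circ C_\phi$, where $\iota\colon\ell_{m_1}^{p,q}(\Lambda)\to\ell_{m_2}^{p,q}(\Lambda)$ is the inclusion of sequence spaces. Under the isometries $\ell_{m_i}^{p,q}(\Lambda)\cong\ell^{p,q}(\Lambda)$, $c\mapsto(m_i(\lambda)c_\lambda)_\lambda$, the map $\iota$ becomes the diagonal operator on $\ell^{p,q}(\Lambda)$ with entries $m_2(\lambda)/m_1(\lambda)$. Since $m_1,m_2$ are moderate, so is their quotient, hence $m_2/m_1$ is (equivalent to) a function with controlled oscillation over a fundamental domain of $\Lambda$; therefore $m_2/m_1\in L_0^\infty$ forces $m_2(\lambda)/m_1(\lambda)\to 0$ as $|\lambda|\to\infty$. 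A diagonal operator on $\ell^{p,q}$ whose entries vanish at infinity is the operator-norm limit of its finite truncations, which are finite rank, so it is compact; composing with the bounded operators $C_\phi$ and $D_\gamma$ shows $i$ is compact.

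\textbf{Necessity.} We argue by contraposition. If $m_2/m_1\notin L_0^\infty(\mathbb{R}^{2d})$, there are $\delta>0$ and $z_n\in\mathbb{R}^{2d}$ with $|z_n|\to\infty$ and $m_2(z_n)/m_1(z_n)\ge\delta$ (first essentially, then at genuine points by the near-continuity of moderate weights). Using the covariance $|V_g(\pi(z)\phi)(w)|=|V_g\phi(w-z)|$ together with moderateness, a change of variables $w=u+z$ gives $\|\pi(z)\phi\|_{M_m^{p,q}}\asymp m(z)$ for $m\in\{m_1,m_2\}$: the upper bound because $\|v\cdot V_\phi\phi\|_{L^{p,q}}<\infty$ for every polynomial weight $v$, the lower bound by restricting the mixed-norm integral to a ball on which $V_\phi\phi$ is bounded away from $0$. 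Put $f_n=\pi(z_n)\phi/\|\pi(z_n)\phi\|_{M_{m_1}^{p,q}}$; then $\|f_n\|_{M_{m_1}^{p,q}}=1$, so $\{f_n\}$ is bounded in $M_{m_1}^{p,q}$, while $\|f_n\|_{M_{m_2}^{p,q}}\asymp m_2(z_n)/m_1(z_n)\gtrsim\delta$. On the other hand $f_n\to 0$ in $\mathcal{S}'(\mathbb{R}^d)$: for $\psi\in\mathcal{S}$ one has $\langle f_n,\psi\rangle=\|\pi(z_n)\phi\|_{M_{m_1}^{p,q}}^{-1}\,\overline{V_\phi\psi(z_n)}$, and $|V_\phi\psi(z_n)|$ decays faster than any polynomial while $\|\pi(z_n)\phi\|_{M_{m_1}^{p,q}}\asymp m_1(z_n)\gtrsim(1+|z_n|)^{-N}$. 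If $i$ were compact, a subsequence of $\{f_n\}$ would converge in $M_{m_2}^{p,q}$, and since $M_{m_2}^{p,q}\hookrightarrow\mathcal{S}'$ continuously the limit would have to be $0$, contradicting the uniform lower bound $\|f_n\|_{M_{m_2}^{p,q}}\gtrsim\delta$. Hence $i$ is not compact.

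\textbf{Main obstacle.} The necessity direction is essentially soft once the covariance estimate $\|\pi(z)\phi\|_{M_m^{p,q}}\asymp m(z)$ is recorded. The real work lies on the sufficiency side: one needs that a single pair of Schwartz windows $(\phi,\gamma)$ on a sufficiently dense lattice simultaneously yields bounded analysis and synthesis operators for all the weighted modulation spaces involved — the standard but nontrivial Gabor frame machinery for moderate weights — and one must justify the transfer ``$m_2/m_1\in L_0^\infty(\mathbb{R}^{2d})\Rightarrow (m_2/m_1)|_\Lambda$ vanishes at infinity'', which rests on the fact that a quotient of moderate weights is again moderate and hence has controlled oscillation on lattice cells. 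Everything else is bookkeeping with mixed-norm sequence spaces, valid uniformly for $p,q\in[1,\infty]$.
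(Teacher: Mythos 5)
The paper does not prove this statement: it is imported verbatim from Boggiatto--Toft \cite{Boggiatto} (see also \cite{Pfeuffer}), with the remark that it also follows from Theorem 9.4 of \cite{Feichtinger5}. So there is no in-paper argument to compare against; what you have written is a self-contained proof along the standard lines of those references, and it is essentially correct. Your sufficiency direction (retract onto weighted mixed-norm sequence spaces via a Gabor frame, then compactness of a diagonal operator with entries vanishing at infinity) is exactly the coorbit-style mechanism behind \cite{Feichtinger5}, and your necessity direction (normalized time-frequency shifts of a Gaussian, the covariance estimate $\|\pi(z)\phi\|_{M_m^{p,q}}\asymp m(z)$, and weak-$\ast$ convergence to zero in $\mathcal{S}'$) is the standard counterexample construction. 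Two small points are worth tightening. First, for $p=\infty$ or $q=\infty$ the Gabor expansion $f=\sum_\lambda c_\lambda\pi(\lambda)\gamma$ converges only in the weak-$\ast$ sense, but the operator identity $D_\gamma C_\phi=\mathrm{Id}$ on $M_m^{p,q}$ still holds for all $1\le p,q\le\infty$ and $v$-moderate $m$ (Gr\"ochenig, Theorem 12.2.4), so the factorization argument survives; you should say this explicitly rather than leave it under ``standard but nontrivial machinery.'' Second, with the paper's Definition \ref{DefinitionWeight0} the weights in $\mathcal{P}(\mathbb{R}^{2d})$ are continuous, so $m_2/m_1$ is continuous and the essential supremum in the definition of $L_0^{\infty}$ is an honest supremum; this makes both your ``first essentially, then at genuine points'' step and the transfer of decay to the lattice immediate, without any appeal to controlled oscillation over fundamental domains.
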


Here $ \mathcal{P} (\mathbb{R}^{2d})$ is a class of weights which are polynomially moderate (see Definition \ref{DefinitionWeight0}) and $L_0^{\infty} (\mathbb{R}^{2d})$ is the set of all $f \in L^{\infty} (\mathbb{R}^{2d})$ such that
\begin{equation}
\lim_{R \to \infty} \left( \mbox{ess~sup}_{|z| \ge R} |f(z)| \right)=0 .
\label{eqweights1}
\end{equation}
Let us remark that the compactness of such embeddings between modulation spaces also follows from Theorem 9.4 in \cite{Feichtinger5}.

The strategy of the proofs consists of approaching this problem from the point of view of variational calculus. We will minimize the functional
\begin{equation}
f \mapsto \|~|x|^a f \|_{L^p(\mathbb{R}^d)} + \|~|\omega|^b \widehat{f} \|_{L^q(\mathbb{R}^d)}
\label{eqIntro7}
\end{equation}
on the constraint set:
\begin{equation}
\Omega= \left\{f \in L_a^p (\mathbb{R}^d) \cap \mathcal{F} L_b^q (\mathbb{R}^d): ~ \|f\|_{M_{\alpha, \beta}^{r,s} (\mathbb{R}^d)}=1 \right\}.
\label{eqIntro8}
\end{equation}
In a similar fashion, we will seek to minimize
\begin{equation}
f \mapsto  \| \psi f \|_{L^2(\mathbb{R}^d)}^2 + \| \phi \widehat{f} \|_{L^2(\mathbb{R}^d)}^2,
\label{eqIntro9}
\end{equation}
for $f$ in an appropriate modulation space $M_m^2 (\mathbb{R}^d)$ and such that $\|f\|_{M_{m_0}^2 (\mathbb{R}^d)}=1$, where $M_{m_0}^2 (\mathbb{R}^d)$ is another modulation space, such that $M_{m}^2 (\mathbb{R}^d) \subset M_{m_0}^2 (\mathbb{R}^d)$ (see Corollary \ref{CorollaryEmbedding}).

These variational problems will naturally lead to Euler-Lagrange equations (Theorem \ref{TheoremBanachCase} and Theorem \ref{TheoremEulerLagrangeEquation1}, Theorem \ref{TheoremEulerLagrangeInverse}).

To prove the existence of minimizers, we start by showing that the functionals (\ref{eqIntro7},\ref{eqIntro9}) are weakly lower semicontinuous (Lemma \ref{LemmaWeakLowerSemicontinuity} and Lemma \ref{Lemmaweaklowersemicontinuous}). Then we prove that certain subsets of the constraint sets \footnote{Each of these subsets is the intersection of a closed ball and the constraint set.} are weakly sequentially compact (Proposition \ref{PropositionWeakCompactness} and Proposition \ref{PropositionCompactnessHilbert1}). This means that the functionals attain a minimum on these subsets. Finally, we show that they are in fact minima on the entire constraint sets (Theorems \ref{TheoremMinimum} and \ref{TheoremMinimizersHilbert}).

An essential tool in this last step is a generalization of an inequality for Wigner and radar ambiguity functions due to E. Lieb. In \cite{Lieb} Lieb proved that
\begin{equation}
\|A(f,g) \|_{L^r (\mathbb{R}^2)} \geq C \|f\|_{L^p (\mathbb{R})} ~ \|g\|_{L^{p^{\prime}} (\mathbb{R})},
\label{eqIntro10}
\end{equation}
where $1 \leq r <2$; $r \leq p, p^{\prime} \leq r^{\prime}$, $C>0$ is a constant and $A(f,g) (x, \omega)$ is the radar ambiguity function:
\begin{equation}
A(f,g) (x, \omega)=e^{-i \pi \omega \cdot x} V_g f (-x, \omega).
\label{eqIntro11}
\end{equation}
In Theorem \ref{Bounds} and Corollary \ref{CorollaryBoundsA} we shall prove an extension of this inequality to arbitrary dimension $d \geq 1$ and mixed-norm spaces:
\begin{equation}
\|A(f,g) \|_{L_{\omega,x}^{r,s} (\mathbb{R}^{2d})} \geq C \|f\|_{L^u (\mathbb{R}^d)} ~ \|g\|_{L^v (\mathbb{R}^d)}
\label{eqIntro12}
\end{equation}
for appropriate values of $u,v,r,s$ and where
\begin{equation}
||F||_{L_{\omega,x}^{r,s} (\mathbb{R}^{2d})}=\left(\int_{\mathbb{R}^d} \left(\int_{\mathbb{R}^d} |F(x, \omega)|^r d \omega \right)^{\frac{s}{r}} d x\right)^{\frac{1}{s}}~.
\label{eqIntro13}
\end{equation}
We also determine the sharp constants and the functions which saturate (\ref{eqIntro12}).

\section*{Notation}

We denote by $\mathcal{S}(\mathbb{R}^d)$ the space of Schwartz test functions and by $\mathcal{S}^{\prime}(\mathbb{R}^d)$ its dual - the space of tempered distributions. The duality bracket is $\langle f,g \rangle$ for $g$ in some functional space $\mathcal{B}$ and $f$ in its dual $\mathcal{B}^{\prime}$. The inner product in a Hilbert space $\mathcal{H}$ is $\left( \cdot, \cdot \right)_{\mathcal{H}}$, which is linear in the first argument. The Fourier transform of $f \in L^2 (\mathbb{R}^d)$ is given by
\begin{equation}
\widehat{f} (\omega) = \int_{\mathbb{R}^d} f(x) e^{- 2 \pi i x \cdot \omega} dx
\label{eqNotation1}
\end{equation}
where
\begin{equation}
x \cdot \omega = x_1 \omega_1+ \cdots x_d \omega_d,
\label{eqNotation2}
\end{equation}
for all $x=(x_1, \cdots, x_d),~ \omega=\left(\omega_1, \cdots, \omega_d \right) \in \mathbb{R}^d$. We shall also use the notation:
\begin{equation}
\langle x \rangle = 1+ |x|.
\label{eqNotation2.1}
\end{equation}
Notice that the notation $\langle x \rangle $ is more commonly used in the literature for the Japanese bracket $\langle x \rangle =\left(1+|x|^2 \right)^{1/2}$, which is equivalent to our weight (\ref{eqNotation2.1}). We shall nevertheless use the weight (\ref{eqNotation2.1}) as it will make our derivations simpler.

If there is a constant $C>0$ such that $ A(f) \le C B(f)$ for all $f$ in some set, then we write $A(f)\lesssim B(f)$. If  $A(f)  \lesssim B(f)$ and $B(f) \lesssim A(f)$, then we write  $A(f) \asymp B(f)$. If a sequence $(u_n)_n$ in some normed space $X$ converges strongly to some $u \in X$, then we write $u_n \to u$, and if it converges weakly, then we write $u_n \rightharpoonup u$.

We denote the compact embedding of a functional space $\mathcal{A}$ into $\mathcal{B}$ by $\mathcal{A} \subset \subset \mathcal{B}$.

\section{Modulation spaces}

\begin{Definition}\label{DefinitionWeight0}
A weight in $\mathbb{R}^d$ is a positive and continuous function $m \in L_{loc}^{\infty} (\mathbb{R}^d)$. Given two weights $m$ and $v$, $m$ is said to be $v$-moderate, if
\begin{equation}
m(x+y) \le C m(x) v(y), \hspace{1 cm} \forall x,y \in \mathbb{R}^d,
\label{eqvmoderate}
\end{equation}
for some $C>0$. We denote by $\mathcal{P} (\mathbb{R}^d)$ the set of all weights $m$ which are $v$-moderate for some polynomial weight $v$.
\end{Definition}

\begin{Definition}\label{DefinitionModulationspace}
Given a window $g \in \mathcal{S} (\mathbb{R}^d) \backslash \left\{0 \right\}$, we define the short-time Fourier transform of $f \in  \mathcal{S} (\mathbb{R}^d)$ by
\begin{equation}
  V_gf(x,\omega)=\left( f,\pi(x,\omega)g\right)_{L^2 (\mathbb{R}^d)}=\int_{\mathbb{R}^d} f(t)\overline{g(t-x)}e^{-2\pi it \cdot \omega}dt.
\label{eqModulationspace1}
\end{equation}

This extends to $f \in  \mathcal{S}^{\prime} (\mathbb{R}^d) $, if we use the duality bracket:
\begin{equation}
V_gf(x,\omega)=\langle f,\pi(x,\omega) \overline{g} \rangle.
\label{eqModulationspace2}
\end{equation}
To study the time-frequency content of a function or tempered distribution, we shall consider the mixed norms:
\begin{equation}
||F||_{L_{x,\omega}^{r,s} (\mathbb{R}^{2d})}=\left(\int_{\mathbb{R}^d} \left(\int_{\mathbb{R}^d} |F(x, \omega)|^r d x\right)^{\frac{s}{r}} d \omega\right)^{\frac{1}{s}},
\label{eqLieb4.4}
\end{equation}
and
\begin{equation}
||F||_{L_{\omega,x}^{r,s} (\mathbb{R}^{2d})}=\left(\int_{\mathbb{R}^d} \left(\int_{\mathbb{R}^d} |F(x, \omega)|^r d \omega \right)^{\frac{s}{r}} d x\right)^{\frac{1}{s}},
\label{eqLieb4.5}
\end{equation}
for $F \in \mathcal{S}^{\prime} (\mathbb{R}^{2d})$ and $1 \le r,s< \infty$, with the obvious modification for $r$ or $s= \infty$.

Given a weight $m \in \mathcal{P} (\mathbb{R}^{2d})$, the modulation space $M_m^{r,s} (\mathbb{R}^d)$ is defined as the set of all $f \in \mathcal{S}^{\prime} (\mathbb{R}^{d})$ such that
\begin{equation}
||f||_{M_m^{r,s}  (\mathbb{R}^d)}= ||m V_gf||_{L_{x,\omega}^{r,s} (\mathbb{R}^{2d})} = \left(\int_{\mathbb{R}^d} \left(\int_{\mathbb{R}^d} |V_g f(x, \omega) m (x, \omega) |^r d x\right)^{\frac{s}{r}} d \omega\right)^{\frac{1}{s}} < \infty.
\label{eqModulationspace3}
\end{equation}
We shall write $M_m^r$, when $r=s$ and $M^{r,s}$, when $m\equiv 1$.
\end{Definition}

If instead of the order of integration (\ref{eqLieb4.4}), we use the alternative order (\ref{eqLieb4.5}), we obtain the so-called Wiener amalgam spaces $\mathbf{W}\left(\mathcal{F} \mathbf{L}^p, \ell ^q\right)(\mathbb{R}^d)$, which were introduced in \cite{Feichtinger2A}. The Fourier properties of these spaces for separable weights are explained in \cite{Feichtinger4}.

Among the modulation spaces we find for instance $L^2 (\mathbb{R}^d)$, or the Sobolev spaces $H^s (\mathbb{R}^d) $, the Feichtinger algebra ${\bf S}_0(\mathbb{R}^d)=M^{1,1} (\mathbb{R}^d)$ \cite{Feichtinger1}, but also the spaces $M_{\alpha, \beta}^{r,s} (\mathbb{R}^d)$ mentioned in the Introduction. Indeed, for $m(x, \omega) = \langle x \rangle^{\alpha} \langle \omega \rangle^{\beta}$, then $M_m^{r,s} (\mathbb{R}^d)= M_{\alpha, \beta}^{r,s} (\mathbb{R}^d)$ as in (\ref{eqModulationSpace1}). For the particular choice of weights $v_s(x,\omega)= \left(1+|x|^2+|\omega|^2 \right)^{s/2}$, and $r=s=2$ we obtain the so-called Shubin class spaces $\left(\mathbf{Q}_s (\mathbb{R}^d), \| \cdot \|_{\mathbf{Q}_s}\right)$. Notice that these spaces are denoted by $M_{v_s}^2$ in \cite{Grochenig}. Special versions of the form $M_m^2$ are also given in \cite{Aceska}.

For future reference, we state the following Proposition (Proposition 11.3.1 in \cite{Grochenig}):

\begin{Proposition}\label{PropositionModulationPartial}
Let $g \in \mathcal{S} (\mathbb{R}^d) \backslash \left\{0 \right\}$.
\begin{enumerate}
\item If $m(x, \omega) =m(x)$, then $M_m^2=L_m^2$.

\vspace{0.2 cm}
\item If $m(x, \omega) =m(\omega)$, then $M_m^2= \mathcal{F} L_m^2$.
\end{enumerate}
\end{Proposition}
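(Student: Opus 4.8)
The plan is to prove part (1) by a direct computation with the short-time Fourier transform and then obtain part (2) from it by Fourier duality. For (1) I would first reduce to the case $f\in\mathcal S(\mathbb R^d)$: since $\mathcal S(\mathbb R^d)$ is dense both in $M^2_m(\mathbb R^d)$ (the integrability exponents being finite) and in $L^2_m(\mathbb R^d)$, and since both spaces embed continuously into $\mathcal S'(\mathbb R^d)$, a sequence of Schwartz functions which is Cauchy for one of the two norms is Cauchy for the other and has the same limit in $\mathcal S'(\mathbb R^d)$; hence it is enough to prove the norm equivalence $\|f\|_{M^2_m}\asymp\|f\|_{L^2_m}$ for $f\in\mathcal S(\mathbb R^d)$, and the two spaces will then coincide with equivalent norms.

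For $f\in\mathcal S(\mathbb R^d)$, note that $V_gf(x,\cdot)$ is exactly the Fourier transform of the function $t\mapsto f(t)\,\overline{g(t-x)}$. Writing $m(x,\omega)=m(x)$ and applying Fubini's theorem followed by the Plancherel theorem in the $\omega$-variable, I get $\|f\|_{M^2_m}^2=\int_{\mathbb R^d}m(x)^2\,\|f\cdot\overline{T_xg}\|_{L^2}^2\,dx=\int_{\mathbb R^d}\int_{\mathbb R^d}m(x)^2|f(t)|^2|g(t-x)|^2\,dt\,dx$, where $T_xg(t)=g(t-x)$. Interchanging the order of integration, the whole question reduces to the pointwise two-sided estimate $\int_{\mathbb R^d}m(x)^2|g(t-x)|^2\,dx\asymp m(t)^2$ with constants independent of $t$. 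This is where the hypothesis $m\in\mathcal P(\mathbb R^d)$ enters: $v$-moderateness of $m$ for a polynomial weight $v$ gives $c\,v(t-x)^{-2}m(t)^2\le m(x)^2\le C\,v(t-x)^2m(t)^2$, and since $g\in\mathcal S(\mathbb R^d)\setminus\{0\}$ the integrals $\int_{\mathbb R^d}v(y)^{\pm2}|g(y)|^2\,dy$ are finite and strictly positive; substituting back yields $\|f\|_{M^2_m}^2\asymp\|f\|_{L^2_m}^2$.

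For part (2), with $m(x,\omega)=m(\omega)$, I would invoke the covariance relation $V_gf(x,\omega)=e^{-2\pi ix\cdot\omega}V_{\widehat g}\widehat f(\omega,-x)$. Taking moduli and performing the change of variable $x\mapsto-x$ shows that $\|f\|_{M^2_m}$ equals the norm of $\widehat f$ in the modulation space built from the Schwartz window $\widehat g$ and from the weight depending only on the time variable through $m$. Since part (1) was established for an arbitrary non-zero Schwartz window, it applies to $\widehat f$ with window $\widehat g$ and gives $\|f\|_{M^2_m}\asymp\|\widehat f\|_{L^2_m}=\|f\|_{\mathcal FL^2_m}$, which is the claim.

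I expect the only genuinely delicate point to be the passage from Schwartz functions to arbitrary elements of $M^2_m$ or $L^2_m$: for $f$ merely a tempered distribution the product $f\cdot\overline{T_xg}$ need not be square-integrable, so the Plancherel step is not literally available, and one has to go through density of $\mathcal S(\mathbb R^d)$ together with the compatibility of the two continuous embeddings into $\mathcal S'(\mathbb R^d)$ (alternatively, one can observe directly that $\|f\|_{M^2_m}<\infty$ forces $V_gf(x,\cdot)\in L^2(\mathbb R^d)$, hence $f\,\overline{T_xg}\in L^2(\mathbb R^d)$, for a.e.\ $x$, and therefore $f\in L^2_{\mathrm{loc}}(\mathbb R^d)$, so that the identity above holds verbatim by Tonelli). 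The Fubini/Plancherel computation and the moderateness estimate are otherwise routine.
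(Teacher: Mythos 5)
Your argument is correct and is essentially the standard proof of this fact (Proposition 11.3.1 in Gr\"ochenig's book, which the paper cites without reproducing a proof): Plancherel in the $\omega$-variable to reduce $\|f\|_{M^2_m}$ to $\int |f(t)|^2\bigl(\int m(x)^2|g(t-x)|^2\,dx\bigr)dt$, the two-sided moderateness estimate $\int m(x)^2|g(t-x)|^2\,dx\asymp m(t)^2$, and the covariance identity $|V_gf(x,\omega)|=|V_{\widehat g}\widehat f(\omega,-x)|$ to deduce (2) from (1). Your handling of the passage from Schwartz functions to general $f$ (observing that finiteness of the $M^2_m$-norm forces $f\,\overline{T_xg}\in L^2$ for a.e.\ $x$, hence $f\in L^2_{\mathrm{loc}}$, so Tonelli applies verbatim) is sound.
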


\section{Lieb's uncertainty principle for Modulation spaces}

The radar ambiguity function is given by:
\begin{equation}
A(f,g) (x, \omega) = \int_{\mathbb{R}^d} f \left(t - \frac{x}{2} \right) \overline{g \left(t + \frac{x}{2} \right)} e^{-2 \pi i \omega \cdot t} dt.
\label{eqAmbiguityFunction1}
\end{equation}

Let us remark that the ambiguity function and its Fourier transform - the Wigner transform - can be extended to $\left({\bf S}_0^{\prime} (\mathbb{R}^d), \|\cdot \|_{{\bf S}_0^{\prime}}\right)$, since it is evaluated by taking a tensor product, then an automorphism of phase space $\mathbb{R}^{2d}$ and then a partial Fourier transform. All these are well defined operators for the Feichtinger algebra $\left({\bf S}_0(\mathbb{R}^d), \|\cdot\|_{{\bf S}_0}\right)$, and extend, by duality, to ${\bf S}_0^{\prime} (\mathbb{R}^d)$ in a unique way (see \cite{Feichtinger6,Feichtinger7,Hormann}).

It is straightforward to obtain the following properties.
\begin{equation}
A(f,g) (x, \omega) = A(\widehat{f},\widehat{g}) (-\omega,x).
\label{eqAmbiguityFunction2}
\end{equation}

From (\ref{eqAmbiguityFunction1}) it follows that
\begin{equation}
A(f,g) (x, \omega)= e^{-i \pi \omega \cdot x} V_g f (-x, \omega)~.
\label{eqAmbiguityFunction3}
\end{equation}

Lieb proved (Theorem 1 in \cite{Lieb}) the following theorem:

\begin{Theorem}[Lieb]\label{Lieb2}
Let $r >2$. If $f \in L^p(\mathbb{R})$ and $g \in L^{p^{\prime}} (\mathbb{R})$ for $r^{\prime} \le p , p^{\prime} \leq r$, then:
\begin{equation}
||A(f,g)||_{L^r (\mathbb{R}^2)} \le \left[H(r,p)\right]^{1/r} ||f||_{L^p (\mathbb{R})} ||g||_{L^{p^{\prime}} (\mathbb{R})},
\label{eqLieb01}
\end{equation}
where $H(r,p) >0$ is given by:
\begin{equation}
\left[H(r,p) \right]^2 = \frac{p p^{\prime}}{r^2} |r-2|^{2-r} |r-p|^{-1+r/p} |r-p^{\prime}|^{-1+r/ p^{\prime}},
\label{eqLieb02}
\end{equation}
with the convention $0^0 \equiv 1$ when $r=p$ or $r= p^{\prime}$.
\end{Theorem}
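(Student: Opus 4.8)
The plan is to follow Lieb's strategy of realizing $A(f,g)$ as a partial Fourier transform and then splitting the estimate into two classical sharp inequalities — the Babenko--Beckner form of the Hausdorff--Young inequality and the Beckner/Brascamp--Lieb sharp Young convolution inequality — whose sharp constants multiply to give exactly $[H(r,p)]^{1/r}$. First I would set $F(t,x) = f\!\left(t-\tfrac{x}{2}\right)\overline{g\!\left(t+\tfrac{x}{2}\right)}$, so that by (\ref{eqAmbiguityFunction1}) the function $A(f,g)(x,\omega)$ is, for each fixed $x$, the Fourier transform of $F(\cdot,x)$ evaluated at $\omega$. Since $r>2$ we have $r'\le 2$, so the sharp Hausdorff--Young inequality applies in the $\omega$-variable and yields
\begin{equation}
\Big(\int_{\mathbb{R}}|A(f,g)(x,\omega)|^r\,d\omega\Big)^{1/r}\le A_{r'}\Big(\int_{\mathbb{R}}|F(t,x)|^{r'}\,dt\Big)^{1/r'},
\end{equation}
with $A_{r'}=\big((r')^{1/r'}/r^{1/r}\big)^{1/2}$. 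Raising this to the power $r$ and integrating in $x$ reduces matters to estimating $\int_{\mathbb{R}}G(x)^{r/r'}\,dx$, where $G(x)=\int_{\mathbb{R}}|F(t,x)|^{r'}\,dt$.

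The second step is to recognize $G$ as a convolution. The substitution $s=t-\tfrac{x}{2}$ gives $G(x)=\int_{\mathbb{R}}|f(s)|^{r'}|g(s+x)|^{r'}\,ds=(\phi*\tilde\psi)(x)$, where $\phi=|f|^{r'}$, $\psi=|g|^{r'}$ and $\tilde\psi(y)=\psi(-y)$. I would then apply the sharp Young inequality with exponents $a=p/r'$, $b=p'/r'$ and target exponent $c=r/r'$. These are admissible precisely because $\tfrac1a+\tfrac1b=r'\big(\tfrac1p+\tfrac1{p'}\big)=r'=1+\tfrac{r'}{r}=1+\tfrac1c$, and because $a,b\ge 1$ is equivalent to $r'\le p\le r$, which is exactly the hypothesis on $p$. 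Since $\|\phi\|_a=\|f\|_p^{r'}$ and $\|\psi\|_b=\|g\|_{p'}^{r'}$, this gives $\|G\|_{L^{r/r'}}\le C\,\|f\|_p^{r'}\|g\|_{p'}^{r'}$, with $C$ the sharp Young constant for the triple $(a,b,c)$. Assembling the two steps produces
\begin{equation}
\|A(f,g)\|_{L^r(\mathbb{R}^2)}\le A_{r'}\,C^{1/r'}\,\|f\|_{L^p(\mathbb{R})}\,\|g\|_{L^{p'}(\mathbb{R})}.
\end{equation}

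The main work — and the part I expect to be the real obstacle — is the bookkeeping that shows $A_{r'}^{\,r}\,C^{r/r'}$ simplifies to the stated constant $H(r,p)$. Writing every sharp constant in the Babenko--Beckner form $\big(s^{1/s}/(s')^{1/s'}\big)^{1/2}$, substituting $a=p/r'$, $b=p'/r'$, $c=r/r'$, and using the identities $r/r'=r-1$ and $\tfrac1p+\tfrac1{p'}=1$, the elementary but lengthy algebra should collapse to the closed form (\ref{eqLieb02}). As a sanity check I would first verify the endpoint $p=p'=2$, where the formula predicts $H(r,2)=2/r$, recovering the classical $L^2$ bound $\|A(f,g)\|_r\le(2/r)^{1/r}\|f\|_2\|g\|_2$. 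The degenerate factors arising at $p=r$ or $p=r'$ — where one of $a,b$ equals $1$ and the corresponding Young factor reduces to $1$ — are precisely the instances governed by the convention $0^0\equiv 1$ in the theorem.

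Finally, although only the upper bound is asserted, I would record why the composed constant is not merely an upper bound but genuinely $[H(r,p)]^{1/r}$: choosing $f,g$ to be Gaussians makes $F$, and hence $G$, Gaussian, so that both the Hausdorff--Young step and the Young step are saturated \emph{simultaneously}. This shows the product $A_{r'}\,C^{1/r'}$ cannot be improved by this route and confirms that the algebra of the previous paragraph terminates exactly at (\ref{eqLieb02}) rather than at a strictly larger quantity. The only points requiring genuine care throughout are the admissibility range of the exponents (that $r'\le p\le r$ is exactly $a,b\ge 1$) and the correct $2\pi$-normalization of the Fourier transform, which is the normalization already fixed in (\ref{eqNotation1}) and hence matches the Babenko--Beckner constant used above.
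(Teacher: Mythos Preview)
The paper does not prove this theorem; it is quoted without proof as Theorem~1 from Lieb's paper \cite{Lieb}. Your proposal correctly reconstructs Lieb's original argument --- sharp Hausdorff--Young in the Fourier variable followed by the sharp Young convolution inequality --- and this is precisely the strategy the paper itself adopts (in reverse form, using the converse Hausdorff--Young bound and Leindler's reverse Young inequality) when it proves its mixed-norm generalization of the companion reverse inequality in Theorem~\ref{Bounds}.
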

This inequality is sometimes called {\it Lieb's uncertainty principle}, as it implies a lower bound on the measure of the support of $A(f,g)$ \cite{Grochenig}.

Lieb also proved a reverse inequality (Theorem 2 in \cite{Lieb}).

\begin{Theorem}[Lieb]\label{Lieb2}
Assume that, for a.e. fixed $x \in \mathbb{R}$, the function $t \mapsto f\left (t-\frac{x}{2} \right) \overline{g\left (t+\frac{x}{2} \right)}$ is in $L^1 (\mathbb{R})$ for almost every $x \in \mathbb{R}$. Let $1 \le r<2$ and assume that $0< ||A(f,g)||_{L^r (\mathbb{R}^2)}< \infty$. Then $f,g \in L^u (\mathbb{R})$ for every $r \le u \le r^{\prime}$. Moreover, for any $p$ with $r \le p, p^{\prime} \le r^{\prime}$, we have that:
\begin{equation}
||A(f,g)||_{L^r (\mathbb{R}^2)} \ge \left[H(r,p) \right]^{1/r} ||f||_{L^p (\mathbb{R})} ||g||_{L^{p^{\prime}} (\mathbb{R})}.
\label{eqLieb1}
\end{equation}
\end{Theorem}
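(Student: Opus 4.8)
\emph{Proof sketch.} The plan is to deduce both conclusions from the direct inequality above, used with the exponent $r'$ (which is $>2$ since $1\le r<2$), together with the orthogonality relation for the ambiguity function. From (\ref{eqAmbiguityFunction3}) and Moyal's identity for $V_gf$ one gets, for $f_i,g_i\in L^2(\mathbb{R})$,
\[
  \big(A(f_1,g_1),A(f_2,g_2)\big)_{L^2(\mathbb{R}^{2})}=(f_1,f_2)_{L^2(\mathbb{R})}\,\overline{(g_1,g_2)_{L^2(\mathbb{R})}},
\]
since the phase $e^{-i\pi\omega x}$ cancels and the reflection $x\mapsto-x$ preserves Lebesgue measure. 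The hypothesis that $t\mapsto f(t-x/2)\overline{g(t+x/2)}$ lies in $L^1(\mathbb{R})$ for a.e.\ $x$ guarantees that $A(f,g)(x,\cdot)$ is, for a.e.\ $x$, a genuine bounded continuous function, so that (together with $A(f,g)\in L^r$) one may verify by Fubini that $\big(A(f,g),A(\phi,\psi)\big)_{L^2}=(f,\phi)_{L^2}\,\overline{(g,\psi)_{L^2}}$ for all $\phi,\psi\in\mathcal{S}(\mathbb{R})$; a short bootstrap will first promote $f$ and $g$ to $L^2$, after which the displayed identity holds in its classical form.

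First I would prove the membership statement. As $\|A(f,g)\|_{L^r}>0$, neither $f$ nor $g$ vanishes; fix $\psi\in\mathcal{S}(\mathbb{R})$ with $(g,\psi)_{L^2}\neq0$. For every $\phi\in\mathcal{S}(\mathbb{R})$ and every $\sigma$ with $r\le\sigma,\sigma'\le r'$, H\"older's inequality followed by the direct inequality applied to $A(\phi,\psi)$ at the admissible pair $(r',\sigma)$ gives
\[
  \big|(f,\phi)_{L^2}\big|\,\big|(g,\psi)_{L^2}\big|=\big|\big(A(f,g),A(\phi,\psi)\big)_{L^2}\big|\le\|A(f,g)\|_{L^r}\,\|A(\phi,\psi)\|_{L^{r'}}\lesssim\|A(f,g)\|_{L^r}\,\|\phi\|_{L^\sigma}\,\|\psi\|_{L^{\sigma'}}.
\]
Hence $\phi\mapsto(f,\phi)_{L^2}$ is bounded on $L^\sigma(\mathbb{R})$, so $f\in L^{\sigma'}(\mathbb{R})$; letting $\sigma$ run over the admissible exponents yields $f\in L^u(\mathbb{R})$ for every $u\in[r,r']$, and likewise for $g$. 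In particular $f,g\in L^2(\mathbb{R})$, which legitimizes the orthogonality relation and gives $\|A(f,g)\|_{L^2}=\|f\|_{L^2}\|g\|_{L^2}$.

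The same duality produces a lower bound, but with a non-optimal constant: since $\|f\|_{L^p}\|g\|_{L^{p'}}=\sup\big|\big(A(f,g),A(\phi,\psi)\big)_{L^2}\big|$ over $\|\phi\|_{L^{p'}}\le1$, $\|\psi\|_{L^p}\le1$, combining H\"older with the direct inequality at $(r',p')$ gives $\|A(f,g)\|_{L^r}\ge[H(r',p')]^{-1/r'}\|f\|_{L^p}\|g\|_{L^{p'}}$. The main obstacle is that $[H(r',p')]^{-1/r'}$ is strictly smaller than the sharp constant $[H(r,p)]^{1/r}$ asserted in the theorem: in the duality the H\"older step is an equality only when $A(\phi,\psi)$ is proportional to $A(f,g)$, whereas the direct inequality is saturated only by Gaussians, and these two requirements are met simultaneously only when $f$ and $g$ are themselves Gaussians. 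To reach the sharp constant I would instead prove that Gaussians minimize the quotient $\|A(f,g)\|_{L^r}/(\|f\|_{L^p}\|g\|_{L^{p'}})$ --- for instance through a semigroup/monotonicity argument of Beckner type (using the smoothing of the ambiguity function under the heat or Mehler flow, whose fixed points are the Gaussian states), or through the covariance of $A$ under dilations and the metaplectic group, which reduces matters to a one-parameter Gaussian family --- and then verify the inequality by the explicit Gaussian computation, which returns precisely $[H(r,p)]^{1/r}$ (e.g.\ for the standard Gaussian $\gamma$ one finds $\|A(\gamma,\gamma)\|_{L^r(\mathbb{R}^2)}=(2/r)^{1/r}\|\gamma\|_{L^2}^2=[H(r,2)]^{1/r}\|\gamma\|_{L^2}^2$). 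Establishing this extremality of Gaussians for a lower bound on an $L^r$-norm with $r<2$ is the step I expect to require most of the work; the remainder is soft analysis built on the direct inequality and the orthogonality relation.
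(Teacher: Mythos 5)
Your duality scheme does not deliver the theorem as stated, and the gap is precisely where you locate it yourself. What you actually establish (modulo the points below) is the membership $f,g\in L^u$ for $u\in[r,r']$ and the lower bound with constant $[H(r',p')]^{-1/r'}$, which you correctly observe is \emph{strictly smaller} than the asserted $[H(r,p)]^{1/r}$. The sharp constant is the entire content of the inequality (\ref{eqLieb1}); replacing its proof by ``prove Gaussians are extremizers via a Beckner-type semigroup monotonicity or a metaplectic symmetry reduction'' is a plan, not an argument, and for a \emph{reverse} $L^r$ bound with $r<2$ neither route is routine --- symmetry considerations alone cannot identify the extremizer of a non-convex variational problem, and no monotonicity of $\|A(e^{t\Delta}f,e^{t\Delta}g)\|_{L^r}$ is established or even obviously true. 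There is also a circularity in the ``soft'' half: Moyal's identity $\bigl(A(f,g),A(\phi,\psi)\bigr)_{L^2}=(f,\phi)\,\overline{(g,\psi)}$ is what you use to prove $f,g\in L^2$, yet its classical validity presupposes $f,g\in L^2$. Your Fubini route gets as far as $\int\!\!\int f(u)\overline{\phi(u)}\,\overline{g(v)}\psi(v)\,du\,dv$ as an \emph{iterated} integral, but factoring it into $(f,\phi)\overline{(g,\psi)}$ requires $f\overline{\phi},\,g\overline{\psi}\in L^1$, which is not yet known; this needs a genuine truncation argument, not a parenthetical ``short bootstrap.''

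The paper's proof (Section 3.2, which is Lieb's original argument specialized to $d=1$, $r=s$) avoids duality entirely and obtains membership and the sharp constant in one stroke. For a.e.\ fixed $x$ the hypothesis makes $A(f,g)(x,\cdot)$ the Fourier transform of an $L^1$ function, so the \emph{converse} Hausdorff--Young inequality (\ref{eqHausdorffYoung2}) with the sharp Babenko--Beckner constant gives
\begin{equation*}
\int |A(f,g)(x,\omega)|^r\,d\omega \;\ge\; C_{r'}^{\,r}\Bigl(\bigl(|f^{\vee}|^{r'}\!\star|g|^{r'}\bigr)(-x)\Bigr)^{r/r'}.
\end{equation*}
Integrating in $x$ turns $\|A(f,g)\|_{L^r}^{r'}$ into a bound from above for an $L^{r/r'}$-norm of the convolution $|f^{\vee}|^{r'}\!\star|g|^{r'}$ with exponent $r/r'\le 1$, and Leindler's reverse Young inequality (Theorem \ref{TheoremLeindler}) then simultaneously (i) forces $|f|^{r'}\in L^m$ and $|g|^{r'}\in L^n$ for all admissible $(m,n)$ --- this is the membership claim, with no appeal to Moyal --- and (ii) produces exactly the constant $C_{r'}(C_{p/r'}C_{p'/r'}/C_{r/r'})^{1/r'}=[H(r,p)]^{1/r}$ of (\ref{eqLieb4.1}). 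If you want to salvage your approach, keep the duality argument as a quick proof of the membership statement (after repairing the Fubini step), but the sharp inequality should be proved by the Hausdorff--Young/Leindler chain.
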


\begin{Remark}\label{RemarkLieb1}
The constant $H(r,p)$ can be expressed as:
\begin{equation}
H(r,p) = C_{r^{\prime}}^r \left( \frac{C_{\frac{p}{r^{\prime}}}C_{\frac{p^{\prime}}{r^{\prime}}}}{C_{\frac{r}{r^{\prime}}}}\right)^{r/r^{\prime}},
\label{eqLieb4.1}
\end{equation}
where, for $0 < p \le \infty$, $C_p$ is the Babenko-Beckner \cite{Babenko,Beckner} constant, which reads:
\begin{equation}
C_p=\sqrt{\frac{p^{1/p}}{|p^{\prime}|^{1/p^{\prime}}}},
\label{eqLieb4.2}
\end{equation}
for $p \neq 1$ and $p \neq \infty$, and
\begin{equation}
C_1=C_{\infty}=1.
\label{eqLieb4.3}
\end{equation}
\end{Remark}

\begin{Remark}\label{RemarkLieb2}
The constants in (\ref{eqLieb01},\ref{eqLieb1}) are sharp. Lieb also proved that one can obtain an equality in (\ref{eqLieb01}) for $p,p^{\prime} > r^{\prime}$ and in (\ref{eqLieb1}) for $1<r<2$ if and only if $f,g$ are certain matched Gaussians.
\end{Remark}

The purpose of this section is to generalize inequality (\ref{eqLieb1}) to arbitrary dimension $d>1$ and for the mixed norms $|| \cdot ||_{L_{x,\omega}^{r,s} (\mathbb{R}^{2d})}$ and $||\cdot||_{L_{\omega,x}^{r,s} (\mathbb{R}^{2d})}$ (cf.(\ref{eqLieb4.4},\ref{eqLieb4.5})).

\begin{Remark}\label{RemarkLieb3}
A generalization of inequality (\ref{eqLieb01}) for modulation spaces was derived in \cite{Cordero}.
\end{Remark}

We shall prove the following generalization of the reverse Lieb inequality:

\begin{Theorem}\label{Bounds}
Assume that $t \mapsto \widehat{f} \left(t + \frac{\omega}{2} \right) \overline{\widehat{g} \left(t - \frac{\omega}{2} \right)}$ is in $L^1 (\mathbb{R}^d)$ for a.e. $\omega \in \mathbb{R}^d$. Notice that under these circumstances the definition of $A(f,g)$ makes sense, in view of (\ref{eqAmbiguityFunction2}). Let $1  \le r,s \leq 2$ and assume that $0 < ||A(f,g)||_{L_{x,\omega}^{r,s}(\mathbb{R}^{2d})}< \infty$. Then $\widehat{f} , \widehat{g} \in L^u (\mathbb{R}^d)$ for every $0 < u \le r^{\prime}$. Moreover, for every pair $u,v$ such that $0 <u,v \le r^{\prime}$ and
\begin{equation}
\frac{1}{u}+\frac{1}{v} =   \frac{1}{s} + \frac{1}{r^{\prime}},
\label{eqBounds1}
\end{equation}
we have:
\begin{equation}
||A(f,g)||_{L_{x,\omega}^{r,s} (\mathbb{R}^{2d})} \ge B(r,s,u,v) ||\widehat{f}||_{L^u (\mathbb{R}^d)} ||\widehat{g}||_{L^v (\mathbb{R}^d)},
\label{eqBounds2}
\end{equation}
where
\begin{equation}
B(r,s,u,v) =C_{r^{\prime}}^d \left( \frac{C_{u/r^{\prime}} C_{v/r^{\prime}}}{C_{s/r^{\prime}}} \right)^{d/ r^{\prime}}.
\label{eqBounds3}
\end{equation}
The constant in (\ref{eqBounds2}) is sharp and for $1<r<2$, $0<u,v < r^{\prime}$, we have an equality if and only $\widehat{f},\widehat{g}$ are Gaussians of the form:
\begin{equation}
\begin{array}{l}
\widehat{f} (\omega) = \exp \left[ - \omega \cdot \left(|m^{\prime}| A + i B \right) \omega  + c \cdot \omega + \gamma \right] \\
\\
\widehat{g} (\omega) = \exp \left[ - \omega \cdot \left(|n^{\prime}| A - i B \right) \omega  + \widetilde{c} \cdot \omega +  \widetilde{\gamma} \right]
\end{array}
\label{eqBounds4}
\end{equation}
where $A$ is a real, symmetric, positive-definite $d \times d$ matrix, $B$ is a real, symmetric $d \times d$ matrix, $c,\widetilde{c} \in \mathbb{C}^d$, $\gamma, \widetilde{\gamma} \in \mathbb{C}$, and:
\begin{equation}
m^{\prime}= \frac{u(r-1)}{ur-u-r}, \hspace{1 cm} n^{\prime} =\frac{v(r-1)}{vr-v-r}.
\label{eqBounds5}
\end{equation}
Here $m^{\prime}$ and $n^{\prime}$ are the H\"older duals of $m=\frac{u}{r^{\prime}}$ and $n=\frac{v}{r^{\prime}}$, respectively.

\end{Theorem}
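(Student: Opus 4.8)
The plan is to split the estimate into the two sharp classical inequalities into which Lieb's one--dimensional argument naturally decomposes: the sharp Hausdorff--Young inequality, applied in the frequency variable, and the sharp \emph{reverse} Young convolution inequality of Brascamp--Lieb--Beckner, applied in the position variable. First I would use the symmetry (\ref{eqAmbiguityFunction2}), $A(f,g)(x,\omega)=A(\widehat f,\widehat g)(-\omega,x)$, to rewrite the left--hand side: a linear change of variables in the definition of the mixed norm turns $\|A(f,g)\|_{L^{r,s}_{x,\omega}}$ into $\|A(F,G)\|_{L^{r,s}_{\omega,x}}$, where $F=\widehat f$ and $G=\widehat g$; that is, into the norm in which one first integrates $|A(F,G)|^{r}$ against $d\omega$ and then takes the $s/r$--power against $dx$. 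Under the standing hypothesis the function $h_x(t):=F(t-\tfrac x2)\overline{G(t+\tfrac x2)}$ lies in $L^{1}(\mathbb{R}^{d})$ for a.e.\ $x$, and by definition $A(F,G)(x,\cdot)=\widehat{h_x}$.

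Next, for a.e.\ fixed $x$ I would apply the sharp Hausdorff--Young inequality to $\widehat{h_x}$. Writing $h_x=\mathcal F^{-1}\widehat{h_x}$ (Fourier inversion, valid since $h_x\in L^{1}$) and using $1\le r\le 2$, the inequality in this ``reverse'' form shows that $h_x\in L^{r'}$ as soon as $\widehat{h_x}\in L^{r}$ --- which holds for a.e.\ $x$ because the left--hand side of (\ref{eqBounds2}) is finite --- and gives
\[
\Big(\int_{\mathbb{R}^{d}}|A(F,G)(x,\omega)|^{r}\,d\omega\Big)^{1/r}=\|\mathcal F h_x\|_{L^{r}}\ \ge\ C_{r'}^{d}\,\|h_x\|_{L^{r'}},
\]
where one uses that the sharp $d$--dimensional Hausdorff--Young constant is $C_{r}^{d}=C_{r'}^{-d}$ (Babenko--Beckner). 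The equality case here is precisely that $h_x$ be a complex Gaussian with positive--definite real part, for a.e.\ $x$.

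Raising to the power $s$ and integrating in $x$, the substitution $u=t-\tfrac x2$ turns $\int\big(\int|h_x(t)|^{r'}dt\big)^{s/r'}dx$ into $\|\phi*\check\psi\|_{L^{s/r'}}^{s/r'}$, with $\phi=|F|^{r'}\ge 0$, $\psi=|G|^{r'}\ge 0$ and $\check\psi(y)=\psi(-y)$. Putting $m=u/r'$, $n=v/r'$, $q=s/r'$, the hypotheses $0<u,v\le r'$ and (\ref{eqBounds1}) say exactly that $0<m,n,q\le 1$ and $\tfrac1m+\tfrac1n=1+\tfrac1q$, so one is in the regime of the sharp reverse Young (reverse Brascamp--Lieb, Barthe) inequality, which yields
\[
\|\phi*\check\psi\|_{L^{q}}\ \ge\ \big(C_{m}C_{n}C_{q'}\big)^{d}\|\phi\|_{L^{m}}\|\psi\|_{L^{n}}=\Big(\tfrac{C_{u/r'}C_{v/r'}}{C_{s/r'}}\Big)^{d}\|F\|_{L^{u}}^{r'}\|G\|_{L^{v}}^{r'},
\]
using again $C_{q'}=C_{q}^{-1}$. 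Chaining the two displays and extracting the $s$--th root gives (\ref{eqBounds2}) with constant $C_{r'}^{d}\,(C_{u/r'}C_{v/r'}/C_{s/r'})^{d/r'}=B(r,s,u,v)$. Feeding the finiteness of the left--hand side backwards through these two genuine inequalities forces $\|\phi\|_{L^{m}},\|\psi\|_{L^{n}}<\infty$ --- neither factor can be infinite, since $\|A(f,g)\|>0$ prevents $F$ or $G$ from vanishing a.e. --- which gives the integrability of $\widehat f$ and $\widehat g$ claimed in the statement.

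Finally, for sharpness, the ``if'' direction is a direct Gaussian computation: substituting (\ref{eqBounds4}) one checks that $\widehat{h_x}$ is Gaussian for every $x$ (equality in the second step) and that $|F|^{r'},|G|^{r'}$ are the matched extremal Gaussians of the reverse Young inequality --- their covariances scaled by the negative conjugate exponents $m',n'$ of $m=u/r'$, $n=v/r'$, which is exactly (\ref{eqBounds5}) --- so that both sides of (\ref{eqBounds2}) agree and the constant is $B(r,s,u,v)$. For the ``only if'' direction, equality in (\ref{eqBounds2}) forces equality a.e.\ in both steps, and the equality cases of sharp Hausdorff--Young (Gaussians, Lieb) and of sharp reverse Young (matched Gaussians, Barthe / Brascamp--Lieb) must be reconciled. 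The hard part will be this last reconciliation and the attendant bookkeeping: showing that the two families of Gaussian extremizers are compatible, identifying the common matrix $A$ and the matrix $B$ of the imaginary part, checking that the scalings are forced to be $|m'|$ and $|n'|$ with $m',n'$ as in (\ref{eqBounds5}), and tracking the sharp constants through $C_{p}=C_{p'}^{-1}$ so that the product of the two optimal constants collapses to $B(r,s,u,v)$; a secondary technical nuisance is the measurability and $L^{1}$ housekeeping needed to run Hausdorff--Young slice by slice and to justify the backwards implication.
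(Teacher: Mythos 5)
Your proposal is correct and follows essentially the same route as the paper: a slice-wise application of the sharp reverse Hausdorff--Young inequality in one phase-space variable, followed by the observation that the resulting $x$-integral is $\| \, |\widehat f|^{r'} \!\star |\widehat g(-\cdot)|^{r'}\|_{L^{s/r'}}^{s/r'}$ and an application of the sharp reverse Young (Leindler/Brascamp--Lieb) inequality with $m=u/r'$, $n=v/r'$, $q=s/r'$; your constant bookkeeping via $C_pC_{p'}=1$ reproduces $B(r,s,u,v)$ exactly as in the paper. The one step you flag as ``the hard part'' and leave open --- reconciling the Gaussian equality cases of the two inequalities, which constrain $|\widehat f|,|\widehat g|$ and the sliced products $h_x$ separately but say nothing directly about the phases --- is resolved in the paper by a specific named tool: the extension of Cauchy's functional equation to quadratics (Theorem \ref{TheoremCauchy}, Lieb's Lemma 4 in dimension $d$). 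One sets $\xi=\widehat f/|\widehat f|$ and $\eta=\overline{\widehat g}/|\widehat g|$, notes that equality in Hausdorff--Young for a.e.\ slice forces $\xi(t+\tfrac\omega2)\eta(t-\tfrac\omega2)=\exp[i\mu(\omega)\cdot t+i\nu(\omega)]$ as in (\ref{eqCauchy1}), and concludes that the phases are $e^{\mp i\omega\cdot B\omega+\cdots}$ with a common real symmetric $B$, which combined with the matched real Gaussians from reverse Young yields (\ref{eqBounds4}).
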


From this theorem and its proof we obtain the following two corollaries:

\begin{Corollary}\label{CorollaryBoundsA}
Assume that $t \mapsto f \left(t - \frac{x}{2} \right) \overline{g \left(t + \frac{x}{2} \right)}$ is in $L^1 (\mathbb{R}^d)$ for a.e. $x \in \mathbb{R}^d$. Let $1  \le r,s \leq 2$ and assume that $0 < ||A(f,g)||_{L_{\omega,x}^{r,s}(\mathbb{R}^{2d})}< \infty$. Then $f ,g \in L^u (\mathbb{R}^d)$ for every $0 < u \le r^{\prime}$. Moreover, for every pair $u,v$ such that $0 <u,v \le r^{\prime}$ and (\ref{eqBounds1}) holds, we have:
\begin{equation}
||A(f,g)||_{L_{\omega,x}^{r,s} (\mathbb{R}^{2d})} \ge B(r,s,u,v) ||f||_{L^u (\mathbb{R}^d)} ||g||_{L^v (\mathbb{R}^d)},
\label{eqCorollaryBoundsA2}
\end{equation}
where
\begin{equation}
B(r,s,u,v) =C_{r^{\prime}}^d \left( \frac{C_{u/r^{\prime}} C_{v/r^{\prime}}}{C_{s/r^{\prime}}} \right)^{d/ r^{\prime}}.
\label{eqCorollaryBoundsA3}
\end{equation}
The constant in (\ref{eqCorollaryBoundsA2}) is sharp and for $1 < s < 2$ and $0<u,v < r^{\prime}$, we have an equality if and only $f,g$ are matched Gaussians of the form (\ref{eqBounds4},\ref{eqBounds5}) with $\widehat{f},\widehat{g}$ replaced by $f,g$, respectively and $\omega$ replaced by $x$.
\end{Corollary}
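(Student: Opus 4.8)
The plan is to deduce Corollary~\ref{CorollaryBoundsA} from Theorem~\ref{Bounds} by applying the latter to the Fourier pair $(\widehat{f},\widehat{g})$ and transporting the conclusion back through the covariance identity (\ref{eqAmbiguityFunction2}). The first step is the norm identity
\[
\|A(f,g)\|_{L^{r,s}_{\omega,x}(\mathbb{R}^{2d})}=\|A(\widehat{f},\widehat{g})\|_{L^{r,s}_{x,\omega}(\mathbb{R}^{2d})}.
\]
Indeed, (\ref{eqAmbiguityFunction2}) gives $|A(f,g)(x,\omega)|=|A(\widehat{f},\widehat{g})(-\omega,x)|$; inserting this into the definition (\ref{eqLieb4.5}) of $\|\cdot\|_{L^{r,s}_{\omega,x}}$, using Tonelli (the integrand being nonnegative and measurable) and the measure-preserving change of variables $\omega\mapsto-\omega$ in the inner integral, one recognises exactly the mixed norm (\ref{eqLieb4.4}) of $A(\widehat{f},\widehat{g})$ with the \emph{same} pair of exponents $(r,s)$: the passage from $A(f,g)$ to $A(\widehat{f},\widehat{g})$ interchanges the two $\mathbb{R}^d$-variables but does not permute $r$ and $s$. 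In particular $0<\|A(f,g)\|_{L^{r,s}_{\omega,x}}<\infty$ holds if and only if $0<\|A(\widehat{f},\widehat{g})\|_{L^{r,s}_{x,\omega}}<\infty$.

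Next I would check that the integrability hypotheses match. Writing $h^{-}(t):=h(-t)$, Fourier inversion for the convention (\ref{eqNotation1}) gives $\widehat{\widehat{f}}=f^{-}$ and $\widehat{\widehat{g}}=g^{-}$, hence
\[
\widehat{\widehat{f}}\!\left(t+\tfrac{\omega}{2}\right)\overline{\widehat{\widehat{g}}\!\left(t-\tfrac{\omega}{2}\right)}=f\!\left(-t-\tfrac{\omega}{2}\right)\overline{g\!\left(-t+\tfrac{\omega}{2}\right)},
\]
which after the substitution $t\mapsto-t$ is the function $t\mapsto f\!\left(t-\tfrac{\omega}{2}\right)\overline{g\!\left(t+\tfrac{\omega}{2}\right)}$. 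Thus the hypothesis of Corollary~\ref{CorollaryBoundsA} that $t\mapsto f\!\left(t-\tfrac{x}{2}\right)\overline{g\!\left(t+\tfrac{x}{2}\right)}$ lies in $L^1(\mathbb{R}^d)$ for a.e.\ $x$ is precisely the hypothesis of Theorem~\ref{Bounds} applied to $(\widehat{f},\widehat{g})$. Invoking Theorem~\ref{Bounds} for $(\widehat{f},\widehat{g})$ then gives $\widehat{\widehat{f}},\widehat{\widehat{g}}\in L^u(\mathbb{R}^d)$ for all $0<u\le r'$ --- equivalently $f,g\in L^u(\mathbb{R}^d)$, since $\|h^{-}\|_{L^u}=\|h\|_{L^u}$ --- together with
\[
\|A(\widehat{f},\widehat{g})\|_{L^{r,s}_{x,\omega}(\mathbb{R}^{2d})}\ge B(r,s,u,v)\,\|\widehat{\widehat{f}}\|_{L^u(\mathbb{R}^d)}\,\|\widehat{\widehat{g}}\|_{L^v(\mathbb{R}^d)}=B(r,s,u,v)\,\|f\|_{L^u(\mathbb{R}^d)}\,\|g\|_{L^v(\mathbb{R}^d)}
\]
for every pair $u,v$ with $0<u,v\le r'$ and $\tfrac1u+\tfrac1v=\tfrac1s+\tfrac1{r'}$, where $B(r,s,u,v)$ is as in (\ref{eqBounds3}). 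Combining this with the norm identity of the first step yields (\ref{eqCorollaryBoundsA2}). Sharpness is inherited the same way: any improvement of $B(r,s,u,v)$ in (\ref{eqCorollaryBoundsA2}) would, via the norm identity, improve the constant in (\ref{eqBounds2}), contradicting Theorem~\ref{Bounds}.

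Finally, for the equality statement I would push the characterisation in Theorem~\ref{Bounds} through $\widehat{\widehat{f}}=f^{-}$, $\widehat{\widehat{g}}=g^{-}$: in the exponent range for which Theorem~\ref{Bounds} records an equality case (with $0<u,v<r'$), equality forces $\widehat{\widehat{f}}$ and $\widehat{\widehat{g}}$ to be matched Gaussians of the form (\ref{eqBounds4}) with parameters (\ref{eqBounds5}); since $\widehat{\widehat{f}}(x)=f(-x)$, $\widehat{\widehat{g}}(x)=g(-x)$, and the map $x\mapsto-x$ sends a Gaussian of that form to another one (the quadratic part is even, while the linear coefficients $c,\widetilde{c}$ merely change sign and remain arbitrary in $\mathbb{C}^d$), it follows that $f,g$ are matched Gaussians of the stated form in the variable $x$. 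I do not expect a genuine obstacle here: the whole argument is a transfer along (\ref{eqAmbiguityFunction2}), and the only point requiring real care is the bookkeeping of the mixed-norm exponents and of the sign/reflection conventions --- in particular verifying that $A(f,g)\mapsto A(\widehat{f},\widehat{g})$ exchanges $L^{r,s}_{\omega,x}$ with $L^{r,s}_{x,\omega}$ \emph{without} permuting $r$ and $s$, and that the two $L^1$-hypotheses coincide after the reflection $t\mapsto-t$.
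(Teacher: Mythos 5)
Your argument is correct in substance but takes a genuinely different route from the paper's. The paper disposes of Corollary \ref{CorollaryBoundsA} by re-running the proof of Theorem \ref{Bounds} with the roles of the two phase-space variables exchanged: for fixed $x$ the map $\omega\mapsto A(f,g)(x,\omega)$ is the Fourier transform of the $L^1$ function $t\mapsto f\left(t-\frac{x}{2}\right)\overline{g\left(t+\frac{x}{2}\right)}$, so one applies the converse Hausdorff--Young inequality to the inner $\omega$-integral of $\|\cdot\|_{L^{r,s}_{\omega,x}}$ and Leindler's inequality to the outer $x$-integral, never touching $\widehat{f}$ or $\widehat{g}$. You instead deduce the corollary from the theorem via the covariance identity (\ref{eqAmbiguityFunction2}); your bookkeeping is right --- the identity $\|A(f,g)\|_{L^{r,s}_{\omega,x}}=\|A(\widehat{f},\widehat{g})\|_{L^{r,s}_{x,\omega}}$ does hold with the same pair $(r,s)$, the reflection $t\mapsto -t$ matches the two $L^1$ hypotheses, and the paper itself uses exactly this transfer one corollary later in (\ref{eqCorollaryBounds4}). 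The one real issue is that your first move applies Theorem \ref{Bounds} to the pair $(\widehat{f},\widehat{g})$ and invokes $\widehat{\widehat{f}}=f^{-}$, whereas the hypotheses of Corollary \ref{CorollaryBoundsA} say nothing about $f,g$ being tempered distributions or about $\widehat{f},\widehat{g}$ existing; that $f,g\in L^2$ (which would legitimize all of this) is part of the \emph{conclusion}, so as written there is a small circularity. It is repaired by noting that the proof of Theorem \ref{Bounds} never uses that $\widehat{f},\widehat{g}$ are Fourier transforms of anything, so the abstract form of the theorem can be applied directly to the pair $(f^{-},g^{-})$ --- which is essentially the paper's direct route.

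A second, smaller point: Theorem \ref{Bounds} characterizes equality for $1<r<2$ and $0<u,v<r^{\prime}$, so your transfer (and equally the paper's direct argument, where Hausdorff--Young is again applied with exponent $r$ on the inner integral) yields the equality case of (\ref{eqCorollaryBoundsA2}) under $1<r<2$, whereas the corollary as printed says $1<s<2$. This looks like a typo in the statement rather than a defect of your proof, but you should state explicitly which range your argument actually covers instead of leaving it implicit.
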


The proof of this corollary follows exactly the same steps of the proof of Theorem \ref{Bounds} (see section 3.2).

\begin{Corollary}\label{CorollaryBounds}
Let $1\leq r,s \leq 2$; $ 1 < p, q < \infty$ and $g \in \mathcal{S}(\mathbb{R}^d) \backslash \left\{0 \right\}$. Assume that $f$ is such that $0 < ||f||_{g, M^{r,s} (\mathbb{R}^d)} < \infty$ and $||f||_{L^p (\mathbb{R}^d)} < \infty$, $||\widehat{f}||_{L^q (\mathbb{R}^d)} < \infty$. Then $f, \widehat{f} \in L^u (\mathbb{R}^d)$ for every $0 < u \le r^{\prime}$. Moreover, for every $0 < v \le r^{\prime}$, such that (\ref{eqBounds1}) holds, we have:
\begin{equation}
||\widehat{f}||_{L^u (\mathbb{R}^d)} \le \frac{||f||_{g, M^{r,s} (\mathbb{R}^d)}}{B(r,s,u,v) || \widehat{g}||_{L^v (\mathbb{R}^d)}}
\label{eqCorollaryBounds2}
\end{equation}
and
\begin{equation}
||f||_{L^u (\mathbb{R}^d)} \le \frac{||f||_{g, M ^{r,s} (\mathbb{R}^d)}}{B(r,s,u,v) || g||_{L^v (\mathbb{R}^d)}}
\label{eqCorollaryBounds2.1}
\end{equation}

where $B(r,s,u,v)$ is given by (\ref{eqBounds3}).
\end{Corollary}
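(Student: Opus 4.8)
The plan is to deduce this corollary directly from Corollary \ref{CorollaryBoundsA} (and from Theorem \ref{Bounds} for the Fourier side) by recognizing the modulation-space norm $\|f\|_{g,M^{r,s}(\mathbb{R}^d)}$ as, up to the elementary phase and reflection relabelling in \eqref{eqAmbiguityFunction3}, a mixed-norm of the ambiguity function $A(f,g)$. First I would note that \eqref{eqAmbiguityFunction3} gives $|A(f,g)(x,\omega)| = |V_gf(-x,\omega)|$, so after the change of variables $x\mapsto -x$ (which has unit Jacobian and leaves the order of integration untouched) we get $\|A(f,g)\|_{L^{r,s}_{\omega,x}(\mathbb{R}^{2d})} = \|V_gf\|_{L^{r,s}_{\omega,x}}$, and similarly, pairing \eqref{eqAmbiguityFunction2} with \eqref{eqAmbiguityFunction3}, $\|A(f,g)\|_{L^{r,s}_{x,\omega}} = \|A(\widehat f,\widehat g)\|_{L^{r,s}_{\omega,x}} = \|V_{\widehat g}\widehat f\|_{L^{r,s}_{\omega,x}}$. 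The hypotheses $0<\|f\|_{g,M^{r,s}}<\infty$ and $\|f\|_{L^p}<\infty$, $\|\widehat f\|_{L^q}<\infty$ with $1<p,q<\infty$ ensure, via the STFT/ambiguity-function identities and the integrability of the relevant tensor products (this is where I would invoke that $f\in L^p$ forces $t\mapsto f(t-\tfrac x2)\overline{g(t+\tfrac x2)}\in L^1$ for a.e.\ $x$, since $g\in\mathcal S$ decays rapidly and $L^p_{loc}\subset L^1_{loc}$ locally plus the Schwartz decay handles the tails), that the integrability assumptions of Corollary \ref{CorollaryBoundsA} and Theorem \ref{Bounds} are met with the ambiguity function taking the place of $A(f,g)$ for the pairs $(f,g)$ and $(\widehat f,\widehat g)$ respectively.

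Having set this up, the conclusion $f,\widehat f\in L^u(\mathbb{R}^d)$ for every $0<u\le r'$ is then immediate from the corresponding conclusions of Corollary \ref{CorollaryBoundsA} (for $f$, using $\|A(f,g)\|_{L^{r,s}_{\omega,x}}=\|f\|_{g,M^{r,s}}<\infty$) and of Theorem \ref{Bounds} (for $\widehat f$, using $\|A(f,g)\|_{L^{r,s}_{x,\omega}}=\|V_{\widehat g}\widehat f\|_{L^{r,s}_{\omega,x}}$, which equals $\|\widehat f\|_{\widehat g,M^{r,s}}$; one checks this is finite because it is comparable to $\|f\|_{g,M^{r,s}}$ by the window-change equivalence of modulation-space norms, both windows being Schwartz). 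Next, applying inequality \eqref{eqBounds2} of Theorem \ref{Bounds} to the pair $(f,g)$ gives, for every $v$ with $0<v\le r'$ and $\tfrac1u+\tfrac1v=\tfrac1s+\tfrac1{r'}$,
\begin{equation}
\|f\|_{g,M^{r,s}(\mathbb{R}^d)} = \|A(f,g)\|_{L^{r,s}_{x,\omega}(\mathbb{R}^{2d})} \ge B(r,s,u,v)\,\|\widehat f\|_{L^u(\mathbb{R}^d)}\,\|\widehat g\|_{L^v(\mathbb{R}^d)},
\end{equation}
which rearranges to \eqref{eqCorollaryBounds2}; applying Corollary \ref{CorollaryBoundsA} to the same pair $(f,g)$ gives $\|f\|_{g,M^{r,s}} = \|A(f,g)\|_{L^{r,s}_{\omega,x}} \ge B(r,s,u,v)\,\|f\|_{L^u}\,\|g\|_{L^v}$, which rearranges to \eqref{eqCorollaryBounds2.1}. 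The constant $B(r,s,u,v)$ is exactly the one in \eqref{eqBounds3}, so nothing new needs to be computed.

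The main obstacle I anticipate is purely bookkeeping rather than conceptual: one must be careful that the $L^1$-integrability hypotheses in Theorem \ref{Bounds} and Corollary \ref{CorollaryBoundsA} — stated there as conditions on $t\mapsto \widehat f(t+\tfrac\omega2)\overline{\widehat g(t-\tfrac\omega2)}$ and on $t\mapsto f(t-\tfrac x2)\overline{g(t+\tfrac x2)}$ respectively — genuinely follow here from $f\in L^p$, $\widehat f\in L^q$ with $1<p,q<\infty$ together with $g\in\mathcal S$; the Schwartz decay of $g$ (hence of $\widehat g$) combined with Hölder on compact sets and rapid decay on the complement does the job, but it should be spelled out. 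A secondary point to verify carefully is that the positivity $\|A(f,g)\|>0$ needed to invoke the reverse inequalities is inherited from $0<\|f\|_{g,M^{r,s}}$, and that the window-change lemma legitimately lets us pass between $\|f\|_{g,M^{r,s}}$ and $\|\widehat f\|_{\widehat g,M^{r,s}}$ so that the finiteness hypothesis of Theorem \ref{Bounds} applied to $(f,g)$ (equivalently $(\widehat f,\widehat g)$ after \eqref{eqAmbiguityFunction2}) is satisfied. Once these routine integrability checks are in place, the rest is an immediate unwinding of definitions.
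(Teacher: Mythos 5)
Your treatment of \eqref{eqCorollaryBounds2} is correct and is essentially the paper's argument: $\|f\|_{g,M^{r,s}(\mathbb{R}^d)}=\|A(f,g)\|_{L^{r,s}_{x,\omega}(\mathbb{R}^{2d})}$, the $L^1$ hypothesis of Theorem~\ref{Bounds} follows at once from H\"older's inequality applied to $\widehat f\in L^q$ and $\widehat g\in L^{q'}$ (the local/tail decomposition you sketch is unnecessary), and Theorem~\ref{Bounds} then yields the lower bound $B(r,s,u,v)\|\widehat f\|_{L^u}\|\widehat g\|_{L^v}$.

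The derivation of \eqref{eqCorollaryBounds2.1} (and of the claim $f\in L^u$) contains a genuine gap: you assert $\|A(f,g)\|_{L^{r,s}_{\omega,x}}=\|f\|_{g,M^{r,s}}$, and this is false unless $r=s$. By \eqref{eqModulationSpace1} the modulation norm is $\|V_gf\|_{L^{r,s}_{x,\omega}}$ --- inner integral in $x$ with exponent $r$, outer in $\omega$ with exponent $s$ --- whereas $L^{r,s}_{\omega,x}$ integrates in the opposite order. Your own first identity correctly gives $\|A(f,g)\|_{L^{r,s}_{\omega,x}}=\|V_gf\|_{L^{r,s}_{\omega,x}}$, which is the Wiener amalgam norm of $f$, not its modulation norm; the same conflation recurs when you equate $\|V_{\widehat g}\widehat f\|_{L^{r,s}_{\omega,x}}$ with $\|\widehat f\|_{\widehat g,M^{r,s}}$. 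Consequently Corollary~\ref{CorollaryBoundsA} applied to the pair $(f,g)$ bounds the amalgam norm, not $\|f\|_{g,M^{r,s}}$, from below by $B(r,s,u,v)\|f\|_{L^u}\|g\|_{L^v}$, and \eqref{eqCorollaryBounds2.1} does not follow from your chain of identities. (A secondary problem: the change-of-window equivalence you invoke holds only up to unspecified constants, so it cannot be used in a statement where the sharp constant $B(r,s,u,v)$ is claimed.) The paper proceeds differently: it uses the exact covariance $A(f,g)(x,\omega)=A(\widehat f,\widehat g)(-\omega,x)$ from \eqref{eqAmbiguityFunction2} to obtain the isometry $\|f\|_{g,M^{r,s}}=\|A(f,g)\|_{L^{r,s}_{x,\omega}}=\|A(\widehat f,\widehat g)\|_{L^{r,s}_{\omega,x}}$ (eq.~\eqref{eqCorollaryBounds4}) and then applies Corollary~\ref{CorollaryBoundsA} to the transformed pair $(\widehat f,\widehat g)$; even on that route one must track carefully which pair of functions appears on the right-hand side of the reverse inequality. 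In any event, the step in which you identify the two mixed norms is where your argument breaks for $r\neq s$; for $r=s$ both mixed norms coincide with $\|A(f,g)\|_{L^r(\mathbb{R}^{2d})}$ and your argument goes through.
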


\begin{proof}
By assumption, we have $\widehat{f} \in L^q (\mathbb{R}^d)$ and $\widehat{g} \in L^{q^{\prime}} (\mathbb{R}^d)$. It follows by H\"older's inequality that the function $t \mapsto \widehat{f} \left( t + \frac{\omega}{2} \right) \overline{\widehat{g} \left( t - \frac{\omega}{2} \right)}$ is in $L^1 (\mathbb{R}^d)$ for a.e. $\omega \in \mathbb{R}^d$. Also, from (\ref{eqAmbiguityFunction3}) we have
\begin{equation}
\begin{array}{c}
\infty > ||f||_{g, M^{r,s} (\mathbb{R}^d)} = \\
\\
= \left(\int_{\mathbb{R}^d}  \left( \int_{\mathbb{R}^d} | V_g f(x, \omega)|^r  dx \right)^{s/r}~  d \omega \right)^{1/s} = \\
\\
=  ||V_g f||_{L_{x,\omega}^{r,s} (\mathbb{R}^{2d})}  = ||A(f,g)||_{L_{x, \omega}^{r,s} (\mathbb{R}^{2d})}
\end{array}
\label{eqCorollaryBounds3}
\end{equation}
By Theorem \ref{Bounds} we obtain (\ref{eqCorollaryBounds2}). Eq. (\ref{eqCorollaryBounds2.1}) is an immediate consequence of Corollary \ref{CorollaryBoundsA} and the fact that (cf.(\ref{eqAmbiguityFunction2})):
\begin{equation}
||A(f,g)||_{L_{x, \omega}^{r,s} (\mathbb{R}^{2d})} = ||A(\widehat{f},\widehat{g})||_{L_{\omega,x}^{r,s} (\mathbb{R}^{2d})} .
\label{eqCorollaryBounds4}
\end{equation}
\end{proof}

To prove Theorem \ref{Bounds} we shall need to recapitulate some classical theorems in harmonic analysis.

\subsection{Some results in harmonic analysis}

\vspace{0.3 cm}
\noindent
The first result is the Hausdorff-Young inequality:

\begin{Theorem}[Hausdorff-Young]\label{TheoremHausdorffYoung}
Let $r \in \left[1,2 \right]$. If $f \in L^{r} (\mathbb{R}^d)$, then  $\widehat{f} \in L^{r^{\prime}} (\mathbb{R}^d)$, and:
\begin{equation}
\|\widehat{f}\|_{L^{r^{\prime}} (\mathbb{R}^d)} \leq C_{r}^d \|f\|_{L^{r} (\mathbb{R}^d)},
\label{eqHausdorffYoung1}
\end{equation}
where $C_r$ is the Babenko-Beckner constant (\ref{eqLieb4.2},\ref{eqLieb4.3}). There is obviously a converse result. Suppose that $\widehat{f}$ exists and $\widehat{f} \in L^r (\mathbb{R}^d)$. Since $\left(\widehat{f}\right)^{\widehat{}}(x) = f(-x)$ then $f \in L^{r^{\prime}} (\mathbb{R}^d)$ and
\begin{equation}
\|f\|_{L^{r^{\prime}} (\mathbb{R}^d)} \leq C_{r}^d \|\widehat{f}\|_{L^{r} (\mathbb{R}^d)}.
\label{eqHausdorffYoung2}
\end{equation}
Equality is achieved in (\ref{eqHausdorffYoung1}) and in (\ref{eqHausdorffYoung2}) when $1 < r <2$ if and only if $f$ is a Gaussian of the form
\begin{equation}
f(x)=\gamma \exp \left( - x\cdot A x + c \cdot x \right),
\label{eqHausdorffYoung3}
\end{equation}
with $ \gamma \in \mathbb{C}$, $A$ is any real, symmetric, positive-definite $d \times d$ matrix and $c \in \mathbb{C}^d$. For $r=1$ we can obtain an equality for many different functions. For $r=r^{\prime}=2$ an equality holds for all functions (Plancherel's Theorem).
\end{Theorem}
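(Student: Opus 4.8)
The plan is to deduce this from classical results, since no new ideas are needed here. First I would record the non-sharp form of \eqref{eqHausdorffYoung1}: Riesz--Thorin interpolation between the endpoint bounds $\|\widehat{f}\|_{L^\infty(\mathbb{R}^d)}\le\|f\|_{L^1(\mathbb{R}^d)}$ — immediate from the definition \eqref{eqNotation1} — and the Plancherel identity $\|\widehat{f}\|_{L^2(\mathbb{R}^d)}=\|f\|_{L^2(\mathbb{R}^d)}$ yields \eqref{eqHausdorffYoung1} for every $r\in[1,2]$, but only with constant $1$. To replace $1$ by the optimal value $C_r^d$ I would invoke the Babenko--Beckner theorem \cite{Babenko,Beckner}: Babenko settled the case in which $r'$ is an even integer, and Beckner proved the general case $1<r<2$ in arbitrary dimension. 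The $d$-th power in $C_r^d$ reflects that the Fourier transform on $\mathbb{R}^d$ is the tensor product of $d$ one-dimensional Fourier transforms and that, as the next step uses, the extremisers factor over coordinates.

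Next, for the equality statement when $1<r<2$ I would appeal to the characterisation of extremisers, due to Lieb \cite{Lieb} (see also \cite{Beckner}): equality in \eqref{eqHausdorffYoung1} forces $f$ to be a Gaussian of the shape \eqref{eqHausdorffYoung3}, and conversely the explicit evaluation $\int_{\mathbb{R}^d}\exp(-x\cdot Ax+c\cdot x)\,dx=\pi^{d/2}(\det A)^{-1/2}\exp(\tfrac14\,c\cdot A^{-1}c)$, applied to $f$ and to $\widehat{f}$, shows that every such $f$ saturates the bound. The two borderline cases follow at once from the endpoint estimates above: for $r=1$ the inequality $|\widehat{f}(\omega)|\le\|f\|_{L^1(\mathbb{R}^d)}$ becomes an equality at a point for every $f$ of the form $e^{2\pi i x\cdot\omega_0}h(x)$ with $h\ge0$, hence for a large family of functions; and for $r=r'=2$ Plancherel gives equality for all $f\in L^2(\mathbb{R}^d)$.

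Finally, the converse inequality \eqref{eqHausdorffYoung2} needs no separate argument: if $\widehat{f}\in L^r(\mathbb{R}^d)$ with $r\in[1,2]$, apply \eqref{eqHausdorffYoung1} with $f$ replaced by $\widehat{f}$ to obtain $\big\|\left(\widehat{f}\right)^{\widehat{}}\big\|_{L^{r'}(\mathbb{R}^d)}\le C_r^d\|\widehat{f}\|_{L^r(\mathbb{R}^d)}$; by the inversion formula $\left(\widehat{f}\right)^{\widehat{}}(x)=f(-x)$ recalled in the statement, together with the reflection invariance of the Lebesgue norm, the left-hand side equals $\|f\|_{L^{r'}(\mathbb{R}^d)}$. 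The equality case transfers in the same way, since the reflection $x\mapsto-x$ maps the Gaussian class \eqref{eqHausdorffYoung3} onto itself.

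The only genuinely deep ingredient is the optimal constant, i.e. Beckner's theorem; obtaining it from scratch — through Beckner's sharp two-point inequality combined with a central-limit/tensorisation argument, or through Lieb's heat-semigroup (Mehler kernel) method together with the fact that Gaussian kernels have only Gaussian maximisers — is far beyond a routine computation, and this is precisely the point we delegate to the literature. Everything else is interpolation, Fourier inversion, and a Gaussian integral.
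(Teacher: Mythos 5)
Your proposal is correct and takes the same route as the paper, which offers no proof of this classical theorem but simply attributes the sharp constant to Babenko and Beckner and the equality criterion to Lieb --- exactly the reduction you carry out, with the converse inequality and the endpoint cases $r=1$, $r=2$ handled by the same elementary observations. One small correction: the characterisation of extremisers is from Lieb's paper \cite{Lieb2} (\emph{Gaussian kernels have only Gaussian maximizers}), not from \cite{Lieb}, which is the radar-ambiguity paper.
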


The sharp constant in (\ref{eqHausdorffYoung1}) is due to Beckner \cite{Beckner} and under some more restrictive conditions to Babenko \cite{Babenko}. The criterion for equality was obtained by Lieb \cite{Lieb2}.

The second inequality is due to Young. It applies to the convolution of two functions:
\begin{equation}
(f \star g)(x) = \int_{\mathbb{R}^d} f(x-y) g(y) dy.
\label{eqconvolution}
\end{equation}

\begin{Theorem}[Young]\label{TheoremYoung}
Let $1 \le m,n,r \le \infty$ with $\frac{1}{m}+ \frac{1}{n} = 1+ \frac{1}{r}$. If $f \in L^m (\mathbb{R}^d)$ and $g \in L^n (\mathbb{R}^d)$, then $f \star g \in L^r (\mathbb{R}^d)$ and
\begin{equation}
||f \star g||_{L^r (\mathbb{R}^d)} \le \left( \frac{C_m C_n}{C_r} \right)^d ||f||_{L^m (\mathbb{R}^d)} ||g||_{L^n (\mathbb{R}^d)}.
\label{eqYoung1}
\end{equation}
Here $C_m, C_r$, \textit{etc} denote the Babenko-Beckner constant as defined in (\ref{eqLieb4.2},\ref{eqLieb4.3}). Equality in (\ref{eqYoung1}) holds for $m,n >1$ if and only if
\begin{equation}
\begin{array}{l}
f(x)=\gamma \exp \left[ - m^{\prime} (x- \overline{x}) \cdot A (x-\overline{x}) - i k \cdot x \right]\\
\\
g(x)=\widetilde{\gamma} \exp \left[ - n^{\prime} (x- \tilde{x}) \cdot A (x-\tilde{x}) + i k \cdot x \right]
\end{array}
\label{eqYoung2}
\end{equation}
where $\gamma, \widetilde{\gamma} \in \mathbb{C}$, $\overline{x},\tilde{x} ,k \in \mathbb{R}^d$ and $A$ is any real, symmetric, positive-definite $d \times d$ matrix.
\end{Theorem}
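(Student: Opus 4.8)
The plan is to separate the elementary part of the statement from its sharp-constant core. The mere membership $f\star g\in L^r(\mathbb{R}^d)$ together with a constant $1$ in (\ref{eqYoung1}) is classical (it follows from a three-fold application of H\"older's inequality to $|f(x-y)g(y)|=|f|^{m/r}|g|^{n/r}\,|f|^{1-m/r}\,|g|^{1-n/r}$), so the whole difficulty is the Babenko--Beckner constant $(C_mC_n/C_r)^d$, which is the content of the Beckner--Brascamp--Lieb theorem. For the sub-range $1\le m,n\le 2\le r$ I would deduce the sharp constant directly from the sharp Hausdorff--Young inequality (Theorem \ref{TheoremHausdorffYoung}) already at our disposal: since $\widehat{f\star g}=\widehat f\,\widehat g$, the converse form (\ref{eqHausdorffYoung2}) applied with exponent $r^{\prime}\le 2$ gives $\|f\star g\|_{L^r}\le C_{r^{\prime}}^{\,d}\|\widehat f\,\widehat g\|_{L^{r^{\prime}}}$; H\"older's inequality with $\tfrac1{r^{\prime}}=\tfrac1{m^{\prime}}+\tfrac1{n^{\prime}}$ (which is exactly the hypothesis $\tfrac1m+\tfrac1n=1+\tfrac1r$) yields $\|\widehat f\,\widehat g\|_{L^{r^{\prime}}}\le\|\widehat f\|_{L^{m^{\prime}}}\|\widehat g\|_{L^{n^{\prime}}}$; and two more applications of (\ref{eqHausdorffYoung1}) give $\|\widehat f\|_{L^{m^{\prime}}}\le C_m^d\|f\|_{L^m}$ and $\|\widehat g\|_{L^{n^{\prime}}}\le C_n^d\|g\|_{L^n}$. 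The resulting constant is $C_{r^{\prime}}^dC_m^dC_n^d=(C_mC_n/C_r)^d$, because a direct check from (\ref{eqLieb4.2}) gives $C_{r^{\prime}}=1/C_r$.

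The equality analysis in this sub-range then falls out of the same chain when $1<m,n<2<r<\infty$. Equality forces equality in every link: Lieb's criterion in Theorem \ref{TheoremHausdorffYoung} makes $f$ and $g$ Gaussians of the form (\ref{eqHausdorffYoung3}), say $f=\gamma\,e^{-x\cdot A_1x+c_1\cdot x}$ and $g=\widetilde\gamma\,e^{-x\cdot A_2x+c_2\cdot x}$ with $A_1,A_2$ real symmetric positive-definite, while the converse equality for $f\star g$ is automatic since a convolution of Gaussians is Gaussian. The only remaining constraint is the H\"older equality $|\widehat f|^{m^{\prime}}\propto|\widehat g|^{n^{\prime}}$ a.e., which for two Gaussians forces their moduli to be \emph{concentric} with proportional quadratic forms. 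Matching the quadratic parts gives $A_1=m^{\prime}A$ and $A_2=n^{\prime}A$ for a common real symmetric positive-definite $A$, and matching the frequency centres couples the two modulations into the $\mp ik\cdot x$ pattern; the position centres, which affect only the phases of $\widehat f,\widehat g$ and hence drop out of the moduli, stay free and give the independent shifts $\overline x,\widetilde x$. This reproduces precisely (\ref{eqYoung2}) with coefficients $m^{\prime},n^{\prime}$.

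It remains to treat the complementary exponents (for instance $r<2$, or $\max(m,n)>2$), where Hausdorff--Young is unavailable; this is the genuinely hard part and here I would follow Brascamp--Lieb. Dualizing $L^r$, the inequality is equivalent to bounding the symmetric trilinear form $I(f,g,h)=\iint_{\mathbb{R}^{2d}}f(u)\,g(y)\,h(u+y)\,du\,dy$ for nonnegative $f,g,h$ under $\tfrac1m+\tfrac1n+\tfrac1{r^{\prime}}=2$. After the substitution $y\mapsto -y$ this is in Riesz form, so the Brascamp--Lieb--Luttinger rearrangement inequality gives $I(f,g,h)\le I(f^{\ast},g^{\ast},h^{\ast})$ while preserving all $L^p$ norms; one may therefore assume $f,g,h$ radially symmetric and decreasing, and the sharp constant together with the extremizers is then pinned down by Lieb's principle that such convolution-type forms admit only Gaussian maximizers, reducing everything to an explicit Gaussian optimization that again returns $(C_mC_n/C_r)^d$ and the family (\ref{eqYoung2}). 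The main obstacle is exactly this equality statement in full generality: the value of the constant is comparatively robust, but excluding non-Gaussian extremizers requires the sharp \emph{equality} conditions for the rearrangement inequality (Burchard's theorem) to certify that an extremal triple is a genuine translate/modulate of a radial Gaussian, after which the Gaussian optimization fixes $A$, the coefficients $m^{\prime},n^{\prime}$, and the matched modulations.
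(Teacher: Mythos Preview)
The paper does not actually prove Theorem~\ref{TheoremYoung}; it is stated as a classical result, with the sharp constant attributed to Beckner and to Brascamp--Lieb and no further argument given. Your proposal therefore goes considerably beyond what the paper does.

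Your Hausdorff--Young route for the sub-range $1\le m,n\le 2\le r$ is correct and is in fact Beckner's own argument: sharp Hausdorff--Young and sharp Young are equivalent in this range, and the chain $\|f\star g\|_{L^r}\le C_{r'}^d\|\widehat f\,\widehat g\|_{L^{r'}}\le C_{r'}^dC_m^dC_n^d\|f\|_{L^m}\|g\|_{L^n}$ together with $C_{r'}C_r=1$ is exactly right. The equality analysis via Lieb's criterion for Hausdorff--Young plus the H\"older equality condition is also sound in that sub-range. For the complementary exponents you correctly identify rearrangement and the Brascamp--Lieb Gaussian-maximizer principle as the required input; at that point you are essentially invoking the same references the paper cites, so there is no genuine difference in depth there.

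One small point worth making explicit: your direct equality argument is confined to $1<m,n<2$, whereas the statement asserts the characterization for all $m,n>1$ (e.g.\ $m=3$, $n=6/5$, $r=6$ is admissible but falls outside the Hausdorff--Young window). So even the equality case ultimately requires the Brascamp--Lieb machinery, as you acknowledge in your final paragraph.
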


The sharp constant in Young's inequality was obtained independently by Beckner \cite{Beckner} and by Brascamp and Lieb \cite{Lieb3}.

The reverse inequality was first obtained by Leindler \cite{Leindler}. Its sharp version was derived by Brascamp and Lieb \cite{Lieb3}. A simple proof can be found in \cite{Barthe}.

\begin{Theorem}[Leindler]\label{TheoremLeindler}
Let $f$ and $g$ be non-negative, real-valued functions on $\mathbb{R}^d$ that are not identically zero and assume that $f \star g \in L^r  (\mathbb{R}^d)$ for $0 < r \le 1$. Let $0 <m,n \le 1$ be such that $\frac{1}{m}+\frac{1}{n}= 1+ \frac{1}{r}$. Then $ f\in L^m  (\mathbb{R}^d)$ and $ f\in L^n  (\mathbb{R}^d)$, and we have:
\begin{equation}
||f\star g||_{L^r  (\mathbb{R}^d)} \ge \left( \frac{C_m C_n}{C_r} \right)^d ||f||_{L^m  (\mathbb{R}^d)} ||g||_{L^n  (\mathbb{R}^d)}.
\label{eqLeindler1}
\end{equation}
Equality holds in (\ref{eqLeindler1}) when $0 <m,n<1$, if and only if
\begin{equation}
\begin{array}{l}
f(x)=\gamma \exp \left[ - |m^{\prime}| (x- \overline{x}) \cdot A (x- \overline{x})  \right]\\
\\
g(x)=\widetilde{\gamma} \exp \left[ - |n^{\prime}| (x- \tilde{x}) \cdot A (x-\tilde{x})  \right]
\end{array}
\label{eqLeindler2}
\end{equation}
where $\gamma, \widetilde{\gamma} \in \mathbb{R}^+$, $ \overline{x},\tilde{x}  \in \mathbb{R}^d$ and $A$ is any real, symmetric, positive-definite $d \times d$ matrix.
\end{Theorem}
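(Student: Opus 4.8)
The plan is to derive \eqref{eqLeindler1} as the sharp \emph{reverse} Young inequality, following the mass-transport method of Barthe \cite{Barthe}, and then to read off both the integrability statement and the equality case from the same argument. I would begin with three reductions. Since $f,g\ge 0$, every quantity in \eqref{eqLeindler1} is well defined in $[0,\infty]$, so the natural first step is to prove the inequality for \emph{all} nonnegative, not identically zero $f,g$, allowing the right-hand side to be $+\infty$; the claim $f\in L^m$, $g\in L^n$ then comes for free, because if $\|f\star g\|_{L^r}<\infty$ while one of $\|f\|_{L^m}$, $\|g\|_{L^n}$ were infinite, the already established inequality would force $\|f\star g\|_{L^r}=\infty$ (here $\|f\|_{L^m},\|g\|_{L^n}>0$ as $f,g$ are nonzero and nonnegative), a contradiction. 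By homogeneity I then normalize $\int f^m=\int g^n=1$, so $F:=f^m$ and $G:=g^n$ are probability densities. Finally, since the constant $(C_mC_n/C_r)^d$ is a $d$-th power, convolution is affine-covariant (a linear change $x\mapsto Mx$ sends $f\star g$ to $|\det M|^{-1}(f\star g)(M\,\cdot)$), and the extremal Gaussians \eqref{eqLeindler2} are affine images of coordinatewise tensor products, I may reduce the inequality to dimension $d=1$ by a tensorization/Knothe-map argument and recover the general positive-definite matrix $A$ afterwards by diagonalization.

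In dimension one, let $F_0,G_0$ be the one-dimensional Gaussian densities expected to be extremal, and let $U,V:\mathbb{R}\to\mathbb{R}$ be the increasing transport maps with $U_\#(F_0\,dx)=F\,dx$ and $V_\#(G_0\,dy)=G\,dy$, so that $F(U(x))\,U'(x)=F_0(x)$ and $G(V(y))\,V'(y)=G_0(y)$ almost everywhere (absolute continuity of $F,G$ is arranged by approximation). After a change of variables adapted to the exponent relation $\tfrac1m+\tfrac1n=1+\tfrac1r$, I would bound $\int(f\star g)^r$ from below by combining the Jacobian identities above with the \emph{concavity} of $t\mapsto t^r$ for $0<r\le 1$ (applied through reverse Jensen/reverse H\"older inequalities, which are valid precisely because $m',n',r'<0$ here). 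The exponent relation is exactly what makes the powers of $U'$ and $V'$ match so the transported integral closes up, and the Babenko–Beckner constants \eqref{eqLieb4.2}--\eqref{eqLieb4.3} enter through the explicit value of $\int(F_0\star G_0)^r$ for the Gaussian reference. Carrying this out yields \eqref{eqLeindler1} in one dimension, and hence in all dimensions.

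For the equality case with $0<m,n<1$, I would track when the auxiliary inequalities are tight: equality in the reverse H\"older/concavity steps forces the transport maps $U,V$ to be affine with a common linear part, which means $f$ and $g$ are themselves Gaussians whose quadratic forms are proportional in the ratio dictated by $m'$ and $n'$. Unwinding the normalization and recentering, and using affine covariance to restore a general matrix $A$, gives the matched-Gaussian form \eqref{eqLeindler2} with shared positive-definite $A$, centers $\overline{x},\tilde{x}$, and coefficients $|m'|,|n'|$; the reality and positivity of $\gamma,\widetilde{\gamma}$ follow at once from $f,g\ge 0$, which is why no phase factors appear here in contrast to the forward case \eqref{eqYoung2} of Theorem~\ref{TheoremYoung}.

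The main obstacle is the systematic \emph{reversal of direction} of every classical tool. In the forward Young inequality (Theorem~\ref{TheoremYoung}) one uses H\"older and convexity with exponents $\ge 1$; here $m,n,r\le 1$ make all conjugate exponents negative and turn convexity into concavity, so each step must be replaced by its reverse form and its range of validity checked, with the signs of $m',n',r'$ kept consistent throughout. Establishing the concave-direction estimate cleanly, and proving the rigidity that propagates equality back to affinity of the transport maps, are the two genuinely technical points; the reductions and the integrability corollary are then routine.
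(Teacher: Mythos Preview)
The paper does not give its own proof of this theorem: it is stated as a classical result, attributed to Leindler \cite{Leindler} for the inequality, to Brascamp--Lieb \cite{Lieb3} for the sharp constant and equality cases, and to Barthe \cite{Barthe} for a ``simple proof''. Your outline is precisely a sketch of Barthe's mass-transport argument, so you are supplying the proof that the paper delegates to the literature; the plan is sound and the key points (reduction to $d=1$ via the tensor structure of the constant, monotone rearrangement/transport onto Gaussian reference densities, the concavity of $t\mapsto t^r$ for $0<r\le 1$ replacing convexity, and rigidity forcing affine transport maps) are the right ones.

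One small caution on the equality case: Barthe's paper \cite{Barthe} establishes the sharp constant but does not fully characterize the extremizers; that part is due to Brascamp--Lieb \cite{Lieb3}. Your rigidity sketch (``equality forces $U,V$ affine with common linear part'') is morally correct but is the step that requires genuine care---you must ensure the transport maps are defined everywhere (not just a.e.) and that equality in each pointwise inequality propagates to global affinity. If you want to claim a complete proof rather than an outline, that is where the work lies.
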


Finally, we shall also use the following extension of Cauchy's functional equation to quadratics:

\begin{Theorem}[Lieb]\label{TheoremCauchy}
Let $\xi, \eta$ be complex-valued, Lebesgue measurable functions on $\mathbb{R}^d$ that satisfy $|\xi (t)|=| \eta(t)|=1$, for a.e. $t \in \mathbb{R}^d$. Suppose there exist functions $\mu: \mathbb{R}^d \to \mathbb{R}^d$ and $\nu: \mathbb{R}^d \to \mathbb{R}$ (which are not a priori measurable) such that for a.e. $\omega \in \mathbb{R}^d$ the following holds for a.e. $t \in \mathbb{R}^d$:
\begin{equation}
\xi \left(t + \frac{\omega}{2}\right) \eta \left(t - \frac{\omega}{2}\right) = \exp \left[ i \mu (\omega) \cdot t + i \nu (\omega)\right].
\label{eqCauchy1}
\end{equation}
Then there exist a real, symmetric $d \times d$ matrix $A$, $\overline{\xi}, \overline{\eta} \in \mathbb{R}^d$ and $\gamma, \delta \in \mathbb{R}$ such that:
\begin{equation}
\begin{array}{l}
\xi (t)= \exp \left( i t \cdot A t + i \overline{\xi} \cdot t + i \gamma \right)\\
\\
\eta (t)= \exp \left( - i t \cdot A t + i \overline{\eta} \cdot t + i \delta \right)
\end{array}
\label{eqCauchy2}
\end{equation}
Conversely, if $\xi$, $\eta$ are of the form (\ref{eqCauchy2}), then (\ref{eqCauchy1}) holds.
\end{Theorem}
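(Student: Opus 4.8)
The plan is to reduce the two–variable equation (\ref{eqCauchy1}) to one–variable Cauchy/Jensen–type equations and then invoke the classical fact that \emph{measurable} solutions of such equations are affine. Throughout I work multiplicatively: since $\xi,\eta$ are unimodular their phases are only defined modulo $2\pi$, and the resulting indeterminacy lives in the discrete group $2\pi\mathbb{Z}$; this discreteness is precisely what will be exploited to remove branch ambiguities. Set $G(t,\omega):=\xi(t+\omega/2)\eta(t-\omega/2)$, a jointly measurable function on $\mathbb{R}^{2d}$, so that by Fubini the hypothesis reads $G(t,\omega)=e^{i\mu(\omega)\cdot t+i\nu(\omega)}$ for a.e.\ $(t,\omega)$. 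For fixed $h$ the quotient $G(t+h,\omega)\overline{G(t,\omega)}=e^{i\mu(\omega)\cdot h}$ is independent of $t$; integrating it against a fixed probability density $\rho(t)$ shows that $\omega\mapsto e^{i\mu(\omega)\cdot h}$ is measurable for every $h$, whence $\mu$ is measurable (for instance $\mu_j(\omega)=\lim_{s\to 0^+}s^{-1}\operatorname{Im}e^{is\mu_j(\omega)}$, a pointwise limit of measurable functions).

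The central step is a substitution that freezes one of the two arguments $t\pm\omega/2$ while translating the other. Replacing $(t,\omega)$ by $(t-\sigma,\omega+2\sigma)$ leaves $t+\omega/2$ unchanged and sends $t-\omega/2$ to $t-\omega/2-2\sigma$, so
\[
\frac{G(t-\sigma,\omega+2\sigma)}{G(t,\omega)}=\frac{\eta(t-\omega/2-2\sigma)}{\eta(t-\omega/2)}.
\]
The left–hand side equals $e^{i[\mu(\omega+2\sigma)-\mu(\omega)]\cdot t}$ times a factor independent of $t$, whereas the right–hand side depends only on $a:=t-\omega/2$. Comparing these for two distinct admissible values of $\omega$ forces the character frequency $\mu(\omega+2\sigma)-\mu(\omega)$ to be independent of $\omega$; this is Cauchy's equation, so with measurability $\mu(\omega)=M\omega+b$. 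At the same time it exhibits $a\mapsto\eta(a-2\sigma)\overline{\eta(a)}$ as (a.e.) a character in $a$ for a.e.\ $\sigma$, and the mirror substitution $(t,\omega)\mapsto(t-\sigma,\omega-2\sigma)$ gives the analogous statement for $\xi$.

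Next I upgrade the ``difference is a character'' property to the quadratic–phase form. If $\eta:\mathbb{R}^d\to\mathbb{T}$ is measurable with $\eta(a-c)\overline{\eta(a)}=e^{i\ell(c)\cdot a}\,w(c)$, $|w(c)|=1$, then composing two shifts and comparing frequencies gives $\ell(c+c')=\ell(c)+\ell(c')$, hence $\ell(c)=Lc$; comparing the two orders of composition yields $c^{\top}(L-L^{\top})c'\in 2\pi\mathbb{Z}$ for a.e.\ $c,c'$, and since this bilinear form is continuous it vanishes identically, so $L=L^{\top}$. Setting $Q:=-\tfrac12 L$ and $\zeta(a):=\eta(a)e^{-i\,a\cdot Qa}$ one checks, using the symmetry of $Q$, that $\zeta(a-c)\overline{\zeta(a)}$ no longer depends on $a$; hence $\zeta$ is itself a character and $\eta(a)=e^{i(a\cdot Qa+p\cdot a+r)}$ with $Q=Q^{\top}$ real. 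The same argument applied to $\xi$ gives $\xi(t)=e^{i(t\cdot Pt+\tilde p\cdot t+\tilde r)}$ with $P=P^{\top}$ real.

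Finally I match. Substituting these forms into (\ref{eqCauchy1}), the quadratic–in–$t$ part of the phase is $t\cdot(P+Q)t$, which must vanish because the right–hand side is linear in $t$; thus $Q=-P=:-A$ with $A$ real symmetric. The surviving linear and constant terms then reproduce exactly (\ref{eqCauchy2}) with $\overline{\xi}=\tilde p$, $\overline{\eta}=p$, $\gamma=\tilde r$, $\delta=r$, and force $\mu(\omega)=2A\omega+\overline{\xi}+\overline{\eta}$ and $\nu(\omega)=\tfrac12(\overline{\xi}-\overline{\eta})\cdot\omega+\gamma+\delta$. The converse is the same computation run backwards: inserting (\ref{eqCauchy2}) and using $(t+\omega/2)\cdot A(t+\omega/2)-(t-\omega/2)\cdot A(t-\omega/2)=2\,t\cdot A\omega$ (valid since $A=A^{\top}$) shows the product equals $e^{i\mu(\omega)\cdot t+i\nu(\omega)}$. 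I expect the main obstacle to be the measure–theoretic bookkeeping in the first three paragraphs: establishing the measurability of $\mu$ and $\ell$, ensuring that the substitutions preserve ``a.e.\ validity'' so that the frequency–comparison arguments are legitimate, and repeatedly using the discreteness of $2\pi\mathbb{Z}$ together with continuity of the relevant bilinear forms to pass from congruences modulo $2\pi$ to genuine identities (in particular to obtain the symmetry $L=L^{\top}$). The algebraic matching in the last paragraph is then routine.
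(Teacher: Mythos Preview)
The paper does not give its own proof of this theorem; it simply states that ``the proof of this theorem is basically the same as that of Lemma~4 in \cite{Lieb} with the obvious adaptations to $d>1$.'' Your argument is a correct reconstruction of precisely that approach---freezing one of the arguments $t\pm\omega/2$ by a shear substitution to obtain Cauchy's functional equation for $\mu$, deducing that the multiplicative increments of $\xi$ and $\eta$ are characters, and then using the resulting additivity (together with the discreteness of $2\pi\mathbb{Z}$) to force quadratic phases with matched symmetric matrices---so it aligns with what the paper invokes.
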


The proof of this theorem is basically the same as that of Lemma 4 in \cite{Lieb} with the obvious adaptations to $d>1$.

\subsection{Proof of Theorem \ref{Bounds}}

\vspace{0.3 cm}
\noindent
We are now in a position to prove Theorem \ref{Bounds}. The proof follows essentially the same steps as in the proof of Theorem 2 of \cite{Lieb}.

Since by assumption, for a.e. fixed $\omega \in \mathbb{R}^d$, the function $t \mapsto \widehat{f} \left(t+ \frac{\omega}{2} \right) \overline{\widehat{g} \left(t- \frac{\omega}{2} \right)}$ is in $L^1 (\mathbb{R}^d)$, then by (\ref{eqAmbiguityFunction1},\ref{eqAmbiguityFunction2}) we conclude that $A(f,g)$ is the Fourier transform of this $L^1 $ function. We can thus use (\ref{eqHausdorffYoung2}) in Theorem \ref{TheoremHausdorffYoung} (with $p=1$) to obtain:
\begin{equation}
\int_{\mathbb{R}^d} |A (f,g) (x, \omega)|^r dx \ge C_{r^{\prime}}^{dr} \left( \int_{\mathbb{R}^d} |\widehat{f} \left(t+ \frac{\omega}{2} \right) \widehat{g} \left(t- \frac{\omega}{2} \right)|^{r^{\prime}} dt \right)^{\frac{r}{r^{\prime}}}
\label{proofConverseLieb1}
\end{equation}
for a.e. $ x \in \mathbb{R}^d$. Notice that the left-hand side of (\ref{proofConverseLieb1}) is finite for a.e. $ x \in \mathbb{R}^d$, since by assumption $A(f,g) \in L_{x, \omega}^{r,s} (\mathbb{R}^{2d})$. The integral on the right-hand side can be written as:
\begin{equation}
J(\omega)= \int_{\mathbb{R}^d} |\widehat{f} \left(t+ \frac{\omega}{2} \right)|^{r^{\prime}} ~| \widehat{g} \left(t- \frac{\omega}{2} \right)|^{r^{\prime}} dt = \left(| (\widehat{ f})^{\vee}|^{r^{\prime}}  \star | \widehat{ g} |^{r^{\prime}}\right) (- \omega),
\label{proofConverseLieb2}
\end{equation}
where $(\widehat{ f})^{\vee} (\xi) = \widehat{ f} (- \xi)$.

It follows from (\ref{proofConverseLieb1},\ref{proofConverseLieb2}):
\begin{equation}
||A(f,g)||_{L_{x, \omega}^{r,s} (\mathbb{R}^{2d})} \ge C_{r^{\prime}}^d \left(\int_{\mathbb{R}^d} |J(\omega)|^{\frac{s}{r^{\prime}}} d \omega \right)^{\frac{1}{s}} = C_{r^{\prime}}^d \| ~ | (\widehat{ f})^{\vee}|^{r^{\prime}}  \star | \widehat{ g} |^{r^{\prime}} \|_{L^{s/r^{\prime}} (\mathbb{R}^d)}^{\frac{1}{r^{\prime}}}
\label{proofConverseLieb3}
\end{equation}
From the Leindler inequality, we thus get:
\begin{equation}
\begin{array}{c}
||A(f,g)||_{L_{x, \omega}^{r,s} (\mathbb{R}^{2d})}  \ge C_{r^{\prime}}^d \left(\frac{C_m C_n}{C_{s/ r^{\prime}}} \right)^{\frac{d}{r^{\prime}}}  \| ~ | \widehat{ f}|^{r^{\prime}} \|_{L^m (\mathbb{R}^d)}^{\frac{1}{r^{\prime}}} ~\| ~ | \widehat{ g}|^{r^{\prime}} \|_{L^n (\mathbb{R}^d)}^{\frac{1}{r^{\prime}}} =\\
\\
=C_{r^{\prime}}^d \left(\frac{C_m C_n}{C_{s/ r^{\prime}}} \right)^{\frac{d}{r^{\prime}}}  \| ~  \widehat{ f} \|_{L^{m r^{\prime}} (\mathbb{R}^d)} ~\| ~  \widehat{ g}  \|_{L^{nr^{\prime}} (\mathbb{R}^d)}
\end{array}
\label{proofConverseLieb3}
\end{equation}
where $0< m,n \le 1$ with
\begin{equation}
\frac{1}{m} + \frac{1}{n}= 1+ \frac{r^{\prime}}{s} .
\label{proofConverseLieb4}
\end{equation}
We thus recover (\ref{eqBounds2}) with $0 < u=m r^{\prime} \le r^{\prime} $, $0 < v=n r^{\prime} \le r^{\prime} $ and $\frac{1}{u} +\frac{1}{v}= \frac{1}{r^{\prime}} + \frac{1}{s}$. Notice that if $r=s$, then $u,v$ are conjugate to each other, as in Lieb's inequality (\ref{eqLieb1}).

\vspace{0.3 cm}
\noindent
From Theorem \ref{TheoremHausdorffYoung}, we have an equality in (\ref{proofConverseLieb1}) for $1<r<2$, if and only if (cf.(\ref{eqHausdorffYoung3})):
\begin{equation}
\widehat{f} \left(t+ \frac{\omega}{2} \right) \overline{\widehat{g} \left(t- \frac{\omega}{2} \right)} = \lambda (\omega) \exp \left[- t\cdot C (\omega) t + c(\omega) \cdot t \right]
\label{proofConverseLieb5}
\end{equation}
for a.e. $\omega \in \mathbb{R}^d$, and where $\lambda (\omega) \in \mathbb{C}$, $ C (\omega)$ is any real, symmetric, positive-definite $d \times d$ matrix and $c(\omega)$ is any vector in $\mathbb{C}^d$.

On the other hand, we have an equality in (\ref{proofConverseLieb3}) for $0<u,v<r^{\prime}$ (or equivalently, for $0<n,m <1$) if and only if
\begin{equation}
\begin{array}{l}
|\widehat{f} (\omega)|= \exp \left[- |m^{\prime}| (\omega-\zeta) \cdot A (\omega-\zeta) + \delta \right] \\
\\
|\widehat{g} (\omega)|=  \exp \left[- |n^{\prime}| (\omega-\chi) \cdot A (\omega-\chi) +\widetilde{\delta}  \right]
\end{array}
\label{proofConverseLieb5}
\end{equation}
where $A$ is a real, symmetric positive-definite matrix, $\zeta,\chi \in \mathbb{R}^d$, $\delta, \widetilde{\delta} \in \mathbb{R}$, and:
\begin{equation}
\begin{array}{l}
m^{\prime}= \frac{m}{m-1}=\frac{u}{u-r^{\prime}}= \frac{u(r-1)}{u(r-1)-r}\\
\\
n^{\prime}= \frac{n}{n-1}=\frac{v}{v-r^{\prime}}= \frac{v(r-1)}{v(r-1)-r}
\end{array}
\label{proofConverseLieb5.1}
\end{equation}
Since these functions never vanish, we may safely define
\begin{equation}
\xi (\omega) \equiv \frac{\widehat{f} (\omega)}{|\widehat{f} (\omega)|}, \hspace{1 cm} \eta (\omega) \equiv \frac{\overline{\widehat{g} (\omega)}}{|\widehat{g} (\omega)|}.
\label{proofConverseLieb6}
\end{equation}
By a simple inspection, we conclude that $\xi$ and $\eta$ satisfy all the conditions stated in Theorem \ref{TheoremCauchy} for $\mu (\omega)= \mbox{Im}  \left(c(\omega)\right)$ and $\nu (\omega)= -i \ln \left( \lambda (\omega) / | \lambda (\omega)|\right)$. We conclude that:
\begin{equation}
\begin{array}{l}
\xi (\omega)= \exp \left(- i \omega \cdot B \omega + i g \cdot \omega + i \rho \right)\\
\\
\eta (\omega)= \exp \left( i \omega \cdot B \omega + i h \cdot \omega + i \widetilde{\rho} \right)
\end{array}
\label{proofConverseLieb7}
\end{equation}
where $B$ is any real, symmetric $d \times d$ matrix, $g,h \in \mathbb{R}^d$ and $\rho, \widetilde{\rho} \in \mathbb{R}$. From $\widehat{f} (\omega)=\xi (\omega)|\widehat{f} (\omega)|$ and $\widehat{g} (\omega)=\overline{\eta (\omega)} |\widehat{g} (\omega)|$, and (\ref{proofConverseLieb5},\ref{proofConverseLieb7}) we recover (\ref{eqBounds4}), which concludes the proof.

\section{Cowling-Price type uncertainty principle}

In the sequel we shall consider functionals of the form:
\begin{equation}
\mathfrak{F}_{a,b}^{p,q} [f]= \| ~|x|^a f \|_{L^p (\mathbb{R}^d)} + \| ~|\omega|^b \widehat{f} \|_{L^q (\mathbb{R}^d)}
\label{eqFunctional1}
\end{equation}

\begin{Proposition}\label{PropositionFunctional1}
Let $1 \le p,q \le \infty$ and $a,b \ge 0$. The functional $\mathfrak{F}_{a,b}^{p,q} $ is continuous and convex in $L_a^p (\mathbb{R}^d) \cap \mathcal{F} L_b^q (\mathbb{R}^d)$.
\end{Proposition}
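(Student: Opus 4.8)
The plan is to split $\mathfrak{F}_{a,b}^{p,q}$ into its two summands and to observe that each is a seminorm dominated by the ambient norm of $L_a^p(\mathbb{R}^d)$, respectively $\mathcal{F}L_b^q(\mathbb{R}^d)$. For \textbf{convexity}, I would note that $f \mapsto |x|^a f$ is linear from $L_a^p(\mathbb{R}^d)$ into $L^p(\mathbb{R}^d)$, and $f \mapsto |\omega|^b \widehat{f}$ is linear from $\mathcal{F}L_b^q(\mathbb{R}^d)$ into $L^q(\mathbb{R}^d)$, being the composition of the Fourier transform with multiplication by $|\omega|^b$. Hence $f \mapsto \||x|^a f\|_{L^p(\mathbb{R}^d)}$ and $f \mapsto \||\omega|^b\widehat{f}\|_{L^q(\mathbb{R}^d)}$ are seminorms, in particular convex functionals, and their sum $\mathfrak{F}_{a,b}^{p,q}$ is convex. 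This requires no computation.

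For \textbf{continuity}, the key elementary point is the pointwise bound $|x|^a \le (1+|x|)^a = \langle x\rangle^a$, valid for all $x \in \mathbb{R}^d$ since $a \ge 0$, which gives
\[
\||x|^a f\|_{L^p(\mathbb{R}^d)} \le \|f\|_{L_a^p(\mathbb{R}^d)}, \qquad \||\omega|^b \widehat{f}\|_{L^q(\mathbb{R}^d)} \le \|\widehat{f}\|_{L_b^q(\mathbb{R}^d)} = \|f\|_{\mathcal{F}L_b^q(\mathbb{R}^d)},
\]
with the obvious reading as essential suprema when $p$ or $q$ equals $\infty$. Then the reverse triangle inequality applied to each of the two seminorms yields, for $f, h \in L_a^p(\mathbb{R}^d)\cap\mathcal{F}L_b^q(\mathbb{R}^d)$,
\[
\bigl|\,\||x|^a f\|_{L^p} - \||x|^a h\|_{L^p}\,\bigr| \le \||x|^a(f-h)\|_{L^p} \le \|f-h\|_{L_a^p(\mathbb{R}^d)},
\]
and the analogous estimate with bound $\|f-h\|_{\mathcal{F}L_b^q(\mathbb{R}^d)}$ for the frequency term; adding the two gives
\[
\bigl|\mathfrak{F}_{a,b}^{p,q}[f] - \mathfrak{F}_{a,b}^{p,q}[h]\bigr| \le \|f-h\|_{L_a^p(\mathbb{R}^d)} + \|f-h\|_{\mathcal{F}L_b^q(\mathbb{R}^d)},
\]
so that $\mathfrak{F}_{a,b}^{p,q}$ is $1$-Lipschitz, hence continuous, for the natural norm on the intersection space.

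There is essentially no obstacle here. The only points worth a line of care are the endpoint cases $p=\infty$ or $q=\infty$, where $|x|^a \le \langle x\rangle^a$ is used verbatim with integrals replaced by essential suprema, and fixing once and for all that the topology on $L_a^p(\mathbb{R}^d)\cap\mathcal{F}L_b^q(\mathbb{R}^d)$ is the one induced by the sum (equivalently, the maximum) of the norms $\|\cdot\|_{L_a^p(\mathbb{R}^d)}$ and $\|\cdot\|_{\mathcal{F}L_b^q(\mathbb{R}^d)}$, with respect to which the displayed computation is literally a Lipschitz estimate. (Alternatively, continuity could be deduced from convexity together with local boundedness from above, but the direct estimate above is cleaner.)
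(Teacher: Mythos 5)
Your proof is correct and is essentially the paper's own argument written out in full: the paper simply observes that convexity is obvious and that $\mathfrak{F}_{a,b}^{p,q}$ is dominated by (indeed, comparable in spirit to) the natural norm of $L_a^p(\mathbb{R}^d)\cap\mathcal{F}L_b^q(\mathbb{R}^d)$, hence continuous. Your explicit Lipschitz estimate via $|x|^a\le\langle x\rangle^a$ and the reverse triangle inequality is a careful unpacking of that one-line remark, and if anything is slightly more precise than the paper's phrasing.
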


\begin{proof}
Convexity is obvious. The rest is an immediate consequence of the fact that the functional is just the natural norm (in the spririt of H. Triebel \cite{Triebel}) of the function space $L_a^p (\mathbb{R}^d) \cap \mathcal{F} L_b^q (\mathbb{R}^d)$, and is therefore continuous.
\end{proof}

Before we continue let us make the following observation:
\begin{Lemma}\label{LemmaReflexivity}
Let $1 < p,q < \infty$ and $a,b \ge 0$. The space $L_a^p (\mathbb{R}^d) \cap \mathcal{F} L_b^q (\mathbb{R}^d)$ is reflexive.
\end{Lemma}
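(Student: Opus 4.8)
The plan is to realize $L_a^p(\mathbb{R}^d) \cap \mathcal{F}L_b^q(\mathbb{R}^d)$ as a closed subspace of a product of reflexive spaces, and then invoke the standard facts that reflexivity passes to closed subspaces and to finite products. The natural candidate is the map
\begin{equation}
\Phi: L_a^p(\mathbb{R}^d) \cap \mathcal{F}L_b^q(\mathbb{R}^d) \longrightarrow L_a^p(\mathbb{R}^d) \times L_b^q(\mathbb{R}^d), \qquad \Phi(f) = (f, \widehat{f}),
\label{eqReflexivityMap}
\end{equation}
which is by construction an isometry onto its image when the domain is given the natural norm $\|f\|_{L_a^p} + \|\widehat{f}\|_{L_b^q}$ (as in Proposition \ref{PropositionFunctional1}). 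Each factor $L_a^p(\mathbb{R}^d)$ is, after the isometric multiplier $f \mapsto \langle x \rangle^a f$, isomorphic to $L^p$ of a Radon measure, and the weighted Lebesgue space $L^p_w(\mathbb{R}^d)$ with $1<p<\infty$ is reflexive (it is uniformly convex by Clarkson, or one appeals directly to $(L^p_w)^* = L^{p'}_{w'}$ with the dual weight); likewise $L_b^q(\mathbb{R}^d)$ is reflexive for $1<q<\infty$, and hence so is its Fourier image $\mathcal{F}L_b^q$, the Fourier transform being a linear isomorphism $L^2 \to L^2$ that restricts to an isometry of the norm. Therefore the product on the right of \eqref{eqReflexivityMap} is reflexive.

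The key remaining step is to show that the image $\Phi(L_a^p \cap \mathcal{F}L_b^q)$ is closed in the product. Suppose $(f_n, \widehat{f_n}) \to (F, G)$ in $L_a^p \times L_b^q$. Then $f_n \to F$ in $L_a^p$, hence $f_n \to F$ in $L_a^p$-norm, which in particular forces $f_n \to F$ in $\mathcal{S}'(\mathbb{R}^d)$ (weighted $L^p$ convergence implies local $L^1$ convergence on compacta, hence distributional convergence, using $a \ge 0$ so the weight is bounded below by $1$). Similarly $\widehat{f_n} \to G$ in $L_b^q$ and hence in $\mathcal{S}'$. Since the Fourier transform is continuous on $\mathcal{S}'$, $\widehat{f_n} \to \widehat{F}$ in $\mathcal{S}'$, and by uniqueness of distributional limits $G = \widehat{F}$. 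Thus $(F, G) = \Phi(F)$ with $F \in L_a^p$ and $\widehat{F} = G \in L_b^q$, i.e. $F \in L_a^p \cap \mathcal{F}L_b^q$, so the image is closed.

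A closed subspace of a reflexive Banach space is reflexive, and the domain is isometrically isomorphic to this closed subspace via $\Phi$; this completes the argument. I expect the only genuinely delicate point to be the closedness of the image, i.e. the passage from norm convergence in the two weighted Lebesgue spaces to distributional convergence so that the Fourier relation survives in the limit — once that is in hand everything else is a routine invocation of the permanence properties of reflexivity. One should take a moment to record explicitly why $L^p_w$ is reflexive for $1<p<\infty$ (uniform convexity, or the identification of the dual), since this is the structural input that makes the whole scheme work; the hypotheses $1<p,q<\infty$ are used precisely here, and $a,b \ge 0$ is used to guarantee the weights are bounded below so that convergence in the weighted norm controls convergence in $\mathcal{S}'$.
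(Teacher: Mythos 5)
Your proof is correct and follows essentially the same route as the paper's: both realize the intersection as a closed subspace of a product of two reflexive weighted spaces (the paper uses the diagonal in $L^p_a(\mathbb{R}^d) \times \mathcal{F}L^q_b(\mathbb{R}^d)$, you use the graph of the Fourier transform in $L^p_a(\mathbb{R}^d) \times L^q_b(\mathbb{R}^d)$, which is the same construction up to the defining isometry $\mathcal{F}L^q_b \cong L^q_b$). The only substantive difference is that you spell out why this subspace is closed, via passage to convergence in $\mathcal{S}'(\mathbb{R}^d)$ and continuity of the Fourier transform there, a point the paper asserts without detail.
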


\begin{proof}
Since $L_a^p (\mathbb{R}^d)$ and $ \mathcal{F} L_b^q (\mathbb{R}^d)$ are reflexive, so is $\mathcal{G} \equiv L_a^p (\mathbb{R}^d) \times \mathcal{F} L_b^q (\mathbb{R}^d)$, when we use the usual product norm $\|(x,y)\|_{X \times Y}= \|x\|_X+ \|y\|_Y$ for the product of normed vector spaces $X\times Y$ and dual bracket $\langle (x^{\ast},y^{\ast}), (x,y) \rangle_{X \times Y}= \langle x^{\ast},x\rangle_X+ \langle y^{\ast},y \rangle_Y$, for $(x^{\ast} ,y^{\ast} )\in X^{\prime} \times Y^{\prime}, (x,y) \in X \times Y$.  Then the set $\mathcal{K} \equiv \left\{ (f,g) \in \mathcal{G}: ~ f=g \right\}$ is a closed subspace of $\mathcal{G}$ with respect to the previous product topology. Therefore, $\mathcal{K}$ is reflexive. Since $L_a^p (\mathbb{R}^d) \cap \mathcal{F} L_b^q (\mathbb{R}^d)$ is isomorphic to $\mathcal{K}$, it is also reflexive.
\end{proof}

Notice that this also follows from an old result concerning dual spaces of sums and intersections of Banach spaces (when both are embedded continuously into $\mathcal{S}^{\prime}(\mathbb{R}^d)$) \cite{Liu}.

To avoid a proliferation of indices, we shall use in the sequel the notation
\begin{equation}
\mathcal{B}_1 (\mathbb{R}^d)= L_a^p (\mathbb{R}^d) \cap \mathcal{F} L_b^q (\mathbb{R}^d), \hspace{1 cm}
\mathcal{B}_2 (\mathbb{R}^d)= M_{\alpha, \beta}^{r,s} (\mathbb{R}^d)
\label{eqWeakCompactness1}
\end{equation}
with indices $a,b,p,q,r,s,\alpha, \beta$ to be specified. Moreover, we shall assume a fixed window $g \in \mathcal{S} (\mathbb{R}^d) \backslash \left\{ 0 \right\}$ for the norm of the modulation space $M_{\alpha, \beta}^{r,s} (\mathbb{R}^d)$.

We denote by $\overline{B_1} (R)$ the closed ball of radius $R>0$ in $\mathcal{B}_1 (\mathbb{R}^d)$:
\begin{equation}
\overline{B_1} (R)= \left\{f \in \mathcal{B}_1 (\mathbb{R}^d): ~ ||f||_{\mathcal{B}_1 (\mathbb{R}^d)}  \le R \right\}.
\label{eqWeakCompactness2}
\end{equation}

\begin{Proposition}\label{PropositionWeakCompactness}
Let $0 <r,s \le 2$, $\alpha, \beta, a, b \ge 0$ and $1 < p,q < \infty$. Suppose that $ r \le p$ and $s \le q$, and that inequality (\ref{eqGalperinGrochenig1}) holds with all factors non-negative. If, for $R>0$, the set
\begin{equation}
\mathcal{U}(R) = \left\{f \in \overline{B_1} (R): || f||_{\mathcal{B}_2 (\mathbb{R}^d)}=1 \right\}
\label{eqWeakCompactness4}
\end{equation}
is nonempty, then it is a weakly sequentially compact subset of $\mathcal{B}_1 (\mathbb{R}^d)$.
\end{Proposition}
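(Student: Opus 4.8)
The plan is to derive the conclusion from two facts already at our disposal: the reflexivity of $\mathcal{B}_1(\mathbb{R}^d)$ (Lemma \ref{LemmaReflexivity}) and the compactness of the embedding $\mathcal{B}_1(\mathbb{R}^d)\subset\subset\mathcal{B}_2(\mathbb{R}^d)$ coming from the Galperin--Gr\"ochenig theorem. First I would check that Theorem \ref{TheoremGalperinGrocheing} genuinely applies under the present hypotheses. Since $0<r,s\le 2$ and $\alpha,\beta\ge 0$, each of the two maxima on the right-hand side of (\ref{eqGalperinGrochenig1}) is non-negative, so that right-hand side is $\ge 0$; as (\ref{eqGalperinGrochenig1}) is a \emph{strict} inequality whose left-hand side is the product of two non-negative factors, both of those factors must in fact be strictly positive. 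Combined with $r\le p$, $s\le q$ and $1<p,q<\infty$ (so $p,q\in[1,\infty]$), these are exactly the hypotheses of Theorem \ref{TheoremGalperinGrocheing}, and we conclude that the inclusion $i:\mathcal{B}_1(\mathbb{R}^d)\to\mathcal{B}_2(\mathbb{R}^d)$ is a compact linear operator.

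Next, assume $\mathcal{U}(R)\neq\emptyset$ and let $(f_n)_n$ be a sequence in $\mathcal{U}(R)$; then $\|f_n\|_{\mathcal{B}_1(\mathbb{R}^d)}\le R$ for every $n$, so $(f_n)_n$ is bounded in the reflexive Banach space $\mathcal{B}_1(\mathbb{R}^d)$. By the Eberlein--\v{S}mulian theorem there is a subsequence $(f_{n_k})_k$ and an element $f\in\mathcal{B}_1(\mathbb{R}^d)$ with $f_{n_k}\rightharpoonup f$ in $\mathcal{B}_1(\mathbb{R}^d)$. Because the norm of a Banach space is convex and strongly continuous, hence weakly lower semicontinuous, $\|f\|_{\mathcal{B}_1(\mathbb{R}^d)}\le\liminf_k\|f_{n_k}\|_{\mathcal{B}_1(\mathbb{R}^d)}\le R$, so $f\in\overline{B_1}(R)$. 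What remains is to show $\|f\|_{\mathcal{B}_2(\mathbb{R}^d)}=1$.

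This last step is where compactness is essential. Since $i$ is compact and $(f_{n_k})_k$ is bounded, the sequence $(if_{n_k})_k$ is relatively compact in $\mathcal{B}_2(\mathbb{R}^d)$; on the other hand $if_{n_k}\rightharpoonup if$ weakly in $\mathcal{B}_2(\mathbb{R}^d)$ by continuity of $i$. Hence every norm-convergent subsequence of $(if_{n_k})_k$ has limit $if$, which forces $if_{n_k}\to if$ in the norm of $\mathcal{B}_2(\mathbb{R}^d)$ (this is the standard statement that a compact operator is completely continuous, valid in any Banach space, not only in Hilbert space). Identifying $if$ with $f$ — legitimate because $\mathcal{B}_1(\mathbb{R}^d)$ and $\mathcal{B}_2(\mathbb{R}^d)$ both embed continuously in $\mathcal{S}^{\prime}(\mathbb{R}^d)$, so the weak $\mathcal{B}_1$-limit and the strong $\mathcal{B}_2$-limit of $(f_{n_k})_k$ agree as tempered distributions — we get $\|f\|_{\mathcal{B}_2(\mathbb{R}^d)}=\lim_k\|f_{n_k}\|_{\mathcal{B}_2(\mathbb{R}^d)}=1$. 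Thus $f\in\mathcal{U}(R)$ while $f_{n_k}\rightharpoonup f$ in $\mathcal{B}_1(\mathbb{R}^d)$, which is precisely what weak sequential compactness of $\mathcal{U}(R)$ in $\mathcal{B}_1(\mathbb{R}^d)$ means.

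The only real point requiring care is the first one: upgrading the ``all factors non-negative'' hypothesis of the Proposition to the ``all factors positive'' hypothesis of Theorem \ref{TheoremGalperinGrocheing}, which is what makes the embedding compact rather than merely continuous; the rest is soft functional analysis. A secondary subtlety worth flagging explicitly is that the passage from weak convergence in $\mathcal{B}_1$ to strong convergence in $\mathcal{B}_2$ uses completeness continuity of $i$ in the genuinely Banach (non-Hilbert) setting, together with the $\mathcal{S}^{\prime}$-compatibility of the two spaces so that the two limits may be identified.
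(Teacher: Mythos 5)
Your proof is correct and follows essentially the same route as the paper: reflexivity of $\mathcal{B}_1(\mathbb{R}^d)$ yields a weak subsequential limit lying in the closed ball (you use weak lower semicontinuity of the norm where the paper invokes Mazur's theorem), and the compact Galperin--Gr\"ochenig embedding upgrades this to strong convergence in $\mathcal{B}_2(\mathbb{R}^d)$, forcing the limit to have unit $\mathcal{B}_2$-norm. The only differences are cosmetic --- you invoke complete continuity of the compact inclusion rather than the paper's sub-subsequence-plus-duality-bracket identification of the two limits --- and your explicit upgrade of the ``all factors non-negative'' hypothesis to the ``all factors positive'' hypothesis of Theorem~\ref{TheoremGalperinGrocheing} is a point of care the paper leaves implicit.
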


\begin{proof}
Suppose that $\mathcal{U}(R)$ is nonempty. Let $(f_n)_n$ be an arbitrary sequence in $\mathcal{U}(R)$. Since $ \mathcal{B}_1 (\mathbb{R}^d)$ is reflexive (Lemma \ref{LemmaReflexivity}), we conclude that $(f_n)_n$ has a weakly convergent subsequence, say $(g_k)_k$ with $g_k \rightharpoonup g$ for some $g \in \mathcal{B}_1 (\mathbb{R}^d)$. Since $ \overline{B_1} (R)$ is convex and closed, we have by Mazur's Theorem that $g \in  \overline{B_1} (R)$. It remains to prove that $||g||_{\mathcal{B}_2 (\mathbb{R}^d)}=1$. As $\mathcal{B}_1 (\mathbb{R}^d)$ is compactly embedded in $\mathcal{B}_2 (\mathbb{R}^d)$ (see Theorem \ref{TheoremGalperinGrocheing}), the sequence $(g_k)_k$ has a subsequence $(h_l)_l$ converging strongly in $\mathcal{B}_2 (\mathbb{R}^d)$ to some $h \in \mathcal{B}_2 (\mathbb{R}^d)$. By the continuity of the norm, we have: $||h||_{\mathcal{B}_2 (\mathbb{R}^d)} = \lim_{l \to \infty} ||h_l||_{\mathcal{B}_2 (\mathbb{R}^d)}=1$. We conclude the proof by showing that $h=g$ a.e.. Since $(h_l)_l$ is a subsequence of $(g_k)_k$, we have for all $u \in \mathcal{B}_2^{\prime} (\mathbb{R}^d)$:
\begin{equation}
\begin{array}{c}
\lim_{l \to \infty} \langle u, g-h_l \rangle_2 = \langle u,g \rangle_2- \lim_{l \to \infty} \langle u, h_l \rangle_2=\\
\\
=\langle u,g \rangle_1- \lim_{l \to \infty} \langle u, h_l \rangle_1 = \langle u,g \rangle_1- \langle u,g \rangle _1=0,
\end{array}
\label{eqWeakCompactness5}
\end{equation}
where $\langle \cdot, \cdot \rangle_1,~ \langle \cdot, \cdot \rangle_2$ denote the duality brackets in $\mathcal{B}_1 (\mathbb{R}^d),~\mathcal{B}_2 (\mathbb{R}^d)$, respectively, and where we used the fact that $u \in \mathcal{B}_2^{\prime} (\mathbb{R}^d) \subset \mathcal{B}_1^{\prime} (\mathbb{R}^d)$ and $g,h_l \in \mathcal{B}_1 (\mathbb{R}^d) \subset \mathcal{B}_2 (\mathbb{R}^d)$.

On the other hand, since $(h_l)_l$ converges to $h$ strongly in $\mathcal{B}_2 (\mathbb{R}^d)$, it also converges weakly, and thus:
\begin{equation}
\lim_{l \to \infty} \langle u,h_l \rangle_2 = \langle u,h \rangle_2.
\label{eqWeakCompactness6}
\end{equation}
From (\ref{eqWeakCompactness5},\ref{eqWeakCompactness6}), we conclude that
\begin{equation}
\langle u,g-h \rangle_2=0,
\label{eqWeakCompactness7}
\end{equation}
for all $u \in \mathcal{B}_2^{\prime} (\mathbb{R}^d)$, and hence $g=h$ a.e..
\end{proof}

\begin{Lemma}\label{LemmaWeakLowerSemicontinuity}
Assume the same conditions as in the previous Proposition. If $\mathcal{U} (R)$ is nonempty, then the functional $\mathfrak{F}_{a,b}^{p,q}$ is weakly lower semicontinuous in $\mathcal{U} (R)$.
\end{Lemma}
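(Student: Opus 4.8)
The plan is to obtain the statement as an immediate consequence of Proposition~\ref{PropositionFunctional1} together with the classical principle that a convex, norm-continuous functional on a Banach space is lower semicontinuous for the weak topology. Since the hypotheses inherited here include $1<p,q<\infty$ and $a,b\ge 0$, Proposition~\ref{PropositionFunctional1} applies and shows that $\mathfrak{F}_{a,b}^{p,q}$ is convex and norm-continuous on the Banach space $\mathcal{B}_1(\mathbb{R}^d)=L_a^p(\mathbb{R}^d)\cap\mathcal{F}L_b^q(\mathbb{R}^d)$; it is in particular finite-valued there, since $|x|^a\le\langle x\rangle^a$ gives $\mathfrak{F}_{a,b}^{p,q}[f]\le\|f\|_{\mathcal{B}_1(\mathbb{R}^d)}$.

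First I would note that, by convexity, every sublevel set $\{f\in\mathcal{B}_1(\mathbb{R}^d):\mathfrak{F}_{a,b}^{p,q}[f]\le c\}$ is convex, and by continuity it is norm-closed. By Mazur's theorem (equivalently, by Hahn--Banach separation) a convex norm-closed subset of a Banach space is weakly closed, so all sublevel sets of $\mathfrak{F}_{a,b}^{p,q}$ are weakly closed; this is exactly the assertion that $\mathfrak{F}_{a,b}^{p,q}$ is lower semicontinuous for the weak topology of $\mathcal{B}_1(\mathbb{R}^d)$, and hence weakly sequentially lower semicontinuous: if $f_n\rightharpoonup f$ in $\mathcal{B}_1(\mathbb{R}^d)$ then $\mathfrak{F}_{a,b}^{p,q}[f]\le\liminf_{n}\mathfrak{F}_{a,b}^{p,q}[f_n]$. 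Restricting this to sequences contained in $\mathcal{U}(R)$ with weak limit in $\mathcal{U}(R)$ --- the type of convergence produced by Proposition~\ref{PropositionWeakCompactness} --- yields the claimed weak lower semicontinuity of $\mathfrak{F}_{a,b}^{p,q}$ on $\mathcal{U}(R)$.

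As a more hands-on alternative I could argue componentwise: the inclusion $f\mapsto f$ of $\mathcal{B}_1(\mathbb{R}^d)$ into $L_a^p(\mathbb{R}^d)$ and the map $f\mapsto\widehat f$ of $\mathcal{B}_1(\mathbb{R}^d)$ into $L_b^q(\mathbb{R}^d)$ are bounded and linear, hence weak-to-weak continuous, so $f_n\rightharpoonup f$ in $\mathcal{B}_1(\mathbb{R}^d)$ forces $|x|^a f_n\rightharpoonup|x|^a f$ in $L^p(\mathbb{R}^d)$ and $|\omega|^b\widehat{f_n}\rightharpoonup|\omega|^b\widehat f$ in $L^q(\mathbb{R}^d)$; then the weak lower semicontinuity of the $L^p$- and $L^q$-norms (valid because $1<p,q<\infty$), added together, gives the inequality. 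The only real work either way is the bookkeeping of which dual pairings carry the weight factors, so I do not expect any genuine obstacle here: conceptually the lemma is just ``convex $+$ continuous $\Rightarrow$ weakly lower semicontinuous'' applied on $\mathcal{B}_1(\mathbb{R}^d)$ and then restricted to $\mathcal{U}(R)$.
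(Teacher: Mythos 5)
Your proposal is correct and follows essentially the same route as the paper: invoke Proposition~\ref{PropositionFunctional1} for convexity and continuity of $\mathfrak{F}_{a,b}^{p,q}$ on $\mathcal{B}_1(\mathbb{R}^d)$, deduce weak lower semicontinuity via Mazur's theorem (the paper states this on the closed convex ball $\overline{B_1}(R)$, you state it on the whole space, which is equivalent here), and then restrict to $\mathcal{U}(R)$. The componentwise alternative you sketch is also valid but not needed.
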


\begin{proof}
The closed ball $\overline{B_1} (R)$ is a convex and closed subset of $\mathcal{B}_1 (\mathbb{R}^d)$. Since $\mathfrak{F}_{a,b}^{p,q}$ is convex and continuous, it is weakly lower semicontinuous on $\overline{B_1} (R)$. On the other hand, $\mathcal{U} (R)$ is a weakly sequentially compact subset of $\mathcal{B}_1 (\mathbb{R}^d)$. Then the restriction of $\mathfrak{F}_{a,b}^{p,q}$ to $\mathcal{U} (R)$ is also weakly lower semicontinuous.
\end{proof}

\begin{Theorem}\label{TheoremMinimum}
Let $1<r,s \le 2$, $\alpha, \beta \ge 0$, $0< a,b \le r^{\prime}$ and $r\leq p \leq r^{\prime}$, $s\leq q \leq r^{\prime}$. If, in addition, inequality (\ref{eqGalperinGrochenig1}) holds, with all factors non-negative, then the functional
\begin{equation}
\mathfrak{F}_{a,b}^{p,q} \left[f \right] = \|~ |x|^a f \|_{L^p (\mathbb{R}^d)} + \|~ |\omega|^b \widehat{f} \|_{L^q (\mathbb{R}^d)}
\label{eqMinimum2}
\end{equation}
attains a minimum in
\begin{equation}
\Omega= \left\{ f \in L_a^p (\mathbb{R}^d) \cap \mathcal{F} L_b^q (\mathbb{R}^d): ~ ||f||_{M_{\alpha, \beta}^{r,s} (\mathbb{R}^d)}=1 \right\}.
\label{eqMinimum3}
\end{equation}

\end{Theorem}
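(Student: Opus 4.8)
The plan is to run the direct method of the calculus of variations: combine the weak lower semicontinuity of $\mathfrak{F}_{a,b}^{p,q}$ (Lemma \ref{LemmaWeakLowerSemicontinuity}) and the weak sequential compactness of the ball-sections $\mathcal{U}(R)$ (Proposition \ref{PropositionWeakCompactness}) to produce a minimizer on some $\mathcal{U}(R)$, and then use a coercivity bound — furnished by the reverse Lieb inequality of Corollary \ref{CorollaryBounds} — to upgrade it to a minimizer on the whole constraint set $\Omega$. Two preliminary observations are needed. First, $\Omega\neq\varnothing$: rescaling any non-zero Schwartz function gives an element of $\Omega$, because $\mathcal{S}(\mathbb{R}^d)$ embeds continuously into both $L_a^p(\mathbb{R}^d)\cap\mathcal{F}L_b^q(\mathbb{R}^d)$ and $M_{\alpha,\beta}^{r,s}(\mathbb{R}^d)$. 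Second, $\mathfrak{F}_{a,b}^{p,q}$ is finite and continuous on all of $\mathcal{B}_1(\mathbb{R}^d)$ by Proposition \ref{PropositionFunctional1}, since $\||x|^a f\|_{L^p}\le\|f\|_{L_a^p}$ and $\||\omega|^b\widehat f\|_{L^q}\le\|\widehat f\|_{L_b^q}$.

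The key step is the coercivity estimate
\[
\|f\|_{\mathcal{B}_1(\mathbb{R}^d)}\le C_0+C_1\,\mathfrak{F}_{a,b}^{p,q}[f],\qquad f\in\Omega,
\]
with $C_0,C_1>0$ independent of $f$. To prove it, estimate $\|f\|_{L_a^p}^p=\int(1+|x|)^{ap}|f|^p\,dx$ by splitting the domain: on $\{|x|>1\}$ one has $(1+|x|)^{ap}\le 2^{ap}|x|^{ap}$, which contributes $2^{ap}\||x|^a f\|_{L^p}^p$; on $\{|x|\le 1\}$ one has $(1+|x|)^{ap}\le 2^{ap}$ and $\int_{|x|\le 1}|f|^p\lesssim\|f\|_{L^{r'}(\mathbb{R}^d)}^p$ by Hölder's inequality on the unit ball (legitimate since $p\le r'$, with implied constant depending only on $d$ and $p/r'$). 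Now, since $\alpha,\beta\ge 0$ the weight of $M_{\alpha,\beta}^{r,s}$ is everywhere $\ge 1$, so $\|f\|_{g,M^{r,s}}\le\|f\|_{g,M_{\alpha,\beta}^{r,s}}=1$, and $f\in\mathcal{B}_1(\mathbb{R}^d)$ forces $f\in L^p(\mathbb{R}^d)$ and $\widehat f\in L^q(\mathbb{R}^d)$; hence Corollary \ref{CorollaryBounds} applies, and taking $u=r'$ and $v=s$ — an admissible pair because $0<s\le 2\le r'$ and \eqref{eqBounds1} then reduces to $\tfrac1{r'}+\tfrac1s=\tfrac1s+\tfrac1{r'}$, while $B(r,s,r',s)=C_{r'}^d$ by \eqref{eqLieb4.3} — it gives $\|f\|_{L^{r'}}\le (C_{r'}^d\|g\|_{L^s})^{-1}$ and $\|\widehat f\|_{L^{r'}}\le (C_{r'}^d\|\widehat g\|_{L^s})^{-1}$, both independent of $f$. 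Running the same splitting for $\|\widehat f\|_{L_b^q}^q$ and summing yields the displayed estimate, with $C_1=2^{\max(a,b)}$.

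With the estimate in hand the proof closes as expected. Fix $\phi\in\Omega$, put $M:=\mathfrak{F}_{a,b}^{p,q}[\phi]<\infty$ and $R:=C_0+C_1 M$; then $\|\phi\|_{\mathcal{B}_1}\le R$, so $\phi\in\mathcal{U}(R)$, whence $\mathcal{U}(R)$ is nonempty and therefore weakly sequentially compact in $\mathcal{B}_1(\mathbb{R}^d)$ by Proposition \ref{PropositionWeakCompactness}, and $\mathfrak{F}_{a,b}^{p,q}$ is weakly lower semicontinuous on it by Lemma \ref{LemmaWeakLowerSemicontinuity}. A minimizing sequence for $\mathfrak{F}_{a,b}^{p,q}$ over $\mathcal{U}(R)$ then has a subsequence converging weakly to some $f_0\in\mathcal{U}(R)\subset\Omega$, and weak lower semicontinuity gives $\mathfrak{F}_{a,b}^{p,q}[f_0]=\inf_{\mathcal{U}(R)}\mathfrak{F}_{a,b}^{p,q}$. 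Finally, for arbitrary $f\in\Omega$: if $\mathfrak{F}_{a,b}^{p,q}[f]>M$ then $\mathfrak{F}_{a,b}^{p,q}[f]>M=\mathfrak{F}_{a,b}^{p,q}[\phi]\ge\mathfrak{F}_{a,b}^{p,q}[f_0]$; if $\mathfrak{F}_{a,b}^{p,q}[f]\le M$ then the coercivity estimate forces $\|f\|_{\mathcal{B}_1}\le R$, so $f\in\mathcal{U}(R)$ and again $\mathfrak{F}_{a,b}^{p,q}[f]\ge\mathfrak{F}_{a,b}^{p,q}[f_0]$. Hence $f_0$ minimizes $\mathfrak{F}_{a,b}^{p,q}$ over $\Omega$.

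I expect the main obstacle to be the coercivity estimate — in particular, checking that $u=r'$, $v=s$ is an admissible pair in Corollary \ref{CorollaryBounds} under the hypotheses $1<r,s\le 2$, $r\le p\le r'$, $s\le q\le r'$, and tracking that every constant entering the bound (from Hölder on the unit ball, from the Babenko--Beckner factor $B(r,s,u,v)$, and from the fixed window $g$) is genuinely uniform over $f\in\Omega$. The remainder is a routine assembly of the compactness and semicontinuity results already established.
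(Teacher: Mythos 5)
Your proposal is correct and follows essentially the same route as the paper: existence of a minimizer on a nonempty slice $\mathcal{U}(R)$ via Proposition \ref{PropositionWeakCompactness} and Lemma \ref{LemmaWeakLowerSemicontinuity}, followed by a coercivity bound on $\Omega$ obtained from Corollary \ref{CorollaryBounds} (which applies since $\|f\|_{g,M^{r,s}}\le\|f\|_{g,M^{r,s}_{\alpha,\beta}}=1$) to rule out minimizers outside the ball. The only differences are bookkeeping: the paper passes from $\|f\|_{L^p_a}$ to $\|f\|_{L^p}+\|\,|x|^af\|_{L^p}$ via the pointwise inequality $(1+|x|)^{ap}\le 2^{ap-1}(1+|x|^{ap})$ and bounds $\|f\|_{L^p}$, $\|\widehat f\|_{L^q}$ directly with the Lieb exponents $p$ and $q$, whereas you split over $\{|x|\le 1\}$ and use H\"older to reduce to the admissible pair $u=r'$, $v=s$ — both are valid under the stated hypotheses.
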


\begin{proof}
Let $C_G >0$ denote the sharp constant in the Galperin-Gr\"ochenig inequality:
\begin{equation}
C_G= \mbox{inf} \frac{|| f||_{\mathcal{B}_1 (\mathbb{R}^d)}}{|| f||_{\mathcal{B}_2 (\mathbb{R}^d)}},
\label{eqMinimum4}
\end{equation}
where the infimum is taken over all $f \in \mathcal{B}_1 (\mathbb{R}^d) \backslash \left\{0 \right\} $. Consequently, there exists $f_1 \in \Omega$ such that
\begin{equation}
C_G < ||f_1||_{\mathcal{B}_1 (\mathbb{R}^d)} < 2 C_G.
\label{eqMinimum5}
\end{equation}
Let
\begin{equation}
M= \mbox{max} \left\{\frac{ap-1}{p} , \frac{bq-1}{q} ,0 \right\} ~,
\label{eqMinimum6}
\end{equation}
and
\begin{equation}
R_G= 2^M \left( \frac{1}{B(r,s,q,u) ||\widehat{g}||_{L^u (\mathbb{R}^d)}} + \frac{1}{B(r,s,p,v) ||g||_{L^v (\mathbb{R}^d)}} + 2 C_G \right) ~,
\label{eqMinimum7}
\end{equation}
where $0<u,v \le r^{\prime}$ are such that:
\begin{equation}
\frac{1}{q}+\frac{1}{u}= \frac{1}{r^{\prime}}+ \frac{1}{s} , \hspace{1 cm} \frac{1}{p}+\frac{1}{v}= \frac{1}{r^{\prime}}+ \frac{1}{s} .
\label{eqMinimum7.1}
\end{equation}

From (\ref{eqMinimum5}), we conclude that $ f_1 \in \overline{B_1} (R_G)$, and hence $\mathcal{U} (R_G)$, as defined in (\ref{eqWeakCompactness4}) is nonempty. By Proposition \ref{PropositionWeakCompactness} it is weakly sequentially compact and $\mathfrak{F}_{a,b}^{p,q}$ is weakly lower semicontinuous therein. Consequently, there exists a minimizer $f_0$ of $\mathfrak{F}_{a,b}^{p,q}$ in $\mathcal{U} (R_G)$. It remains to prove that $f_0$ is a minimizer everywhere in $\Omega$.

Suppose that $ap \ge 1$. We then have from the inequality $|x+y|^t \leq 2^{t-1} (|x|^t +|y|^t)$ for $t \ge 1$:
\begin{equation}
\begin{array}{c}
||f||_{L_a^p (\mathbb{R}^d)} = \left( \int_{\mathbb{R}^d} (1+ |x|)^{ap} |f(x)|^p dx \right)^{1/p} \\
\\
\le 2^{(ap-1)/p} \left( ||f||_{L^p (\mathbb{R}^d)}^p + \| ~|x|^a f \|_{L^p (\mathbb{R}^d)}^p \right)^{1/p} \le 2^M \left( ||f||_{L^p (\mathbb{R}^d)} + \| ~ |x|^a f \|_{L^p (\mathbb{R}^d)} \right)
\end{array}
\label{eqMinimum8}
\end{equation}
Alternatively, if $ap <1$, then we obtain again:
\begin{equation}
\begin{array}{c}
||f||_{L_a^p (\mathbb{R}^d)} = \left( \int_{\mathbb{R}^d} (1+ |x|)^{ap} |f(x)|^p dx \right)^{1/p}  \\
\\
 \le \left( ||f||_{L^p (\mathbb{R}^d)}^p + \| ~|x|^a f \|_{L^p (\mathbb{R}^d)}^p \right)^{1/p} \le 2^M \left( ||f||_{L^p (\mathbb{R}^d)} + \| ~ |x|^a f \|_{L^p (\mathbb{R}^d)} \right)
\end{array}
\label{eqMinimum9}
\end{equation}
In the same fashion, we have:
\begin{equation}
||\widehat{f}||_{L_b^q (\mathbb{R}^d)} \le 2^M \left( ||\widehat{f}||_{L^q (\mathbb{R}^d)} + \| ~ |\omega|^b \widehat{f} \|_{L^q (\mathbb{R}^d)} \right)
\label{eqMinimum10}
\end{equation}
Altogether, we conclude that:
\begin{equation}
\begin{array}{c}
||f||_{\mathcal{B}_1 (\mathbb{R}^d)}= ||f||_{L_a^p (\mathbb{R}^d)} + ||\widehat{f}||_{L_b^q (\mathbb{R}^d)} \\
\\
\le 2^M \left(||f||_{L^p (\mathbb{R}^d)}+ ||\widehat{f}||_{L^q (\mathbb{R}^d)} + \mathfrak{F}_{a,b}^{p,q} \left[f \right] \right)
\end{array}
\label{eqMinimum11}
\end{equation}
If $f$ is such that $||f||_{\mathcal{B}_1 (\mathbb{R}^d)} >  R_G$, then:
\begin{equation}
\begin{array}{c}
||f||_{L^p (\mathbb{R}^d)}+ ||\widehat{f}||_{L^q (\mathbb{R}^d)} + \mathfrak{F}_{a,b}^{p,q} \left[f \right]  \geq 2^{-M} \|f\|_{\mathcal{B}_1 (\mathbb{R}^d)}\\
\\
> \frac{1}{B(r,s,q,u)  ||\widehat{g}||_{L^u (\mathbb{R}^d)}} + \frac{1}{B(r,s,p,v)  ||g||_{L^v (\mathbb{R}^d)}} + 2 C_G
\end{array}
\label{eqMinimum12}
\end{equation}
But, in view of the trivial inequality $\|f\|_{g,M_{\alpha,\beta}^{r,s} (\mathbb{R}^d)} \geq \|f\|_{g,M^{r,s} (\mathbb{R}^d)}$ for all $\alpha, \beta \geq 0$ and of Corollary \ref{CorollaryBounds}, this entails:
\begin{equation}
\mathfrak{F}_{a,b}^{p,q} \left[f \right]  > 2C_G > ||f_1 ||_{\mathcal{B}_1 (\mathbb{R})} \ge \mathfrak{F}_{a,b}^{p,q} \left[f_1 \right] \ge\mathfrak{F}_{a,b}^{p,q} \left[f_0 \right]
\label{eqMinimum13}
\end{equation}
which concludes the proof.
\end{proof}

\section{An uncertainty principle in the Hilbert case}

In this section, we deal with the more specific case where $r=s=p=q=2$. All the functional spaces involved become Hilbert spaces. The advantage is that we can slightly generalize our construction and consider weights other than the powers $|x|^a$ and $|\omega|^b$. Moreover, as we shall see later, the minimizers are given by the eigenvalue equation of the inverse of a certain compact localization operator.

Let us start with the following definition.

\begin{Definition}\label{DefinitionWeight}
Given some weight $m_0 \in \mathcal{P} (\mathbb{R}^{2d})$, we call a pair of continuous functions $\psi, \phi \in L_{loc}^{\infty} (\mathbb{R}^d)$ $m_0$-admissible weights if
$$
m(x, \omega)= \sqrt{|m_0(x,\omega)|^2 + | \psi (x)|^2 + |\phi (\omega)|^2}
$$
satisfies $m \in \mathcal{P} (\mathbb{R}^{2d})$ and $\frac{m_0}{m} \in L_0^{\infty} (\mathbb{R}^{2d})$.
\end{Definition}

Let us remark that the conditions of the previous definition imply that $|\psi(x)| \to \infty$ and $|\phi(x)| \to \infty$ at a polynomial rate as $|x| \to \infty$.

A simple consequence of this definition and of the compactness theorem \ref{TheoremBoggiatto} is the following.

\begin{Corollary}\label{CorollaryEmbedding}
Let $(\psi, \phi)$ be $m_0$-admissible weights. Then we have the compact embedding
\begin{equation}
M_m^2 (\mathbb{R}^d) \subset \subset M_{m_0}^2 (\mathbb{R}^d).
\label{eqEmbedding1}
\end{equation}
\end{Corollary}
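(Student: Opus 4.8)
The plan is to invoke the Boggiatto--Toft compactness theorem (Theorem~\ref{TheoremBoggiatto}) with the pair of weights $m$ and $m_0$. By Definition~\ref{DefinitionWeight}, an $m_0$-admissible pair $(\psi,\phi)$ is precisely one for which $m(x,\omega)=\sqrt{|m_0(x,\omega)|^2+|\psi(x)|^2+|\phi(\omega)|^2}$ satisfies $m\in\mathcal{P}(\mathbb{R}^{2d})$ and $m_0/m\in L_0^\infty(\mathbb{R}^{2d})$. So the two hypotheses of Theorem~\ref{TheoremBoggiatto} --- namely $m_1=m$ and $m_2=m_0$ both lie in $\mathcal{P}(\mathbb{R}^{2d})$, and $m_2/m_1=m_0/m\in L_0^\infty(\mathbb{R}^{2d})$ --- are satisfied by hypothesis. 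Taking $p=q=2$ in the theorem then yields that the embedding $i:M_m^2(\mathbb{R}^d)\to M_{m_0}^2(\mathbb{R}^d)$ is compact, which is exactly \eqref{eqEmbedding1}.

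The only point requiring a word of justification is that $m_0\in\mathcal{P}(\mathbb{R}^{2d})$: this is given outright in the statement of Definition~\ref{DefinitionWeight} (the phrase ``given some weight $m_0\in\mathcal{P}(\mathbb{R}^{2d})$''), and $m\in\mathcal{P}(\mathbb{R}^{2d})$ is part of the admissibility condition. One should also observe that $M_m^2(\mathbb{R}^d)$ and $M_{m_0}^2(\mathbb{R}^d)$ are instances of the spaces $M_{m_1}^{p,q}(\mathbb{R}^d)$, $M_{m_2}^{p,q}(\mathbb{R}^d)$ appearing in Theorem~\ref{TheoremBoggiatto} with $p=q=2$, using the notational convention $M_m^2=M_m^{2,2}$ fixed in Definition~\ref{DefinitionModulationspace}; since the window in the modulation-space norm may be taken to be any fixed nonzero Schwartz function without changing the space, there is no compatibility issue between the two formulations. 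The inclusion $M_m^2(\mathbb{R}^d)\subset M_{m_0}^2(\mathbb{R}^d)$ as sets is immediate from $m_0\lesssim m$ (which follows from $m_0/m\in L^\infty$), so the content is purely the compactness.

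There is essentially no obstacle here: the corollary is a direct specialization of an already-quoted theorem, and the entire work has been pushed into the formulation of Definition~\ref{DefinitionWeight}, which was tailored exactly so that its conclusion matches the hypotheses of Theorem~\ref{TheoremBoggiatto}. If one wanted to be thorough, the single routine check worth spelling out is the elementary estimate $\max(|m_0(z)|,|\psi|,|\phi|)\le m(z)\le |m_0(z)|+|\psi|+|\phi|$ underlying both the moderateness of $m$ (when not assumed) and the set-theoretic inclusion; but since admissibility already posits $m\in\mathcal{P}(\mathbb{R}^{2d})$, even this is unnecessary. Thus the proof is a one-line application of Theorem~\ref{TheoremBoggiatto}.
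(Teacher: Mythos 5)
Your proof is correct and is exactly the argument the paper intends: the corollary is stated there as ``a simple consequence'' of Definition~\ref{DefinitionWeight} and Theorem~\ref{TheoremBoggiatto}, and your application with $m_1=m$, $m_2=m_0$, $p=q=2$ is precisely that. No issues.
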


For future reference, we consider the following alternative norm.

\begin{Lemma}\label{LemmaPhiPsi1}
Let $(\psi,\phi)$ be $m_0$-admissible weights. Then the norm $\| \cdot \|_{M_m^2 (\mathbb{R}^d)}$ is equivalent to
\begin{equation}
||f||_{\psi,\phi,m_0}^2 = \|f\|_{M_{m_0}^2 (\mathbb{R}^d)}^2+ \| \psi f \|_{L^2 (\mathbb{R}^d)}^2 + \| \phi \widehat{f} \|_{L^2 (\mathbb{R}^d)}^2.
\label{eqPhisPsi2}
\end{equation}
\end{Lemma}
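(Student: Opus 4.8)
The plan is to show the two-sided inequality $\|f\|_{\psi,\phi,m_0} \asymp \|f\|_{M_m^2(\mathbb{R}^d)}$ by unwinding the definitions of all three summands appearing in $\|\cdot\|_{\psi,\phi,m_0}^2$ in terms of the short-time Fourier transform, and comparing pointwise on phase space with the square of the weight $m(x,\omega)^2 = |m_0(x,\omega)|^2 + |\psi(x)|^2 + |\phi(\omega)|^2$. The key observation is that, by Proposition \ref{PropositionModulationPartial}, the weight $\psi(x)$ depending only on $x$ gives $M_\psi^2(\mathbb{R}^d) = L_\psi^2(\mathbb{R}^d)$, so $\|\psi f\|_{L^2(\mathbb{R}^d)} = \|f\|_{M_{\psi}^2(\mathbb{R}^d)}$ up to an equivalence constant depending on the window $g$; similarly the weight $\phi(\omega)$ depending only on $\omega$ gives $M_\phi^2(\mathbb{R}^d) = \mathcal{F}L_\phi^2(\mathbb{R}^d)$, so $\|\phi\widehat{f}\|_{L^2(\mathbb{R}^d)} = \|f\|_{M_{\phi}^2(\mathbb{R}^d)}$ up to an equivalence constant. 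Thus each of the three terms in \eqref{eqPhisPsi2} is equivalent to $\|m_i V_g f\|_{L^2(\mathbb{R}^{2d})}^2$ for $m_1 = m_0$, $m_2(x,\omega)=\psi(x)$, $m_3(x,\omega)=\phi(\omega)$ respectively, with the same fixed window $g$.

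First I would make this precise: fix a window $g\in\mathcal{S}(\mathbb{R}^d)\setminus\{0\}$ and, using the independence of the modulation-space norm from the window up to equivalence (and that $m_0, m \in\mathcal{P}(\mathbb{R}^{2d})$, so these are genuine moderate weights), write
\begin{equation}
\|f\|_{M_{m_0}^2(\mathbb{R}^d)}^2 + \|\psi f\|_{L^2(\mathbb{R}^d)}^2 + \|\phi\widehat{f}\|_{L^2(\mathbb{R}^d)}^2 \asymp \int_{\mathbb{R}^{2d}} \big(|m_0(x,\omega)|^2 + |\psi(x)|^2 + |\phi(\omega)|^2\big)\,|V_g f(x,\omega)|^2\,dx\,d\omega,
\end{equation}
where the implied constants depend only on $g$ and on the moderateness constants of the weights. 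The integrand on the right is exactly $m(x,\omega)^2|V_g f(x,\omega)|^2$, so the right-hand side equals $\|f\|_{M_m^2(\mathbb{R}^d)}^2$. That gives the equivalence directly. The only subtlety to check is that $\psi$ and $\phi$, although only assumed continuous and in $L^\infty_{loc}$, do generate weights in the sense needed: since $(\psi,\phi)$ is $m_0$-admissible, $m\in\mathcal{P}(\mathbb{R}^{2d})$, and from $|\psi(x)|, |\phi(\omega)| \le m(x,\omega)$ together with the remark after Definition \ref{DefinitionWeight} that $|\psi|,|\phi|$ grow at a polynomial rate, one gets that the separable weights $x\mapsto\psi(x)$ and $\omega\mapsto\phi(\omega)$ are dominated by polynomially moderate weights; this is what licenses applying Proposition \ref{PropositionModulationPartial} and the window-independence to the terms $\|\psi f\|_{L^2}$ and $\|\phi\widehat{f}\|_{L^2}$.

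The main obstacle, and the place I would be most careful, is the legitimacy of identifying $\|\psi f\|_{L^2(\mathbb{R}^d)}$ with $\|f\|_{M_{\psi}^2(\mathbb{R}^d)}$ when $\psi$ is merely a continuous function of polynomial-type growth rather than a bona fide element of $\mathcal{P}(\mathbb{R}^d)$: Proposition \ref{PropositionModulationPartial} is stated for $m\in\mathcal{P}$. One resolves this either by noting that $\psi^2 \le m^2 \in\mathcal{P}(\mathbb{R}^{2d})$ forces $|\psi|$ to be bounded by a polynomial weight (and $m_0$-admissibility forces it to be bounded below, away from a compact set, by a reciprocal-polynomial weight, hence by continuity and positivity of $m$ on all of $\mathbb{R}^d$), so that $1+|\psi|$ is equivalent to a weight in $\mathcal{P}(\mathbb{R}^d)$ and one applies the Proposition to that; alternatively one argues directly from the definition of $V_g(\psi f)$ via $V_g(\psi f)(x,\omega)$ being a convolution-type average of $\psi(t)$ against $V_g f$ localized near $x$, using the polynomial moderateness of $\psi$ to pull $\psi(t)$ out as $\asymp\psi(x)$. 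Either way the computation is routine once the weight-class bookkeeping is done; I would present the first route as it is shorter. Finally, since all three terms are non-negative, no cancellation issues arise and the equivalence of the sum with $\|f\|_{M_m^2(\mathbb{R}^d)}^2$ follows from equivalence term by term, completing the proof.
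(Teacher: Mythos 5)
Your proposal follows essentially the same route as the paper's proof: identify each of the three terms with a weighted $L^2$-norm of $V_g f$ by the argument of Proposition \ref{PropositionModulationPartial}, sum, and recognize the integrand $m^2\,|V_g f|^2$. The weight-class subtlety you single out is real and is in fact glossed over by the paper's one-line proof --- $|\psi|$ itself need not be a moderate weight (e.g.\ $\psi(x)=x$ vanishes at the origin, so the term-by-term equivalence $\|\psi f\|_{L^2}\asymp\|f\|_{M^2_{|\psi|}}$ cannot hold with uniform constants and one must let the $m_0$-term absorb the discrepancy, as in $\|f\|_{L^2}^2+\|xf\|_{L^2}^2\asymp\|\langle x\rangle f\|_{L^2}^2$) --- so your extra care there is warranted rather than superfluous.
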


\begin{proof}
Following the same steps of the proof which leads to Proposition \ref{PropositionModulationPartial} (see the proof of Proposition 11.3.1 in \cite{Grochenig}), we obtain
\begin{equation}
\begin{array}{c}
\|f\|_{\psi,\phi,m_0}^2 = \|f\|_{M_{m_0}^2 (\mathbb{R}^d)}^2+ \| \psi f \|_{L^2 (\mathbb{R}^d)}^2 + \| \phi \widehat{f} \|_{L^2 (\mathbb{R}^d)}^2 \\
\\
\asymp
\|f\|_{M_{m_0}^2 (\mathbb{R}^d)}^2+  \|  f \|_{M_{|\psi|}^2 (\mathbb{R}^d)}^2 + \| f \|_{M_{|\phi|}^2 (\mathbb{R}^d)}^2=\\
\\
=\int_{\mathbb{R}^{2d}} m^2 (z) |V_g f (z)|^2 dz = \|f\|_{M_{m}^2 (\mathbb{R}^d)}^2.
\end{array}
\label{eqPhisPsi5}
\end{equation}
\end{proof}

Notice that we can associate the norm $\|\cdot \|_{\psi,\phi,m_0} $ to the inner product:
\begin{equation}
\begin{array}{c}
\left(u,v \right)_{\psi,\phi,m_0}= \int_{\mathbb{R}^{2d}} m_0(z)^2 V_gu (z) \overline{V_g v(z)} dz +\\
 \\
 +\int_{\mathbb{R}^d}  | \psi (x)|^2  u(x) \overline{v(x)} dx + \int_{\mathbb{R}^d}  | \phi (\omega)|^2 \widehat{u}(\omega) \overline{\widehat{v}(\omega)} d \omega~,
\end{array}
\label{eqPhisPsi3}
\end{equation}
such that $\left(u,u \right)_{\psi,\phi,m_0}= \|u\|_{\psi,\phi,m_0}^2$. Thus, the space $\mathcal{B}^{\psi,\phi,m_0} (\mathbb{R}^d)$ of measurable functions $f$ with finite $\|f\|_{\psi,\phi,m_0}< \infty$, endowed with the inner product $\left(\cdot,\cdot \right)_{\psi,\phi,m_0}$, is a Hilbert space and:
\[
\mathcal{B}^{\psi,\phi,m_0} (\mathbb{R}^d)= M_m^2 (\mathbb{R}^d)~.
\]

We shall now follow {\it mutatis mutandis} our proof of weak sequential compactness (Proposition \ref{PropositionWeakCompactness}) for this case. In the sequel we shall always assume that $(\psi, \phi)$ are a $m_0$-admissible set of weights with $m_0 \in \mathcal{P} (\mathbb{R}^{2d})$. Let us denote by $\overline{B}_R^{\psi,\phi,m_0}$ the following closed ball of radius $R>0$:
\begin{equation}
\overline{B}_R^{\psi,\phi,m_0}:= \left\{ f \in M_m^2 (\mathbb{R}^d): ~ \|f \|_{\psi, \phi,m_0} \le R \right\}.
\label{eqCompactnessHilbert1}
\end{equation}

\begin{Proposition}\label{PropositionCompactnessHilbert1}
Let $R>0$. If the set
\begin{equation}
U_R^{\psi,\phi,m_0}= \left\{ f \in \overline{B}_R^{\psi,\phi,m_0}: ~ \|f \|_{M_{m_0}^2(\mathbb{R}^d)} =1 \right\}
\label{eqCompactnessHilbert2}
\end{equation}
is nonempty, then it is a weakly sequentially compact subset of $M_m^2 (\mathbb{R}^d)$.
\end{Proposition}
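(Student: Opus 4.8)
The plan is to mimic, essentially verbatim, the argument of Proposition \ref{PropositionWeakCompactness}, replacing the pair $(\mathcal{B}_1, \mathcal{B}_2)$ of Banach spaces by $(M_m^2, M_{m_0}^2)$ and exploiting that these are now Hilbert spaces, so reflexivity is automatic. Fix $R>0$, assume $U_R^{\psi,\phi,m_0}$ is nonempty, and let $(f_n)_n$ be an arbitrary sequence in it. Since $M_m^2 (\mathbb{R}^d) = \mathcal{B}^{\psi,\phi,m_0}(\mathbb{R}^d)$ is a Hilbert space (by Lemma \ref{LemmaPhiPsi1} and the remarks following it), it is reflexive, and the closed ball $\overline{B}_R^{\psi,\phi,m_0}$ is bounded; hence $(f_n)_n$ has a subsequence $(g_k)_k$ with $g_k \rightharpoonup g$ in $M_m^2(\mathbb{R}^d)$ for some $g$. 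Because $\overline{B}_R^{\psi,\phi,m_0}$ is convex and norm-closed, Mazur's theorem gives $g \in \overline{B}_R^{\psi,\phi,m_0}$, i.e. $\|g\|_{\psi,\phi,m_0} \le R$.

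It remains to show $\|g\|_{M_{m_0}^2(\mathbb{R}^d)} = 1$. Here I invoke Corollary \ref{CorollaryEmbedding}: since $(\psi,\phi)$ are $m_0$-admissible, the embedding $M_m^2(\mathbb{R}^d) \subset\subset M_{m_0}^2(\mathbb{R}^d)$ is compact. Thus the bounded sequence $(g_k)_k$ has a further subsequence $(h_l)_l$ converging strongly in $M_{m_0}^2(\mathbb{R}^d)$ to some $h$, and by continuity of the norm $\|h\|_{M_{m_0}^2(\mathbb{R}^d)} = \lim_l \|h_l\|_{M_{m_0}^2(\mathbb{R}^d)} = 1$. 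To finish I must identify $h$ with $g$: for every $u$ in the dual of $M_{m_0}^2(\mathbb{R}^d)$, which embeds into the dual of $M_m^2(\mathbb{R}^d)$ via the compact (hence continuous) embedding, weak convergence $g_k \rightharpoonup g$ in $M_m^2$ gives $\langle u, h_l\rangle \to \langle u, g\rangle$, while strong (hence weak) convergence in $M_{m_0}^2$ gives $\langle u, h_l\rangle \to \langle u, h\rangle$; therefore $\langle u, g-h\rangle = 0$ for all such $u$, whence $g = h$ a.e. Consequently $\|g\|_{M_{m_0}^2(\mathbb{R}^d)} = 1$, so $g \in U_R^{\psi,\phi,m_0}$, proving weak sequential compactness.

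The only point requiring a little care — and the analogue of the subtle step in Proposition \ref{PropositionWeakCompactness} — is the duality bookkeeping: one needs that a functional $u \in (M_{m_0}^2)'$ restricts to a bounded functional on $M_m^2$ so that the two limits of $\langle u, h_l\rangle$ can legitimately be compared, and that the elements $g, h_l$ lie in both spaces. Both hold: $M_m^2 \hookrightarrow M_{m_0}^2$ continuously (it is even compact), so $(M_{m_0}^2)' \hookrightarrow (M_m^2)'$ continuously, and every $h_l, g \in M_m^2 \subset M_{m_0}^2$. Since all spaces here are Hilbert spaces this is even more transparent than in the Banach setting of Proposition \ref{PropositionWeakCompactness}, and no new idea is needed beyond what is already in that proof. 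I expect no genuine obstacle; the proof is a routine transcription.
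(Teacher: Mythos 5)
Your proof is correct and follows the paper's argument almost step for step: reflexivity of the Hilbert space $M_m^2(\mathbb{R}^d)$ to extract a weakly convergent subsequence, Mazur's theorem to keep the limit in the closed ball, and the compact embedding of Corollary \ref{CorollaryEmbedding} to get a strongly $M_{m_0}^2$-convergent further subsequence with limit of norm $1$. The only (cosmetic) divergence is in identifying the two limits $g$ and $h$: you restrict functionals $u\in \left(M_{m_0}^2(\mathbb{R}^d)\right)'$ to $M_m^2(\mathbb{R}^d)$ and compare the two limits of $\langle u,h_l\rangle$, exactly as in the Banach-case Proposition \ref{PropositionWeakCompactness}, whereas the paper instead writes $\left(h_l-g,u\right)_{M_{m_0}^2(\mathbb{R}^d)}=\left(A(h_l-g),u\right)_{\psi,\phi,m_0}$ for a bounded operator $A$ obtained from the Riesz representation theorem and tests against $u\in\mathcal{S}(\mathbb{R}^d)$, which is dense in $M_{m_0}^2(\mathbb{R}^d)$. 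Both identifications are valid; the paper's choice has the side benefit of introducing the operator $A$, which is reused in the spectral and Euler--Lagrange analysis of the later sections.
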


\begin{proof}
Suppose that $U_R^{\psi,\phi,m_0}$ is nonempty. Let $(f_n)_n$ be an arbitrary sequence in $U_R^{\psi,\phi,m_0}$. Since $M_m^2 (\mathbb{R}^d)$ is reflexive (it is a Hilbert space), and the norms $\|\cdot \|_{M_m^2 (\mathbb{R}^d)}$ and $\|\cdot \|_{\psi,\phi,m_0}$ are equivalent, we conclude that $(f_n)_n$ has a weakly convergent subsequence $(g_k)_k$, say
\begin{equation}
g_k \rightharpoonup g,
\label{eqCompactnessHilbert3}
\end{equation}
for some $g \in M_m^2 (\mathbb{R}^d)$, and by Mazur's Theorem $g \in  \overline{B}_R^{\psi,\phi,m_0}$. It remains to prove that $\|g \|_{M_{m_0}^2 (\mathbb{R}^d)}=1$. From Corollary \ref{CorollaryEmbedding}, we conclude that the sequence $(g_k)_k$ has a subsequence $(h_l)_l$ converging strongly in $ M_{m_0}^2 (\mathbb{R}^d)$, say $\|h_l -h \|_{M_{m_0}^2(\mathbb{R}^d)} \to 0$, for some $h \in M_{m_0}^2(\mathbb{R}^d)$. By the continuity of the norm, we also have $\|h \|_{M_{m_0}^2(\mathbb{R}^d)} =1$. The proof is complete if we show that $g=h$.

Recall that $ M_{m_0}^2 (\mathbb{R}^d)$ is a Hilbert space with inner product
\begin{equation}
\left( u,v\right)_{M_{m_0}^2 (\mathbb{R}^d)}= \int_{\mathbb{R}^{2d}} m_0^2 (z) V_g u (z) \overline{V_g v(z)} dz.
\label{eqCompactnessHilbert4}
\end{equation}
The mapping $(u,v) \mapsto \left( u,v\right)_{M_{m_0}^2 (\mathbb{R}^d)}$ is a sesquilinear form on $\mathcal{B}^{\psi,\phi,m_0} (\mathbb{R}^d) \times \mathcal{B}^{\psi,\phi,m_0}(\mathbb{R}^d)=M_m^2(\mathbb{R}^d) \times M_m^2(\mathbb{R}^d)$. Moreover, it is bounded as we now prove. Since $m_0(z) \leq  m(z)$ for a.e. $z \in \mathbb{R}^{2d}$, we have from the Cauchy-Schwarz inequality and Lemma \ref{LemmaPhiPsi1}:
\begin{equation}
\begin{array}{c}
\left|\left( u,v\right)_{M_{m_0}^2 (\mathbb{R}^d)} \right| \leq \int_{\mathbb{R}^{2d}} m^2 (z)\left| V_g u (z) \overline{V_g v(z)} \right|dz\\
\\
\leq \|V_g u \|_{L_m^2 (\mathbb{R}^{2d})} ~ \|V_g v \|_{L_m^2 (\mathbb{R}^{2d})} = \|u \|_{M_m^2 (\mathbb{R}^d)} ~ \|v \|_{M_m^2 (\mathbb{R}^d)}\asymp   \|u \|_{\psi, \phi ,m_0} ~ \|v \|_{\psi, \phi ,m_0}~.
\end{array}
\label{eqCompactnessHilbert5}
\end{equation}
By the Riesz representation theorem, there exists a bounded linear operator $A:M_m^2 (\mathbb{R}^d) \to M_m^2 (\mathbb{R}^d)$, such that
\begin{equation}
\left( u,v\right)_{M_{m_0}^2 (\mathbb{R}^d)}=\left(A  u,v\right)_{\psi,\phi,m_0},
\label{eqCompactnessHilbert6}
\end{equation}
for all $u,v \in M_m^2(\mathbb{R}^d)$.

Let $u \in \mathcal{S} (\mathbb{R}^d)$. Since $m_0 \in \mathcal{P} (\mathbb{R}^{2d})$, we have that $\mathcal{S} (\mathbb{R}^d)$ is dense in $M_{m_0}^2 (\mathbb{R}^d)$. We then have:
\begin{equation}
\left( h_l-g,u\right)_{M_{m_0}^2 (\mathbb{R}^d)}=\left(A ( h_l-g), u\right)_{\psi,\phi,m_0}.
\label{eqCompactnessHilbert7}
\end{equation}
If we take the limit $l \to \infty$, the right-hand side of the previous equation vanishes, while the left-hand side becomes $\left( h-g,u\right)_{M_{m_0}^2 (\mathbb{R}^d)}$. Since $\mathcal{S} (\mathbb{R}^d)$ is dense in $M_{m_0}^2 (\mathbb{R}^d)$, we conclude that $h=g$ a.e.
\end{proof}

\begin{Remark}\label{RemarkToftGroch}
Since $\mathcal{B}^{\psi, \phi,m_0} (\mathbb{R}^d)$ may be identified with the modulation space $M^2_m(\mathbb{R}^d)$, we have a concrete realization of the operator $A$ in the preceding proof as a localization operator. Recall that a localization operator with symbol $\sigma$ and window $g\in \mathcal{S}(\mathbb{R}^d)$ is of the form
$$
A^g_{\sigma} f =\int_{\mathbb{R}^{2d}}\sigma(x,\omega)\langle f,\pi(x,\omega)g\rangle\pi(x,\omega)g\,dxd\omega.
$$
If we take for $\sigma$ the weight $m_0/m$, then $A^g_{m_0/m}$ is an isomorphism between $M^2_m(\mathbb{R}^d)$ and $M^2_{m_0}(\mathbb{R}^d)$ by the results in \cite{GrochenigToft1,GrochenigToft2}.
\end{Remark}

Next, we define the functional $\mathfrak{F}^{\psi,\phi,m_0}: M_m^2 (\mathbb{R}^d) \to \left[ \right. 0, \infty \left. \right)$,
\begin{equation}
\mathfrak{F}^{\psi,\phi,m_0} \left[f \right] = \|\psi f \|_{L^2 (\mathbb{R}^d)}^2 + \|\phi \widehat{f} \|_{L^2 (\mathbb{R}^d)}^2 = \|f \|_{\psi,\phi,m_0}^2 - \|f \|_{M_{m_0}^2 (\mathbb{R}^d)}^2.
\label{eqMinimizersHilbert1}
\end{equation}
We then have:
\begin{Lemma}\label{Lemmaweaklowersemicontinuous}
Let $R>0$ be such that $U_R^{\psi,\phi,m_0}$ as defined in Proposition \ref{PropositionCompactnessHilbert1} is nonempty. Then, the functional $\mathfrak{F}^{\psi,\phi,m_0}$ is weakly lower semicontinuous in $U_R^{\psi,\phi,m_0}$.
\end{Lemma}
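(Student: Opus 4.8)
The plan is to mimic the argument for Lemma \ref{LemmaWeakLowerSemicontinuity}: decompose $\mathfrak{F}^{\psi,\phi,m_0}$ into a convex continuous part on the ambient Hilbert space minus a term that is actually \emph{continuous} along weakly convergent sequences in $U_R^{\psi,\phi,m_0}$, and then combine. Concretely, recall from \eqref{eqMinimizersHilbert1} that for $f \in M_m^2(\mathbb{R}^d)$,
\[
\mathfrak{F}^{\psi,\phi,m_0}[f] = \|f\|_{\psi,\phi,m_0}^2 - \|f\|_{M_{m_0}^2(\mathbb{R}^d)}^2 .
\]
The first summand is the square of the Hilbert-space norm on $\mathcal{B}^{\psi,\phi,m_0}(\mathbb{R}^d) = M_m^2(\mathbb{R}^d)$ (Lemma \ref{LemmaPhiPsi1}), hence it is convex and strongly continuous, and therefore weakly lower semicontinuous on all of $M_m^2(\mathbb{R}^d)$, in particular on $\overline{B}_R^{\psi,\phi,m_0} \supset U_R^{\psi,\phi,m_0}$. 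So the issue is entirely the behaviour of the subtracted term $\|f\|_{M_{m_0}^2(\mathbb{R}^d)}^2$ under weak convergence.

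The key point is that on $U_R^{\psi,\phi,m_0}$ the map $f \mapsto \|f\|_{M_{m_0}^2(\mathbb{R}^d)}^2$ is \emph{sequentially weakly continuous}. Indeed, let $(f_n)_n \subset U_R^{\psi,\phi,m_0}$ with $f_n \rightharpoonup f$ in $M_m^2(\mathbb{R}^d)$. By Corollary \ref{CorollaryEmbedding} the embedding $M_m^2(\mathbb{R}^d) \subset\subset M_{m_0}^2(\mathbb{R}^d)$ is compact, so a compact operator maps this weakly convergent sequence to a \emph{strongly} convergent one in $M_{m_0}^2(\mathbb{R}^d)$; moreover the strong limit must coincide with $f$ (the compact embedding is in particular continuous, hence weak-to-weak continuous, so $f_n \rightharpoonup f$ in $M_{m_0}^2(\mathbb{R}^d)$ as well, pinning down the limit, exactly as in the last paragraphs of the proof of Proposition \ref{PropositionCompactnessHilbert1}). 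Passing to a subsequence is harmless since the conclusion we want — lower semicontinuity of $\mathfrak{F}^{\psi,\phi,m_0}$ — is a statement about $\liminf$, and it suffices to prove it along an arbitrary subsequence realizing that $\liminf$. Hence along that subsequence $\|f_n\|_{M_{m_0}^2(\mathbb{R}^d)} \to \|f\|_{M_{m_0}^2(\mathbb{R}^d)}$ by continuity of the norm under strong convergence.

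Putting the pieces together: given $(f_n)_n \subset U_R^{\psi,\phi,m_0}$ with $f_n \rightharpoonup f$, choose a subsequence $(f_{n_k})_k$ with $\mathfrak{F}^{\psi,\phi,m_0}[f_{n_k}] \to \liminf_n \mathfrak{F}^{\psi,\phi,m_0}[f_n]$; along a further subsequence, $\|f_{n_k}\|_{M_{m_0}^2(\mathbb{R}^d)}^2 \to \|f\|_{M_{m_0}^2(\mathbb{R}^d)}^2$ by the compactness argument above, while $\liminf_k \|f_{n_k}\|_{\psi,\phi,m_0}^2 \ge \|f\|_{\psi,\phi,m_0}^2$ by weak lower semicontinuity of the Hilbert norm. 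Therefore
\[
\liminf_n \mathfrak{F}^{\psi,\phi,m_0}[f_n] = \liminf_k \Bigl( \|f_{n_k}\|_{\psi,\phi,m_0}^2 - \|f_{n_k}\|_{M_{m_0}^2(\mathbb{R}^d)}^2 \Bigr) \ge \|f\|_{\psi,\phi,m_0}^2 - \|f\|_{M_{m_0}^2(\mathbb{R}^d)}^2 = \mathfrak{F}^{\psi,\phi,m_0}[f],
\]
which is the desired weak lower semicontinuity on $U_R^{\psi,\phi,m_0}$. I do not expect a genuine obstacle here; the only subtlety to state carefully is that the compact embedding forces the $M_{m_0}^2$-limit of the sequence to be the same $f$, so that the difference of a weakly lsc term and a weakly continuous term is again weakly lsc — all the real work has already been done in Corollary \ref{CorollaryEmbedding} and Proposition \ref{PropositionCompactnessHilbert1}.
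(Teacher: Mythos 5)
Your proof is correct, but it takes a genuinely different route from the paper. The paper never uses the decomposition $\mathfrak{F}^{\psi,\phi,m_0}[f]=\|f\|_{\psi,\phi,m_0}^2-\|f\|_{M_{m_0}^2(\mathbb{R}^d)}^2$ here: instead it treats the two summands $\|\psi f\|_{L^2}^2$ and $\|\phi\widehat{f}\|_{L^2}^2$ directly, observing that $\mathcal{M}_\psi$ and $\mathcal{U}_\phi=\mathcal{M}_\phi\mathcal{F}$ are bounded from $M_m^2(\mathbb{R}^d)$ to $L^2(\mathbb{R}^d)$, so that $f\mapsto\|\mathcal{M}_\psi f\|_{L^2}$ is a continuous convex functional, hence weakly lower semicontinuous on the closed convex ball $\overline{B}_R^{\psi,\phi,m_0}$; squaring (the product of nonnegative w.l.s.c.\ functionals is w.l.s.c.) and summing finishes the argument. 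That route needs neither the compact embedding nor the constraint, and in fact yields lower semicontinuity on the whole ball. Your route instead leans on Corollary \ref{CorollaryEmbedding}: you must do so, because a difference of two w.l.s.c.\ functionals is not w.l.s.c.\ in general, and you correctly identify that the subtracted term $\|\cdot\|_{M_{m_0}^2}^2$ is sequentially weakly \emph{continuous} thanks to the compact embedding (complete continuity of compact operators, plus weak-to-weak continuity to pin down the limit). Your subsequence bookkeeping is valid, though unnecessary: a compact operator sends the entire weakly convergent sequence to a strongly convergent one. What each approach buys: the paper's is more elementary and localizes the use of compactness entirely in Proposition \ref{PropositionCompactnessHilbert1}; yours is shorter given the machinery already in place and makes transparent why the functional $\|f\|_{\psi,\phi,m_0}^2-\|f\|_{M_{m_0}^2}^2$, despite the minus sign, is still w.l.s.c.
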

\begin{proof}
Consider the maps
\begin{equation}
\left(\mathcal{M}_{\psi} f \right) (x):= \psi(x) f(x), \hspace{0.5 cm} \left(\mathcal{U}_{\phi} f\right) (\omega):= \left(\mathcal{M}_{\phi} \mathcal{F} f \right) (\omega)= \phi (\omega) \widehat{f} (\omega),
\label{eqOptimizers1}
\end{equation}
for every $x, \omega \in \mathbb{R}^d$ and $f \in \mathcal{S} (\mathbb{R}^d)$. They extend to bounded linear operators $M_m^2 (\mathbb{R}^{d}) \to L^2 (\mathbb{R}^d)$: $\|\mathcal{M}_{\psi} f\|_{L^2 (\mathbb{R}^d)}=\|\psi f\|_{L^2 (\mathbb{R}^d)} \leq \| f\|_{\psi,\phi,m_0}\asymp \| f\|_{M_m^2 (\mathbb{R}^d)}$. Thus, the map $f \mapsto \|\mathcal{M}_{\psi} f\|_{L^2(\mathbb{R}^d)}$ is a continuous and convex functional on the closed ball $\overline{B}_R^{\psi,\phi,m_0}$, which is a convex and closed subset of $M_m^2 (\mathbb{R}^{2d})$. This means that this map is weakly lower semicontinuous in $\overline{B}_R^{\psi,\phi,m_0}$.

On the other hand, by Proposition \ref{PropositionCompactnessHilbert1}, $U_R^{\psi,\phi,m_0}$ is a weakly sequentially compact subset of $\mathcal{B}^{\psi, \phi,m_0} (\mathbb{R}^d)$. Hence the restriction of $\|\mathcal{M}_{\psi} f\|_{L^2(\mathbb{R}^d)}$ to $U_R^{\psi,\phi,m_0} \subset \overline{B}_R^{\psi,\phi,m_0}$ is  weakly lower semicontinuous. The product of two nonnegative weakly lower semicontinuous functionals is again weakly lower semicontinuous, which entails that $\|\mathcal{M}_{\psi} f\|_{L^2(\mathbb{R}^d)}^2$ is weakly lower semicontinuous. The same can be said about $\|\mathcal{U}_{\phi} f\|_{L^2(\mathbb{R}^d)}^2$. Consequently, $\mathfrak{F}^{\psi,\phi,m_0}$, being the sum of these two functionals, is weakly lower semicontinuous in $U_R^{\psi,\phi,m_0}$.
\end{proof}
We next prove the existence of minimizers.
\begin{Theorem}\label{TheoremMinimizersHilbert}
Let $R>1$ be such that $U_R^{\psi,\phi,m_0}$ as defined in Proposition \ref{PropositionCompactnessHilbert1} is nonempty. Then there exists $f_0 \in U_R^{\psi,\phi,m_0}$ such that
\begin{equation}
\mathfrak{F}^{\psi,\phi,m_0} \left[f_0 \right]  \le \mathfrak{F}^{\psi,\phi,m_0} \left[f \right],
\label{eqMinimizersHilbert2}
\end{equation}
for all
\begin{equation}
f \in \Omega:= \left\{f \in M_m^2 (\mathbb{R}^d): ~ \|f\|_{M_{m_0}^2 (\mathbb{R}^d)} =1 \right\}.
\label{eqMinimizersHilbert3}
\end{equation}
\end{Theorem}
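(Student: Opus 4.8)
The plan is to combine the weak lower semicontinuity of $\mathfrak{F}^{\psi,\phi,m_0}$ on $U_R^{\psi,\phi,m_0}$ (Lemma \ref{Lemmaweaklowersemicontinuous}) with the weak sequential compactness of $U_R^{\psi,\phi,m_0}$ (Proposition \ref{PropositionCompactnessHilbert1}) to produce a minimizer $f_0$ of $\mathfrak{F}^{\psi,\phi,m_0}$ \emph{within the ball}, and then to argue that this $f_0$ is in fact a minimizer over all of $\Omega$. The first half is completely analogous to the proof of Theorem \ref{TheoremMinimum}: since $U_R^{\psi,\phi,m_0}$ is nonempty, weakly sequentially compact, and $\mathfrak{F}^{\psi,\phi,m_0}$ is weakly lower semicontinuous there, a standard direct-method argument (take a minimizing sequence, extract a weakly convergent subsequence with limit still in $U_R^{\psi,\phi,m_0}$, and use weak lower semicontinuity) yields $f_0 \in U_R^{\psi,\phi,m_0}$ with $\mathfrak{F}^{\psi,\phi,m_0}[f_0] \le \mathfrak{F}^{\psi,\phi,m_0}[f]$ for all $f \in U_R^{\psi,\phi,m_0}$.

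The substantive point is the passage from $U_R^{\psi,\phi,m_0}$ to $\Omega$, i.e. showing that no $f \in \Omega$ lying outside the ball $\overline{B}_R^{\psi,\phi,m_0}$ can do better. Note first that $\Omega \cap \overline{B}_R^{\psi,\phi,m_0} = U_R^{\psi,\phi,m_0}$, so we only need to handle $f \in \Omega$ with $\|f\|_{\psi,\phi,m_0} > R$. For such $f$, by the identity $\mathfrak{F}^{\psi,\phi,m_0}[f] = \|f\|_{\psi,\phi,m_0}^2 - \|f\|_{M_{m_0}^2(\mathbb{R}^d)}^2$ from (\ref{eqMinimizersHilbert1}) and the constraint $\|f\|_{M_{m_0}^2(\mathbb{R}^d)} = 1$, we get
\begin{equation}
\mathfrak{F}^{\psi,\phi,m_0}[f] = \|f\|_{\psi,\phi,m_0}^2 - 1 > R^2 - 1.
\label{eqProofLB}
\end{equation}
On the other hand, pick any fixed reference element of $\Omega$, say a nonzero $h_1 \in \mathcal{S}(\mathbb{R}^d)$ normalized so that $\|h_1\|_{M_{m_0}^2(\mathbb{R}^d)} = 1$; then $h_1 \in M_m^2(\mathbb{R}^d)$ (its STFT decays rapidly and $m \in \mathcal{P}(\mathbb{R}^{2d})$), with some finite value $\mathfrak{F}^{\psi,\phi,m_0}[h_1] =: K_1$. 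Provided $R$ is chosen large enough that $R^2 - 1 \ge K_1$ — and here we must be slightly careful, since in the statement $R>1$ is only assumed to make $U_R^{\psi,\phi,m_0}$ nonempty, so strictly one should either enlarge $R$ or observe that $h_1$ itself forces $U_R^{\psi,\phi,m_0}$ to be nonempty once $R \ge \|h_1\|_{\psi,\phi,m_0}$ — inequality (\ref{eqProofLB}) gives $\mathfrak{F}^{\psi,\phi,m_0}[f] > K_1 \ge \mathfrak{F}^{\psi,\phi,m_0}[h_1] \ge \mathfrak{F}^{\psi,\phi,m_0}[f_0]$, the last step because $h_1 \in U_R^{\psi,\phi,m_0}$ and $f_0$ minimizes there. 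Combining the two cases, $\mathfrak{F}^{\psi,\phi,m_0}[f_0] \le \mathfrak{F}^{\psi,\phi,m_0}[f]$ for every $f \in \Omega$, which is (\ref{eqMinimizersHilbert2}).

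The main obstacle is the bookkeeping around the radius $R$: one needs $R$ simultaneously large enough that $U_R^{\psi,\phi,m_0}$ contains a controlled competitor (so that the ``outside is worse'' comparison has something to compare against) and the argument to be robust to the fact that a larger ball only enlarges $U_R^{\psi,\phi,m_0}$ without changing $\Omega$. The clean way to phrase it, mirroring the choice of $R_G$ in Theorem \ref{TheoremMinimum}, is: fix $h_1 \in \mathcal{S}(\mathbb{R}^d) \setminus \{0\}$ with $\|h_1\|_{M_{m_0}^2}=1$, set $R := \max\{\,\|h_1\|_{\psi,\phi,m_0},\ \sqrt{1+\mathfrak{F}^{\psi,\phi,m_0}[h_1]}\,\}$ (so $U_R^{\psi,\phi,m_0} \ne \emptyset$ automatically), run the direct method to get $f_0$, and then close the argument exactly as above. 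Everything else — existence of the weakly convergent subsequence, membership of the limit in the ball via Mazur, preservation of the constraint $\|\cdot\|_{M_{m_0}^2}=1$ under the (compact) embedding $M_m^2 \subset\subset M_{m_0}^2$ — is supplied verbatim by Proposition \ref{PropositionCompactnessHilbert1} and Lemma \ref{Lemmaweaklowersemicontinuous}.
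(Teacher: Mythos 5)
Your first half (direct method on $U_R^{\psi,\phi,m_0}$ via Proposition \ref{PropositionCompactnessHilbert1} and Lemma \ref{Lemmaweaklowersemicontinuous}) matches the paper. The second half contains a genuine, though easily repaired, gap: your comparison step routes through a fixed reference element $h_1\in\mathcal{S}(\mathbb{R}^d)$ and therefore needs the extra condition $R^2-1\ge \mathfrak{F}^{\psi,\phi,m_0}[h_1]$, which is not part of the hypothesis. Your proposed remedy --- enlarging or redefining $R$ --- does not prove the theorem as stated, since the conclusion asserts that the minimizer lies in $U_R^{\psi,\phi,m_0}$ for the \emph{given} $R$; running the direct method on a larger ball only places $f_0$ in that larger ball, and bringing it back into $U_R^{\psi,\phi,m_0}$ would require a further argument you do not supply.

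The point you miss is that no reference element is needed at all: the identity $\mathfrak{F}^{\psi,\phi,m_0}[f]=\|f\|_{\psi,\phi,m_0}^2-1$, which you already use to get the lower bound $\mathfrak{F}^{\psi,\phi,m_0}[f]>R^2-1$ for $f\in\Omega\setminus U_R^{\psi,\phi,m_0}$, holds on all of $\Omega$ and hence also gives the matching upper bound for free: since $f_0\in U_R^{\psi,\phi,m_0}$ one has $\mathfrak{F}^{\psi,\phi,m_0}[f_0]=\|f_0\|_{\psi,\phi,m_0}^2-1\le R^2-1$. Combining the two bounds closes the argument for \emph{any} $R>1$ with $U_R^{\psi,\phi,m_0}$ nonempty, with no condition relating $R$ to a competitor. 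This is exactly the paper's proof; your $h_1$ and the attendant bookkeeping around the radius are superfluous.
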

\begin{proof}
The set $U_R^{\psi,\phi,m_0}$ is weakly sequentially compact (cf. Proposition \ref{PropositionCompactnessHilbert1}). Moreover, the functional $\mathfrak{F}^{\psi,\phi,m_0}$ is weakly lower semicontinuous (Lemma \ref{Lemmaweaklowersemicontinuous}). Consequently, there exists a minimizer $f_0$ of $\mathfrak{F}^{\psi,\phi,m_0}$ in $U_R^{\psi,\phi,m_0}$. It remains to prove that $f_0$ is in fact a minimizer on the whole set $\Omega$.

From (\ref{eqMinimizersHilbert1}) and (\ref{eqMinimizersHilbert3}), we have:
\begin{equation}
\mathfrak{F}^{\psi,\phi,m_0} \left[f \right]= \|f\|_{\psi,\phi,m_0}^2 - 1,
\label{eqMinimizersHilbert4}
\end{equation}
for all $f \in \Omega$.

Since $f_0 \in U_R^{\psi,\phi,m_0}$ , it follows:
\begin{equation}
\mathfrak{F}^{\psi,\phi,m_0} \left[f_0 \right] \leq R^2- 1.
\label{eqMinimizersHilbert5}
\end{equation}
On the other hand, if $f \in \Omega \backslash U_R^{\psi,\phi,m_0}$:
\begin{equation}
\mathfrak{F}^{\psi,\phi,m_0} \left[f \right] > R^2- 1,
\label{eqMinimizersHilbert6}
\end{equation}
and the result follows.
\end{proof}

\section{Euler-Lagrange equations}

\subsection{The Banach case}

In this section, we shall derive an equation satisfied by the minimizer $f_0$ of the functional (\ref{eqMinimum2}) in the constraint set (\ref{eqMinimum3}). We thus consider the functional
\begin{equation}
\begin{array}{c}
\mathfrak{L}_{a,b}^{p,q} \left[f , \lambda \right]:= \mathfrak{F}_{a,b}^{p,q} \left[f \right] + \lambda \left(1- \|f \|_{M_{\alpha,\beta}^{r,s} (\mathbb{R}^d)} \right) =\\
 \\
 =\|~ |x|^a f \|_{L^p(\mathbb{R}^d)} + \| ~ |\omega|^b \widehat{f} \|_{L^q(\mathbb{R}^d)}  + \lambda \left(1- \|f \|_{M_{\alpha,\beta}^{r,s} (\mathbb{R}^d)} \right),
\end{array}
\label{eqBanachCase1}
\end{equation}
where $\lambda$ is a Lagrange multiplier.

\begin{Theorem}\label{TheoremBanachCase}
The minimizers $f_0$ of (\ref{eqMinimum2}) in the constraint set (\ref{eqMinimum3}) are solutions of the equation:
\begin{equation}
\begin{array}{c}
\|~ |x|^a f \|_{L^p (\mathbb{R}^d)}^{1-p} \int_{\mathbb{R}^d} |x|^{ap} |f(x)|^{p-2} f(x) \overline{u(x)} dx + \\
\\
+ \|~ |\omega|^b \widehat{f} \|_{L^q (\mathbb{R}^d)}^{1-q} \int_{\mathbb{R}^d} |\omega|^{bq} |\widehat{f}(\omega)|^{q-2} \widehat{f}( \omega) \overline{\widehat{u}( \omega)} d \omega =\\
\\
= \lambda \int_{\mathbb{R}^d} \|V_g f(\cdot, \omega) \|_{L_{\alpha}^r (\mathbb{R}^d)}^{s-r} \left[\int_{\mathbb{R}^d} |V_g f (x, \omega)|^{r-2} V_g f (x, \omega) \overline{V_g u(x, \omega)} \langle x \rangle^{\alpha r} dx \right] \langle \omega \rangle^{\beta s} d \omega,
\end{array}
\label{eqBanachCase1.1}
\end{equation}
for all $u \in L_a^p  (\mathbb{R}^d) \cap \mathcal{F} L_b^q  (\mathbb{R}^d)$, and where
\begin{equation}
 \|V_g f (\cdot, \omega) \|_{L_{\alpha}^r (\mathbb{R}^d)}= \left(\int_{\mathbb{R}^d} |V_g f(x, \omega)|^r \langle x \rangle^{\alpha r} dx \right)^{1/r}.
\label{eqBanachCase5}
\end{equation}
The minimizers satisfy this equation with the smallest possible value of $\lambda$.
\end{Theorem}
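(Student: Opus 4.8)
The statement is the Euler--Lagrange equation for the constrained minimization problem of Theorem \ref{TheoremMinimum}. The plan is to compute the Gâteaux derivative of the Lagrangian $\mathfrak{L}_{a,b}^{p,q}[f,\lambda]$ in \eqref{eqBanachCase1} at a minimizer $f_0$ in every admissible direction $u \in L_a^p(\mathbb{R}^d) \cap \mathcal{F}L_b^q(\mathbb{R}^d)$, set it to zero, and then read off \eqref{eqBanachCase1.1}. Concretely, for $t \in \mathbb{R}$ small one considers $\phi(t) := \mathfrak{L}_{a,b}^{p,q}[f_0 + tu, \lambda]$ and shows $\phi'(0) = 0$ (taking real parts, since the functionals are real-valued on a complex space one actually tests against $u$ and $iu$ to recover the complex bilinear form displayed). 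The three summands of $\mathfrak{L}$ are handled separately, each being of the form $t \mapsto \|w(t)\|_{X}$ for a Banach-space-valued affine map $t \mapsto w(t)$; the chain rule gives the derivative as the pairing of $w'(0)$ with the norming functional of $w(0)$.

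\textbf{The three terms.} For the first term, write $N_p(f) := \| \,|x|^a f\|_{L^p(\mathbb{R}^d)} = \big(\int |x|^{ap}|f|^p\big)^{1/p}$. Differentiating $F(t) = \int_{\mathbb{R}^d} |x|^{ap}\,|f_0(x)+tu(x)|^p\,dx$ under the integral sign (justified by dominated convergence, using $|f_0|,|u| \in L_a^p$ and the elementary bound on $\partial_t |z+tw|^p$) and then using $N_p(f) = F(t)^{1/p}$, one gets at $t=0$
\[
\frac{d}{dt}\Big|_{t=0} N_p(f_0+tu) = N_p(f_0)^{1-p}\,\mathrm{Re}\!\int_{\mathbb{R}^d} |x|^{ap}\,|f_0(x)|^{p-2} f_0(x)\,\overline{u(x)}\,dx,
\]
which is exactly the first line of \eqref{eqBanachCase1.1}. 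The second term is identical after replacing $f_0 \mapsto \widehat{f_0}$, $u \mapsto \widehat{u}$, $p \mapsto q$, $a \mapsto b$, and using $\widehat{f_0 + tu} = \widehat{f_0} + t\widehat{u}$. For the constraint term, $\|f\|_{M_{\alpha,\beta}^{r,s}}$ is a nested mixed norm $\big(\int \|V_g f(\cdot,\omega)\|_{L_\alpha^r}^s\,\langle\omega\rangle^{\beta s}\,d\omega\big)^{1/s}$; one differentiates from the inside out — first the inner $L_\alpha^r$ norm (same computation as above, giving the $\langle x\rangle^{\alpha r}$-weighted $r$-power expression with the factor $\|V_g f_0(\cdot,\omega)\|_{L_\alpha^r}^{1-r}$), then the outer $L^s$ norm in $\omega$, producing the overall factor $\|f_0\|_{M_{\alpha,\beta}^{r,s}}^{1-s} = 1$ (which disappears since $\|f_0\|_{M_{\alpha,\beta}^{r,s}}=1$ on $\Omega$) together with the remaining factor $\|V_g f_0(\cdot,\omega)\|_{L_\alpha^r}^{s-r}$ exhibited in the last line of \eqref{eqBanachCase1.1}. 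Here $V_g(f_0+tu) = V_g f_0 + t\,V_g u$ by linearity of the STFT in its first argument, and $V_g u$ is well-defined and in the relevant mixed-norm space because $u \in \mathcal{B}_1 \subset\subset \mathcal{B}_2 = M_{\alpha,\beta}^{r,s}$.

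\textbf{Assembling and the multiplier.} Since $f_0$ minimizes $\mathfrak{F}_{a,b}^{p,q}$ over $\Omega$ (Theorem \ref{TheoremMinimum}), and $\Omega$ is a $C^1$ constraint manifold away from $0$ (the constraint functional $f \mapsto \|f\|_{M_{\alpha,\beta}^{r,s}}$ has nonvanishing derivative at $f_0 \neq 0$, which is where $1 < r,s$ and $f_0 \in \Omega$ matter for differentiability of the norm), the Lagrange multiplier rule applies: there is $\lambda \in \mathbb{R}$ with $D\mathfrak{F}_{a,b}^{p,q}(f_0) = \lambda\, D(\|\cdot\|_{M_{\alpha,\beta}^{r,s}})(f_0)$ as functionals on $\mathcal{B}_1$. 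Equating the three derivatives computed above and testing against $u$ and $iu$ to upgrade from real parts to the full complex pairings yields \eqref{eqBanachCase1.1}. Finally, to see $\lambda$ is the smallest admissible value: pairing the identity with $u = f_0$ and using the homogeneity relations $\int |x|^{ap}|f_0|^{p-2}f_0\overline{f_0} = N_p(f_0)^p$ etc., the left side collapses to $N_p(f_0) + N_q(\widehat{f_0}) = \mathfrak{F}_{a,b}^{p,q}[f_0]$ while the right side collapses to $\lambda\,\|f_0\|_{M_{\alpha,\beta}^{r,s}} = \lambda$, so $\lambda = \mathfrak{F}_{a,b}^{p,q}[f_0]$ equals the minimum value; any other solution of \eqref{eqBanachCase1.1} has a larger (or equal) multiplier by the same identity.

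\textbf{Main obstacle.} The routine parts are the dominated-convergence justifications for differentiating under the integral — these need the elementary pointwise estimate $\big|\,|z+tw|^p - |z|^p\,\big| \lesssim |t|\big(|z|^{p-1}+|w|^{p-1}\big)|w|$ together with Hölder, using $1 < p,q < \infty$ so that $p-1, q-1 > 0$ and the conjugate exponents behave. The genuinely delicate point is \textbf{differentiability of the nested mixed-norm $\|\cdot\|_{M_{\alpha,\beta}^{r,s}}$}: one must justify interchanging $\tfrac{d}{dt}$ with the $\omega$-integral, which requires a uniform-in-$t$ integrable majorant for $\partial_t\big(\|V_g f_0(\cdot,\omega) + t V_g u(\cdot,\omega)\|_{L_\alpha^r}^s\big)$ — obtained by combining the inner-norm estimate above with the outer Hölder inequality in $\omega$, again leaning on $1 < r,s \le 2$ and on $V_g f_0, V_g u$ lying in $L_{x,\omega}^{r,s}$ with the weight $\langle x\rangle^\alpha\langle\omega\rangle^\beta$. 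Care is also needed that the integrand $|V_g f_0(x,\omega)|^{r-2}V_g f_0(x,\omega)$ is locally integrable against the test function near the zero set of $V_g f_0$, which is fine since $r - 2 > -1$.
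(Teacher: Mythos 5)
Your proposal is correct and follows essentially the same route as the paper: first-order (G\^ateaux) expansion of each of the three norms at $f_0$, stationarity of the Lagrangian, testing with $u$ and $iu$ (resp.\ $-iu$) to pass from real parts to the full complex pairing, and testing with $u=f_0$ to identify $\lambda=\mathfrak{F}_{a,b}^{p,q}[f_0]$ as the minimal multiplier. The only cosmetic difference is that the paper first takes $u\in\mathcal{S}(\mathbb{R}^d)$ and extends by density, while you work directly on $L_a^p\cap\mathcal{F}L_b^q$ and are somewhat more explicit about the dominated-convergence justifications.
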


\begin{proof}
Let $u \in \mathcal{S} (\mathbb{R}^d)$. Recall that $\mathcal{S} (\mathbb{R}^d)$ is a dense subset of $L_a^p  (\mathbb{R}^d) \cap \mathcal{F} L_b^q  (\mathbb{R}^d)$. For $t \in \mathbb{R}$, we have:
\begin{equation}
\begin{array}{c}
\|~ |x|^a (f+t u) \|_{L^p(\mathbb{R}^d)}= \|~ |x|^a f \|_{L^p(\mathbb{R}^d)} +\\
 \\
 + t \|~ |x|^a f \|_{L^p(\mathbb{R}^d)}^{1-p} \int_{\mathbb{R}^d} |x|^{ap} |f(x)|^{p-2} \mathcal{R} \left(f(x) \overline{u(x)} \right) dx + \mathcal{O} (t^2),
\end{array}
\label{eqBanachCase2}
\end{equation}
where $ \mathcal{R}$ denotes the real part of a complex number.

Similarly:
\begin{equation}
\begin{array}{c}
\|~ |\omega|^b (\widehat{f}+t \widehat{u}) \|_{L^q(\mathbb{R}^d)}= \|~ |\omega|^b \widehat{f} \|_{L^q(\mathbb{R}^d)} +\\
 \\
 + t \|~ |\omega|^b \widehat{f} \|_{L^q(\mathbb{R}^d)}^{1-q} \int_{\mathbb{R}^d} |\omega|^{bq} |\widehat{f}(\omega)|^{q-2} \mathcal{R} \left(\widehat{f}(\omega) \overline{\widehat{u}(\omega)} \right) d\omega + \mathcal{O} (t^2).
\end{array}
\label{eqBanachCase3}
\end{equation}
Finally:
\begin{equation}
\begin{array}{c}
\|f+tu\|_{M_{\alpha, \beta}^{r,s} (\mathbb{R}^d)}= \|V_g(f+tu)\|_{L_{\alpha, \beta}^{r,s} (\mathbb{R}^{2d})}=\\
\\
= \|f\|_{M_{\alpha, \beta}^{r,s} (\mathbb{R}^d)} + t \|f\|_{M_{\alpha, \beta}^{r,s} (\mathbb{R}^d)}^{1-s} \times \\
\\
\times \int_{\mathbb{R}^d} \|V_g f (\cdot, \omega) \|_{L_{\alpha}^r (\mathbb{R}^d)}^{s-r} \left[\int_{\mathbb{R}^d} |V_g f (x, \omega)|^{r-2} \mathcal{R} \left( V_g f (x, \omega) \overline{V_g u (x, \omega)} \right) \langle x \rangle^{\alpha r} dx \right] \langle \omega \rangle^{\beta s} d \omega+ \\
\\
+ \mathcal{O} (t^2)
\end{array}
\label{eqBanachCase4}
\end{equation}
where $ \|V_g f (\cdot, \omega) \|_{L_{\alpha}^r (\mathbb{R}^d)}$ is as in eq.(\ref{eqBanachCase5}).

The stationarity condition for (\ref{eqBanachCase1}) is obtained from the Fr\'echet derivative:
\begin{equation}
\begin{array}{c}
0 = \lim_{t \to 0} \frac{1}{t} \left(\mathfrak{L}_{a,b}^{p,q} \left[f_0 + t u, \lambda \right]-\mathfrak{L}_{a,b}^{p,q} \left[f_0 , \lambda \right] \right)\\
\\
\Leftrightarrow  \|~ |x|^a f_0 \|_{L^p(\mathbb{R}^d)}^{1-p} \int_{\mathbb{R}^d} |x|^{ap} |f_0(x)|^{p-2} \mathcal{R} \left(f_0(x) \overline{u(x)} \right) dx + \\
\\
 + \|~ |\omega|^b \widehat{f}_0 \|_{L^q(\mathbb{R}^d)}^{1-q} \int_{\mathbb{R}^d} |\omega|^{bq} |\widehat{f}_0(\omega)|^{q-2} \mathcal{R} \left(\widehat{f}_0(\omega) \overline{\widehat{u}(\omega)} \right) d\omega = \\
 \\
 =\lambda_0 \int_{\mathbb{R}^d} \|V_g f_0 (\cdot, \omega) \|_{L_{\alpha}^r (\mathbb{R}^d)}^{s-r} \left[\int_{\mathbb{R}^d} |V_g f_0 (x, \omega)|^{r-2} \mathcal{R} \left( V_g f_0 (x, \omega) \overline{V_g u (x, \omega)} \right) \langle x \rangle^{\alpha r} dx \right] \langle \omega \rangle^{\beta s} d \omega,
\end{array}
\label{eqBanachCase6}
\end{equation}
where we used the constraint $ \|f_0\|_{M_{\alpha, \beta}^{r,s} (\mathbb{R}^d)}=1$. The result then extends to $L_a^p  (\mathbb{R}^d) \cap \mathcal{F} L_b^q  (\mathbb{R}^d)$ by density.

Since (\ref{eqBanachCase6}) holds for all $u \in L_a^p  (\mathbb{R}^d) \cap \mathcal{F} L_b^q  (\mathbb{R}^d)$, then we get in particular for $u=f_0$:
\begin{equation}
\lambda_0 = \|~| x|^a f_0 \|_{L^p (\mathbb{R}^d)} + \|~| \omega|^b \widehat{f}_0 \|_{L^q (\mathbb{R}^d)} = \mathfrak{L}_{a,b}^{p,q} \left[f_0 \right].
\label{eqBanachCase7}
\end{equation}

If $f_1$ is some other solution of (\ref{eqBanachCase6}) with $\lambda_0 $ replaced by $\lambda_1$ and $f_0$ is the minimizer, we would get:
\begin{equation}
\lambda_0  = \mathfrak{L}_{a,b}^{p,q} \left[f_0 \right] \leq  \mathfrak{L}_{a,b}^{p,q} \left[f_1 \right] = \lambda_1.
\label{eqBanachCase8}
\end{equation}
Finally, if we multiply (\ref{eqBanachCase6}) by $i$ and add it to the same equation with $u$ replaced by $-iu$, we obtain (\ref{eqBanachCase1.1}).
\end{proof}

\subsection{The Hilbert case}

We want to minimize the functional
\[
\mathfrak{F}^{\psi,\phi,m_0} \left[f \right] = \| \psi f \|_{L^2 (\mathbb{R}^d)}^2 + \| \phi \widehat{f} \|_{L^2 (\mathbb{R}^d)}^2= \|f\|_{\psi,\phi,m_0}^2 - \|f \|_{M_{m_0}^2 (\mathbb{R}^d)}^2
\]
in $M_m^2 (\mathbb{R}^d)$, subject to the constraint:
\begin{equation}
\|f \|_{M_{m_0}^2 (\mathbb{R}^d)} = 1.
\label{eqEulerLagrange1}
\end{equation}
We thus optimize the functional
\begin{equation}
\mathfrak{L}^{\psi,\phi,m_0} \left[f , \lambda \right] = \mathfrak{F}^{\psi,\phi,m_0} \left[f  \right] + \lambda \left( 1- \|f \|_{M_{m_0}^2 (\mathbb{R}^d)}^2  \right),
\label{eqEulerLagrange2}
\end{equation}
where $\lambda$ is a Lagrange multiplier. In this section we shall always think of $M_m^2 (\mathbb{R}^d)$ as $\mathcal{B}^{\psi,\phi,m_0}  (\mathbb{R}^d)$, with the inner product $\left(\cdot,\cdot\right)_{\psi,\phi,m_0}$ and the norm $\|\cdot \|_{\psi,\phi,m_0}$.

Before we proceed, let us recall that the operator $A$ defined in eq. (\ref{eqCompactnessHilbert6}) is such that:
\begin{equation}
\left(u, v \right)_{M_{m_0}^2 (\mathbb{R}^d)}= \left(Au,v \right)_{\psi,\phi,m_0},
\label{eqEulerLagrange2.A}
\end{equation}
for all $u,v \in M_m^2 (\mathbb{R}^d)=\mathcal{B}^{\psi,\phi,m_0} (\mathbb{R}^d)$.

\begin{Theorem}\label{TheoremSpectrum}
The operator $A$ is positive-definite, compact and closed. It has empty residual spectrum, $0$ belongs to the continuous spectrum and it is a point of accumulation. Moreover, all remaining spectral values are eigenvalues.
\end{Theorem}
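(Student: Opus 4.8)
The plan is to recognise $A$ as the bounded, self-adjoint operator on the Hilbert space $\left(M_m^2(\mathbb{R}^d),(\cdot,\cdot)_{\psi,\phi,m_0}\right)$ representing the Hermitian sesquilinear form $(u,v)\mapsto(u,v)_{M_{m_0}^2(\mathbb{R}^d)}$, and then to read off the spectral statements from the Riesz--Schauder theory of compact self-adjoint operators. That $A$ is everywhere defined and bounded is exactly what the Riesz-representation argument inside the proof of Proposition \ref{PropositionCompactnessHilbert1} produces, the continuity of the form being the estimate (\ref{eqCompactnessHilbert5}). Since $(u,v)\mapsto(u,v)_{M_{m_0}^2(\mathbb{R}^d)}$ is an inner product it is Hermitian, so $(Au,v)_{\psi,\phi,m_0}=(u,v)_{M_{m_0}^2(\mathbb{R}^d)}=\overline{(Av,u)_{\psi,\phi,m_0}}=(u,Av)_{\psi,\phi,m_0}$ for all $u,v$; hence $A$ is self-adjoint, and, being bounded and everywhere defined, closed, with empty residual spectrum. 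Setting $v=u$ in (\ref{eqEulerLagrange2.A}) gives $(Au,u)_{\psi,\phi,m_0}=\|u\|_{M_{m_0}^2(\mathbb{R}^d)}^2>0$ for $u\neq0$, so $A$ is positive-definite, in particular injective, and $0$ is not an eigenvalue.

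For compactness I would factor $A$ through the compact inclusion. Let $\iota\colon M_m^2(\mathbb{R}^d)\hookrightarrow M_{m_0}^2(\mathbb{R}^d)$, which is compact by Corollary \ref{CorollaryEmbedding}, and let $\iota^{\ast}$ be its adjoint with respect to $(\cdot,\cdot)_{\psi,\phi,m_0}$ on the source and $(\cdot,\cdot)_{M_{m_0}^2(\mathbb{R}^d)}$ on the target; then (\ref{eqCompactnessHilbert6}) says $(Au,v)_{\psi,\phi,m_0}=(\iota u,\iota v)_{M_{m_0}^2(\mathbb{R}^d)}=(\iota^{\ast}\iota u,v)_{\psi,\phi,m_0}$, i.e. $A=\iota^{\ast}\iota$, which is compact since $\iota$ is. Alternatively one argues directly: a bounded sequence $(u_n)_n$ in $M_m^2(\mathbb{R}^d)$ has, by Corollary \ref{CorollaryEmbedding}, a subsequence converging in $M_{m_0}^2(\mathbb{R}^d)$, and (\ref{eqCompactnessHilbert5})--(\ref{eqCompactnessHilbert6}) give $\|A(u_n-u_k)\|_{\psi,\phi,m_0}^2=(u_n-u_k,A(u_n-u_k))_{M_{m_0}^2(\mathbb{R}^d)}\le C\|u_n-u_k\|_{M_{m_0}^2(\mathbb{R}^d)}\|A(u_n-u_k)\|_{\psi,\phi,m_0}$, so $(Au_n)_n$ is Cauchy in $M_m^2(\mathbb{R}^d)$.

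It then remains to invoke Riesz--Schauder: for the compact self-adjoint operator $A$ one has $\sigma(A)=\{0\}\cup\{\lambda_n\}_n$, the $\lambda_n$ being nonzero eigenvalues of finite multiplicity whose only possible accumulation point is $0$, and every nonzero spectral value is an eigenvalue. Since $M_m^2(\mathbb{R}^d)$ is infinite-dimensional ($\mathcal{S}(\mathbb{R}^d)$ sits densely in it) and $A$ is compact, $0\in\sigma(A)$; and since $A$ is injective, the spectral theorem forces the eigenvectors of $A$ to span a dense subspace of the infinite-dimensional space $M_m^2(\mathbb{R}^d)$, so, the eigenspaces being finite-dimensional, there must be infinitely many distinct $\lambda_n$ and hence $0$ is an accumulation point of $\sigma(A)$. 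As $0$ is neither an eigenvalue nor in the (empty) residual spectrum, $0\in\sigma_c(A)$, which completes the argument.

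The only step I expect to require genuine care is the assertion that $0$ is an accumulation point of the spectrum: this is the one place where injectivity of $A$ (trivial kernel) and infinite-dimensionality of $M_m^2(\mathbb{R}^d)$ must be combined to rule out the possibility of only finitely many eigenvalues; everything else is a direct application of standard facts about compact self-adjoint operators, given the continuity estimate (\ref{eqCompactnessHilbert5}) and the compact embedding of Corollary \ref{CorollaryEmbedding} already in hand.
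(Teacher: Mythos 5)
Your proposal is correct and follows essentially the same route as the paper: the same Riesz-representation/self-adjointness computation, the same positivity and closedness observations, the same Cauchy-sequence compactness argument via the compact embedding of Corollary \ref{CorollaryEmbedding}, and the same use of injectivity plus infinite-dimensionality to force $0$ into the continuous spectrum as an accumulation point. The only (harmless) variation is your alternative factorization $A=\iota^{\ast}\iota$ through the compact inclusion, which gives compactness slightly more cleanly but is not needed since you also reproduce the paper's direct estimate.
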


\begin{proof}
From the definition of $A$, we have for all $u,v \in M_m^2 (\mathbb{R}^d)$:
\begin{equation}
(Au, v)_{\psi,\phi,m_0} = (u, v)_{M_{m_0}^2 (\mathbb{R}^d)} = \overline{(v,u)}_{M_{m_0}^2 (\mathbb{R}^d)} = \overline{(Av,u)}_{\psi,\phi,m_0} = (u,Av)_{\psi,\phi,m_0} .
\label{eqSpectrum1}
\end{equation}
Hence $A=A^{\ast}$.

Similarly
\begin{equation}
(Au,u)_{\psi,\phi,m_0} = \|u \|_{M_{m_0}^2 (\mathbb{R}^d)}^2 >0,
\label{eqSpectrum2}
\end{equation}
for all $u \in M_m^2 (\mathbb{R}^d) \backslash \left\{0 \right\}$.
Consequently, $A$ is positive-definite.

That $A$ is closed is a simple consequence of the fact that it is bounded and defined on the whole of $M_m^2 (\mathbb{R}^d)$.

Next, we prove compactness. Let $\left(u_n \right)_{n \in \mathbb{N}}$ be a
bounded sequence in $M_m^2 (\mathbb{R}^d)$:
\begin{equation}
\|u_n\|_{\psi,\phi,m_0} \le C
\label{eqSpectrum3}
\end{equation}
for some constant $C>0$ and all $n \in \mathbb{N}$. Since $M_m^2 (\mathbb{R}^d) \subset
\subset M_{m_0}^2 (\mathbb{R}^d)$, $\left(u_n \right)_{n \in \mathbb{N}}$ has a subsequence $\left(v_n \right)_{n \in \mathbb{N}}$ which converges in $M_{m_0}^2 (\mathbb{R}^d)$.
It follows that
\begin{equation}
\begin{array}{c}
\|Av_n-Av_m\|_{\psi,\phi,m_0}^2 = \left(v_n-v_m,A(v_n-v_m) \right)_{M_{m_0}^2(\mathbb{R}^d)} \leq\\
\\
\leq \| A (v_n-v_m)\|_{M_{m_0}^2(\mathbb{R}^d)} \| v_n-v_m \|_{M_{m_0}^2(\mathbb{R}^d)}  \leq \\
\\
\leq \| A (v_n-v_m)\|_{M_m^2 (\mathbb{R}^d)}
 \|v_n-v_m \|_{M_{m_0}^2(\mathbb{R}^d)} \asymp\\
 \\
\asymp \| A (v_n-v_m)\|_{\psi,\phi,m_0}
 \|v_n-v_m \|_{M_{m_0}^2(\mathbb{R}^d)}\leq\\
 \\
 \leq 2 C \|A \|_{Op} ~ \|v_n-v_m \|_{M_{m_0}^2(\mathbb{R}^d)}~,
\end{array}
\label{eqSpectrum4}
\end{equation}
where we used (\ref{eqSpectrum3}) in the last inequality. This shows that $\left(Av_n \right)_{n \in \mathbb{N}}$ is a Cauchy sequence.
 Since $M_m^2 (\mathbb{R}^d)$ is complete, we conclude that
 $\left(Av_n \right)_{n \in \mathbb{N}}$ converges. Since the bounded sequence
 $\left(u_n \right)_{n \in \mathbb{N}}$ was chosen arbitrarily, the operator $A$
 is compact.

That all non-zero elements of the spectrum are eigenvalues is an immediate consequence of the fact that $A$ is compact. Eq. (\ref{eqSpectrum2}) shows that $A$ is injective. Since $A$ is compact and injective, we conclude that $0$ is in the continuous spectrum and the residual spectrum is empty.

If the spectrum of $A$ were finite, then $A$ would have to be of finite rank. But since $A: M_m^2 (\mathbb{R}^d) \to Ran(A)$ is bijective, this is impossible. Hence, the spectrum is infinite and $0$ must be an accumulation point.
\end{proof}

We are now in a position to obtain the Euler-Lagrange equation for the minimizer $f_0$.

\begin{Theorem}\label{TheoremEulerLagrangeEquation1}
The minimizer $f_0$ of $\mathfrak{F}^{\psi,\phi,m_0} $ in $M_m^2 (\mathbb{R}^d)$, subject to the constraint (\ref{eqEulerLagrange1}) is a solution of the eigenvalue equation:
\begin{equation}
A f_0 = \frac{1   }{\lambda +1} f_0.
\label{eqEulerLagrange4}
\end{equation}
Moreover, we have that
\begin{equation}
\lambda = \mathfrak{F}^{\psi,\phi,m_0} \left[f_0 \right]
\label{eqEulerLagrange5}
\end{equation}
is such that $\frac{1}{\lambda+1}$ is the largest eigenvalue of the operator $A$.
\end{Theorem}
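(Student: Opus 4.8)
The plan is to carry out the variation entirely inside the Hilbert space $\mathcal{B}^{\psi,\phi,m_0}(\mathbb{R}^d)=M_m^2(\mathbb{R}^d)$ equipped with $(\cdot,\cdot)_{\psi,\phi,m_0}$, using the representation identity (\ref{eqEulerLagrange2.A}) to turn everything into quadratic forms. First I would rewrite the constraint $\|f\|_{M_{m_0}^2(\mathbb{R}^d)}^2=1$ as $(Af,f)_{\psi,\phi,m_0}=1$ and, by (\ref{eqMinimizersHilbert1}) and (\ref{eqEulerLagrange2.A}),
\[
\mathfrak{F}^{\psi,\phi,m_0}[f]=\|f\|_{\psi,\phi,m_0}^2-(Af,f)_{\psi,\phi,m_0},
\qquad
\mathfrak{L}^{\psi,\phi,m_0}[f,\lambda]=\|f\|_{\psi,\phi,m_0}^2-(1+\lambda)(Af,f)_{\psi,\phi,m_0}+\lambda .
\]
Since $A$ is bounded and self-adjoint (Theorem \ref{TheoremSpectrum}), for each $u\in M_m^2(\mathbb{R}^d)$ the map $t\mapsto\mathfrak{L}^{\psi,\phi,m_0}[f_0+tu,\lambda]$ is a quadratic polynomial in $t$, and expanding it (using $(Au,f_0)_{\psi,\phi,m_0}=\overline{(Af_0,u)_{\psi,\phi,m_0}}$) gives
\[
\frac{d}{dt}\Big|_{t=0}\mathfrak{L}^{\psi,\phi,m_0}[f_0+tu,\lambda]
=2\,\mathrm{Re}\big[(f_0,u)_{\psi,\phi,m_0}-(1+\lambda)(Af_0,u)_{\psi,\phi,m_0}\big].
\]

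Next I would impose stationarity via the Lagrange multiplier rule. The constraint functional $f\mapsto\|f\|_{M_{m_0}^2(\mathbb{R}^d)}^2-1$ has differential $u\mapsto 2\,\mathrm{Re}(Af_0,u)_{\psi,\phi,m_0}$, which is not identically zero because $A$ is injective (eq.\ (\ref{eqSpectrum2})) and $\|f_0\|_{M_{m_0}^2(\mathbb{R}^d)}=1$ forces $f_0\neq0$; hence a multiplier $\lambda$ exists for which the displayed derivative vanishes for all $u$. Evaluating the stationarity condition first at $u$ and then at $iu$ — recalling that $(\cdot,\cdot)_{\psi,\phi,m_0}$ is linear in its first slot, so $(f_0,iu)_{\psi,\phi,m_0}=-i(f_0,u)_{\psi,\phi,m_0}$ — removes the real part and yields $(f_0,u)_{\psi,\phi,m_0}=(1+\lambda)(Af_0,u)_{\psi,\phi,m_0}$ for every $u\in M_m^2(\mathbb{R}^d)$, i.e.\ $f_0=(1+\lambda)Af_0$. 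Since $f_0\neq0$ we have $1+\lambda\neq0$, which is exactly (\ref{eqEulerLagrange4}).

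To identify $\lambda$, pair (\ref{eqEulerLagrange4}) with $f_0$ and use (\ref{eqEulerLagrange2.A}) and the constraint: $1=\|f_0\|_{M_{m_0}^2(\mathbb{R}^d)}^2=(Af_0,f_0)_{\psi,\phi,m_0}=\tfrac{1}{1+\lambda}\|f_0\|_{\psi,\phi,m_0}^2$, so $\|f_0\|_{\psi,\phi,m_0}^2=1+\lambda$ and hence $\lambda=\|f_0\|_{\psi,\phi,m_0}^2-\|f_0\|_{M_{m_0}^2(\mathbb{R}^d)}^2=\mathfrak{F}^{\psi,\phi,m_0}[f_0]$, which is (\ref{eqEulerLagrange5}). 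For the maximality claim, let $\mu$ be any eigenvalue of $A$ with eigenvector $g$; by Theorem \ref{TheoremSpectrum}, $\mu>0$, and since $(Ag,g)_{\psi,\phi,m_0}=\|g\|_{M_{m_0}^2(\mathbb{R}^d)}^2=\mu\|g\|_{\psi,\phi,m_0}^2>0$ we may rescale so that $g\in\Omega$. The same pairing computation gives $\mathfrak{F}^{\psi,\phi,m_0}[g]=\|g\|_{\psi,\phi,m_0}^2-1=\tfrac1\mu-1$. As $f_0$ minimizes $\mathfrak{F}^{\psi,\phi,m_0}$ over $\Omega$ (Theorem \ref{TheoremMinimizersHilbert}), $\lambda=\mathfrak{F}^{\psi,\phi,m_0}[f_0]\le\mathfrak{F}^{\psi,\phi,m_0}[g]=\tfrac1\mu-1$, i.e.\ $\mu\le\tfrac1{1+\lambda}$; since $\tfrac1{1+\lambda}$ is itself an eigenvalue (with eigenvector $f_0$), it is the largest one.

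The substantive input — existence of $f_0$, compactness and self-adjointness of $A$, and the embedding behind them — is already in hand, so the only genuinely delicate step is the passage from minimality to the stationarity equation (legitimacy of the multiplier rule, i.e.\ the constraint qualification above) together with careful bookkeeping of the two distinct inner products $(\cdot,\cdot)_{\psi,\phi,m_0}$ and $(\cdot,\cdot)_{M_{m_0}^2(\mathbb{R}^d)}$ and the conjugate-linearity convention in the complex variation.
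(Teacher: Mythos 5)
Your proposal is correct and follows essentially the same route as the paper: rewrite $\mathfrak{L}^{\psi,\phi,m_0}$ as quadratic forms via (\ref{eqEulerLagrange2.A}), derive the stationarity equation $f_0=(1+\lambda)Af_0$, identify $\lambda$ by pairing with $f_0$, and compare with other eigenvalues through $\mathfrak{F}^{\psi,\phi,m_0}[v_\mu]=\tfrac{1}{\mu}-1$ and minimality. You are in fact somewhat more careful than the paper, which simply asserts the stationarity condition, whereas you justify the multiplier rule and handle the real-part/$iu$ step of the complex variation explicitly.
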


\begin{proof}
Notice that we can reexpress the functional (\ref{eqEulerLagrange2}) as:
\begin{equation}
\mathfrak{L}^{\psi,\phi,m_0} \left[f , \lambda \right] = \left(f, f \right)_{\psi,\phi,m_0} -\left(A f, f \right)_{\psi,\phi,m_0} + \lambda \left( 1- \left(A f, f \right)_{\psi,\phi,m_0}\right),~
\label{eqEulerLagrange6}
\end{equation}

The stationarity condition is:
\begin{equation}
f_0 - A f_0 - \lambda A f_0=0,
\label{eqEulerLagrange7}
\end{equation}
which yields (\ref{eqEulerLagrange4}).

If we compute the inner product $\left(\cdot,\cdot \right)_{\psi,\phi,m_0}$ of the previous equation with $f_0$ and use the constraint condition (\ref{eqEulerLagrange1}), we obtain:
\begin{equation}
\begin{array}{c}
\left(f_0,f_0\right)_{\psi,\phi,m_0} - \left(A f_0,f_0\right)_{\psi,\phi,m_0} - \lambda \left(A f_0,f_0\right)_{\psi,\phi,m_0}=0\\
\\
\Leftrightarrow \|f_0\|_{\psi,\phi,m_0}^2 - \|f_0\|_{M_{m_0}^2 (\mathbb{R}^d)}^2 - \lambda \|f_0\|_{M_{m_0}^2 (\mathbb{R}^d)}^2=0\\
\\
\Leftrightarrow \mathfrak{F}^{\psi,\phi,m_0} \left[f_0\right] - \lambda=0,
\end{array}
\label{eqEulerLagrange8}
\end{equation}
which is (\ref{eqEulerLagrange5}).

It remains to prove that $\frac{1}{\lambda+1}$ is the largest eigenvalue of $A$. Let $\mu \in \mathbb{R}$ be some other eigenvalue of $A$ and $v_{\mu} \in  M_m^2(\mathbb{R}^d)$ an eigenvector associated with it, which we assume to be in $\Omega$:
\begin{equation}
A v_{\mu}= \mu v_{\mu}, \hspace{1 cm} \| v_{\mu} \|_{M_{m_0}^2 (\mathbb{R}^d)}=1.
\label{eqEulerLagrange10}
\end{equation}
From the previous equation, we conclude that:
\begin{equation}
(1-\mu)Av_{\mu} = \mu \left(v_{\mu}-A v_{\mu} \right),
\label{eqEulerLagrange11}
\end{equation}
 and if we compute the inner product of this equation with $v_{\mu}$, we obtain:
 \begin{equation}
\begin{array}{c}
(1-\mu) \left(Av_{\mu},v_{\mu} \right)_{\psi,\phi,m_0}= \mu \left(\left(v_{\mu},v_{\mu} \right)_{\psi,\phi,m_0} -\left(Av_{\mu},v_{\mu} \right)_{\psi,\phi,m_0} \right)\\
\\
\Leftrightarrow (1-\mu) \|v_{\mu} \|_{M_{m_0}^2 (\mathbb{R}^d)}^2= \mu \left( \|v_{\mu} \|_{\psi,\phi,m_0}^2 - \|v_{\mu} \|_{M_{m_0}^2 (\mathbb{R}^d)}^2\right) \\
\\
\Leftrightarrow 1-\mu= \mu \mathfrak{F}^{\psi,\phi,m_0} \left[v_{\mu} \right].
\end{array}
\label{eqEulerLagrange12}
\end{equation}
It follows that:
\begin{equation}
\mathfrak{F}^{\psi,\phi,m_0} \left[v_{\mu} \right] \geq \mathfrak{F}^{\psi,\phi,m_0} \left[f_0 \right] \Leftrightarrow \frac{1}{\mu} -1 \geq \lambda \Leftrightarrow \mu \leq \frac{1}{\lambda +1},
\label{eqEulerLagrange13}
\end{equation}
which concludes the proof.
\end{proof}

It may be more convenient to express the Euler-Lagrange equations in terms of the inverse $A^{-1}$ rather than $A$.

\begin{Theorem}\label{TheoremInverse}
$A^{-1}$ is densely defined in $M_m^2 (\mathbb{R}^d)$, closed and positive-definite. Its spectrum consists only of eigenvalues which can be written as a sequence $0 < \nu_1 \leq \nu_2 \leq \cdots$, with $\nu_j \to + \infty$. Moreover, all the eigenspaces are finite dimensional.
\end{Theorem}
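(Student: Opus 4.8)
The plan is to read off every assertion from Theorem~\ref{TheoremSpectrum}, which already establishes that $A$ is bounded and everywhere defined, self-adjoint, compact, positive-definite, injective, and that its spectrum is an infinite set of eigenvalues accumulating only at $0$. Since $A$ is injective, $A^{-1}$ is a well-defined linear operator with $\mathrm{Dom}(A^{-1})=\mathrm{Ran}(A)$. To see that this domain is dense, I would use that $A=A^{\ast}$ together with $\ker A=\{0\}$, which gives $\overline{\mathrm{Ran}(A)}=(\ker A^{\ast})^{\perp}=(\ker A)^{\perp}=M_m^2(\mathbb{R}^d)$. Closedness of $A^{-1}$ is then automatic: $A$ is bounded and everywhere defined, hence closed, and the graph of $A^{-1}$ is the image of the graph of $A$ under the coordinate flip $(u,v)\mapsto(v,u)$, which is a homeomorphism of $M_m^2(\mathbb{R}^d)\times M_m^2(\mathbb{R}^d)$. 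For positive-definiteness I would take any $u\in\mathrm{Ran}(A)\backslash\{0\}$, write $u=Av$ with $v\neq 0$, and compute $(A^{-1}u,u)_{\psi,\phi,m_0}=(v,Av)_{\psi,\phi,m_0}=(Av,v)_{\psi,\phi,m_0}>0$ by (\ref{eqSpectrum2}).

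For the spectral description I would invoke the spectral theorem for compact self-adjoint operators to obtain an orthonormal basis $(e_j)_{j\in\mathbb{N}}$ of $M_m^2(\mathbb{R}^d)$ with $A e_j=\mu_j e_j$, where each $\mu_j>0$ (by injectivity and positive-definiteness), each eigenvalue has finite multiplicity (by compactness), and $\mu_j\to 0$ --- the latter guaranteed by the part of Theorem~\ref{TheoremSpectrum} asserting that $0$ is an accumulation point of the infinite point spectrum. Setting $\nu_j:=\mu_j^{-1}$ gives $A^{-1}e_j=\nu_j e_j$; after relabelling so that $0<\nu_1\le\nu_2\le\cdots$ this is a sequence with $\nu_j\to+\infty$, and the eigenspace of $A^{-1}$ for $\nu_j$ coincides with the eigenspace of $A$ for $\mu_j$, hence is finite-dimensional. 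Positive-definiteness of $A^{-1}$ is already covered above, but it also re-appears here since every $\nu_j>0$.

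The remaining, and only slightly delicate, point is to show that $\sigma(A^{-1})$ contains nothing beyond $\{\nu_j:j\in\mathbb{N}\}$, i.e. that inverting $A$ produces no continuous or residual spectrum for the unbounded operator $A^{-1}$. Here I would argue directly with the diagonalization: for $\lambda\notin\{\nu_j:j\in\mathbb{N}\}$ the operator that acts as multiplication by $(\nu_j-\lambda)^{-1}$ in the basis $(e_j)$ is the candidate resolvent; since $\nu_j\to+\infty$ there is $N$ with $|\nu_j-\lambda|>1$ for all $j>N$, so $\inf_j|\nu_j-\lambda|>0$, the candidate resolvent is bounded, and one checks it inverts $A^{-1}-\lambda I$ on $\mathrm{Dom}(A^{-1})$; in particular $0\in\rho(A^{-1})$ with $(A^{-1})^{-1}=A$. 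Thus $\sigma(A^{-1})=\{\nu_j\}_{j\in\mathbb{N}}$, all eigenvalues, and the proof is complete. The main obstacle is purely bookkeeping with the unbounded operator $A^{-1}$ (domains, closedness of the resolvent identity), all of which is controlled by the single fact --- inherited from Theorem~\ref{TheoremSpectrum} --- that the eigenvalues of $A$ have no accumulation point other than $0$.
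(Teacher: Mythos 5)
Your argument is correct, and its skeleton coincides with the paper's: everything is harvested from Theorem~\ref{TheoremSpectrum}. The two places where you genuinely diverge are worth noting. For density of $\mathrm{Dom}(A^{-1})=\mathrm{Ran}(A)$ you use the abstract identity $\overline{\mathrm{Ran}(A)}=(\ker A^{\ast})^{\perp}=(\ker A)^{\perp}$ together with injectivity from (\ref{eqSpectrum2}); the paper instead tests a putative orthogonal complement element $u$ against $Av$ for $v\in\mathcal{S}(\mathbb{R}^d)$, uses $(Av,u)_{\psi,\phi,m_0}=(v,u)_{M_{m_0}^2(\mathbb{R}^d)}$ and the density of $\mathcal{S}(\mathbb{R}^d)$ in $M_{m_0}^2(\mathbb{R}^d)$ to force $u=0$. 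Both are valid; yours is shorter and purely operator-theoretic, the paper's makes visible the concrete mechanism that produces density. For the remaining claims the paper is terse --- it deduces closedness and self-adjointness from $(A^{-1})^{\ast}=(A^{\ast})^{-1}$ and declares the spectral statements ``an immediate consequence'' of Theorem~\ref{TheoremSpectrum} --- whereas you diagonalize $A$ and build the resolvent of $A^{-1}$ by hand, which is more work but actually justifies the assertions the paper leaves implicit: that $\{\nu_j\}$ is closed in $\mathbb{R}$ because $\nu_j\to+\infty$, that the candidate resolvent is bounded, and that $0\in\rho(A^{-1})$. The only step you wave at is the verification that the candidate resolvent maps into $\mathrm{Ran}(A)$; this is routine since $\nu_j/|\nu_j-\lambda|$ stays bounded, but it deserves the one line. (You also implicitly use that the eigenbasis is countable, i.e.\ separability of $M_m^2(\mathbb{R}^d)$, which holds here and also follows a posteriori from compactness plus finite multiplicities.) Net effect: same strategy, a cleaner density argument, and a more complete spectral bookkeeping than the original.
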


\begin{proof}
We start by proving that $Ran(A)$ is dense in $M_m^2(\mathbb{R}^d)$. Since $m \in \mathcal{P} (\mathbb{R}^{2d})$, then $\mathcal{S}(\mathbb{R}^{d}) \subset M_m^2(\mathbb{R}^d) $. Let $u \in M_m^2 (\mathbb{R}^d)$ be such that:
\begin{equation}
0= \left(Av,u \right)_{\psi,\phi,m_0} = \left(v,u \right)_{M_{m_0}^2 (\mathbb{R}^d)},
\label{eqTheoremInverse1}
\end{equation}
for all $v \in \mathcal{S}(\mathbb{R}^{d})$. Since $\mathcal{S}(\mathbb{R}^{d})$ is dense in $M_{m_0}^2 (\mathbb{R}^d)$, we conclude that $u=0$, and thus $\left\{Av: ~v \in \mathcal{S}(\mathbb{R}^{d}) \right\}$ is dense in $ \mathcal{B}^{\psi,\phi,m_0} (\mathbb{R}^d)=M_m^2(\mathbb{R}^d)$.

Under these circumstances and taking into account Theorem \ref{TheoremSpectrum}, we conclude that $A^{-1}$ is densely defined, closed and that $(A^{-1})^{\ast}=(A^{\ast})^{-1}$. Thus $A^{-1}$ is self-adjoint and positivity follows immediately.

The statements regarding the spectrum are also an immediate consequence of Theorem \ref{TheoremSpectrum}. We just remark that $0$ is a regular value of $A^{-1}$, since its inverse $A$ exists, is bounded and defined on the whole $ M_m^2(\mathbb{R}^d)$.
\end{proof}

\begin{Theorem}\label{TheoremEulerLagrangeInverse}
The minimizer $f_0$ is an eigenvector of $A^{-1}$ associated with the eigenvalue $\lambda +1=\mathfrak{F}^{\psi,\phi,m_0} \left[f_0 \right] +1$, which is the smallest eigenvalue of $A^{-1}$.
\end{Theorem}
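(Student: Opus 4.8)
The plan is to deduce everything from Theorem \ref{TheoremEulerLagrangeEquation1} together with the spectral description of $A^{-1}$ supplied by Theorem \ref{TheoremInverse}, so that no genuinely new analysis is needed. First I would recall from Theorem \ref{TheoremEulerLagrangeEquation1} that the minimizer $f_0$ satisfies $A f_0 = \frac{1}{\lambda+1} f_0$, that $\lambda = \mathfrak{F}^{\psi,\phi,m_0}[f_0]$ by (\ref{eqEulerLagrange5}), and that $\frac{1}{\lambda+1}$ is the largest eigenvalue of $A$. Since $\frac{1}{\lambda+1}\neq 0$, the relation $f_0 = (\lambda+1)\,A f_0$ exhibits $f_0$ as an element of $\mathrm{Ran}(A)=\mathrm{Dom}(A^{-1})$; applying $A^{-1}$ to the eigenvalue equation then yields $A^{-1} f_0 = (\lambda+1) f_0$. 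Hence $f_0$ is an eigenvector of $A^{-1}$ with eigenvalue $\lambda+1 = \mathfrak{F}^{\psi,\phi,m_0}[f_0]+1$.

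It then remains to identify $\lambda+1$ as the smallest eigenvalue of $A^{-1}$. By Theorem \ref{TheoremSpectrum}, $A$ is injective, compact, positive-definite, and all of its nonzero spectral values are eigenvalues; by Theorem \ref{TheoremInverse}, $A^{-1}$ is self-adjoint and positive-definite with spectrum consisting of eigenvalues $0<\nu_1\le\nu_2\le\cdots$, $\nu_j\to+\infty$. I would record the elementary equivalence of the two eigenvalue problems: if $Av=\mu v$ with $\mu>0$, $v\neq 0$, then $v\in\mathrm{Dom}(A^{-1})$ and $A^{-1}v=\mu^{-1}v$; conversely $A^{-1}v=\nu v$ forces $v=\nu Av$, hence $Av=\nu^{-1}v$. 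Thus $\nu$ is an eigenvalue of $A^{-1}$ iff $\nu^{-1}$ is an eigenvalue of $A$, with the same eigenvectors. Consequently $\nu_1$, the least eigenvalue of $A^{-1}$, is the reciprocal of the largest eigenvalue of $A$, i.e. $\nu_1=\left(\tfrac{1}{\lambda+1}\right)^{-1}=\lambda+1$ by Theorem \ref{TheoremEulerLagrangeEquation1}, which completes the argument.

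I do not expect a substantive obstacle: the statement is essentially a repackaging of the content already assembled in Theorems \ref{TheoremEulerLagrangeEquation1}--\ref{TheoremInverse}. The only points demanding a little care are the membership $f_0\in\mathrm{Dom}(A^{-1})$ and the passage from the eigenvalue problem for the bounded operator $A$ to that for its unbounded inverse; both follow at once from the injectivity of $A$ and from the fact that $A^{-1}$ is defined precisely on $\mathrm{Ran}(A)$, which is dense by Theorem \ref{TheoremInverse}.
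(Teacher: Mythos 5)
Your argument is correct. The paper itself dispatches this theorem with a single line --- ``the proof follows \emph{mutatis mutandis} that of Theorem \ref{TheoremEulerLagrangeEquation1}'' --- which implicitly means re-running the variational computation: rewrite $\mathfrak{L}^{\psi,\phi,m_0}$ using $(u,v)_{\psi,\phi,m_0}=(A^{-1}u,v)_{M_{m_0}^2(\mathbb{R}^d)}$ on $\mathrm{Dom}(A^{-1})$, read off the stationarity condition $A^{-1}f_0=(\lambda+1)f_0$, and repeat the eigenvalue-comparison computation (\ref{eqEulerLagrange12}) for $A^{-1}$. You instead deduce the statement directly from the already-proved Theorem \ref{TheoremEulerLagrangeEquation1}: since $Af_0=\tfrac{1}{\lambda+1}f_0$ with $\tfrac{1}{\lambda+1}>0$, you get $f_0=(\lambda+1)Af_0\in\mathrm{Ran}(A)=\mathrm{Dom}(A^{-1})$ and $A^{-1}f_0=(\lambda+1)f_0$, and then the order-reversing bijection $\mu\mapsto\mu^{-1}$ between the (all nonzero, by injectivity of $A$) eigenvalues of $A$ and those of $A^{-1}$ converts ``largest eigenvalue of $A$'' into ``smallest eigenvalue of $A^{-1}$.'' This is a legitimate and arguably cleaner route: it avoids repeating any Euler--Lagrange analysis and makes explicit the only two points that actually need checking, namely $f_0\in\mathrm{Dom}(A^{-1})$ and the eigenvalue correspondence, both of which you handle correctly using the injectivity of $A$ and Theorems \ref{TheoremSpectrum} and \ref{TheoremInverse}. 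No gaps.
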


\begin{proof}
The proof follows {\it mutatis mutandis} that of Theorem \ref{TheoremEulerLagrangeEquation1}.
\end{proof}

\begin{Example}\label{ExampleSchrodingerEq}
As a particular example, consider a $d=1$ system with $M_{m_0}^2 (\mathbb{R})=L^2 (\mathbb{R})$ ($m_0\equiv 1$), $\psi (x)$ and $\phi(\omega) = \omega$
admissible weights.

We thus have, for all $u \in Ran(A)$, $v \in \mathcal{B}^{\psi, \omega,1} (\mathbb{R})$:
\begin{equation}
\begin{array}{c}
\left(A^{-1}u,v \right)_{L^2 (\mathbb{R})}= \left(u,v \right)_{\psi, \omega,1}\\
\\
\Leftrightarrow \int_{\mathbb{R}}\left(A^{-1} u\right) (x) \overline{v(x)} dx = \int_{\mathbb{R}}\left(1+ | \psi(x)|^2  \right) u(x)\overline{v(x)}  dx +  \int_{\mathbb{R}}| \omega|^2 \widehat{u} (\omega) \overline{\widehat{v} (\omega)} d \omega \\
\\
\Leftrightarrow \int_{\mathbb{R}}\left(A^{-1} u\right) (x) \overline{v(x)} dx = \int_{\mathbb{R}}\left(1+ | \psi(x)|^2  \right) u(x)\overline{v(x)}  dx +  \int_{\mathbb{R}} \mathcal{D}u (x) \overline{\mathcal{D}v (x)} d x,
\end{array}
\label{eqExample2}
\end{equation}
where $\mathcal{D}$ denotes the distributional derivative.

 Since $\mathcal{S} (\mathbb{R} ) \subset \mathcal{B}^{\psi, \phi,1}
(\mathbb{R})$, we conclude from Theorem \ref{TheoremEulerLagrangeInverse} and (\ref{eqExample2}) that the minimizer $f_0$ satisfies:
\begin{equation}
- \frac{1}{4 \pi^2} \mathcal{D}^2 f_0 (x) + | \psi (x)|^2 f_0 (x) = \lambda f_0 (x),
\label{eqExample3}
\end{equation}
with $\lambda$ the smallest eigenvalue of $A^{-1} - I=- \frac{1}{4 \pi^2} \mathcal{D}^2+ | \psi (x)|^2$.

 If, in addition, $\psi (x)$ is such that $| \psi (x) |^2 \in C^2 (\mathbb{R})$, then a well known theorem shows that the distributional and classical solutions coincide, so that $f_0$ is a solution of the time-independent Schr\"odinger equation
 \begin{equation}
 - \frac{1}{4 \pi^2}  f_0^{\prime \prime} (x) + | \psi (x)|^2 f_0 (x) = \lambda f_0 (x).
 \label{eqExample4}
 \end{equation}
If $\psi (x) =x$, then all the aforementioned conditions are satisfied, and we obtain the Schr\"odinger equation for the simple harmonic oscillator:
\begin{equation}
- \frac{1}{4 \pi^2}  f_0^{\prime \prime} (x) + x^2 f_0 (x) = \lambda  f_0 (x).
\label{eqExample5}
\end{equation}
It is well known that a solution associated with the lowest eigenvalue is:
\begin{equation}
f_0 (x) = 2^{1/4 } e^{- \pi x^2},
\label{eqExample6}
\end{equation}
which corresponds to
\begin{equation}
\lambda= \frac{1}{2 \pi}.
\label{eqExample7}
\end{equation}
And we thus obtain the inequality
\begin{equation}
 \|x u \|_{L^2(\mathbb{R})}^2 + \| \omega \widehat{u} \|_{L^2(\mathbb{R})}^2 \geq \frac{1}{2 \pi}
\label{eqExample8}
\end{equation}
for all $u$ with $\|u\|_{L^2(\mathbb{R})}=1$, for which the left-hand side makes sense. This is equivalent to the standard Heisenberg uncertainty principle.

Notice that we have chosen to obtain the minimizer with the inverse operator $A^{-1}$ instead of the compact operator $A$. The latter alternative would have been much harder to solve \cite{Bongioanni,Cappiello}.

\end{Example}

\section*{Acknowledgements}

The work of N.C. Dias and J.N. Prata is supported by the Portuguese Science Foundation (FCT) grant PTDC/MAT-CAL/4334/2014.

\end{document}